\newtheorem{introques}{Question}
\newtheorem{introdefn}[introques]{Definition}
\newtheorem{introdethm}[introques]{Theorem}
\newtheorem{thm}{Theorem}[section]
\newtheorem{defn}[thm]{Definition}
\newtheorem{Ex}[thm]{Example} 
\newtheorem{lemma}[thm]{Lemma} 
\newtheorem{proposition}[thm]{Proposition} 
\newtheorem{remark}[thm]{Remark} 
\newtheorem{Cor}[thm]{Corollary}
\numberwithin{thm}{subsection}
\DeclareMathOperator{\GL}{GL}
\DeclareMathOperator{\id}{id}
\DeclareMathOperator{\Ext}{Ext}
\DeclareMathOperator{\Aut}{Aut}
\DeclareMathOperator{\gr}{gr}
\newcommand{\kk}{\Bbbk}
\newcommand{\uqba}{\underline{\operatorname{end}}}
\newcommand{\mc}{\mathcal}
\newcommand{\uaut}{\underline{\rm aut}}
\numberwithin{equation}{subsection}
\renewcommand\subsection{\@startsection{subsection}{2}
  \z@{-.5\linespacing\@plus-.7\linespacing}{.5\linespacing}
  {\bfseries}}
\title[Twisting of graded quantum groups and solutions to the quantum Yang-Baxter equation]{Twisting of graded quantum groups \\ and solutions to the quantum Yang-Baxter equation}  
\author[Huang]{Hongdi Huang}
\address{(Huang) Department of Mathematics, Rice University, Houston, TX 77005, U.S.A.}
\email{h237huan@rice.edu}
\author[Nguyen]{Van C. Nguyen}
\address{(Nguyen) Department of Mathematics, United States Naval Academy, Annapolis, MD 21402, U.S.A.}
\email{vnguyen@usna.edu}
\author[Ure]{Charlotte Ure}
\address{(Ure) Department of Mathematics, University of Virginia, Charlottesville, VA 22904, U.S.A.}
\email{cu9da@virginia.edu}
\author[Vashaw]{Kent B. Vashaw}
\address{(Vashaw) Department of Mathematics,
Massachusetts Institute of Technology,
Cambridge, MA 02139, U.S.A.}
\email{kentv@mit.edu}
\author[Veerapen]{Padmini Veerapen}
\address{(Veerapen) Department of Mathematics, Tennessee Tech University, Cookeville, TN 38505, U.S.A.}
\email{pveerapen@tntech.edu}
\author[Wang]{Xingting Wang}
\address{(Wang) Department of Mathematics, Howard University, 2400 6th St NW, Washington, DC 20059, U.S.A.}
\email{xingting.wang@howard.edu}
\date\today
\subjclass{
16S37, 
16S80, 
16T05, 
16W50, 
17B37 
}
\keywords{2-cocycle twist, Zhang twist, universal quantum group, quantum Yang-Baxter equation}
\begin{document}
\begin{abstract} 
Let $H$ be a Hopf algebra that is $\mathbb Z$-graded as an algebra. We provide sufficient conditions for a 2-cocycle twist of $H$ to be a Zhang twist of $H$. In particular, we introduce the notion of a twisting pair for $H$ such that the Zhang twist of $H$ by such a pair is a 2-cocycle twist. We use twisting pairs to describe twists of Manin's universal quantum groups associated to quadratic algebras and provide twisting of solutions to the quantum Yang-Baxter equation via the Faddeev-Reshetikhin-Takhtajan construction.
\end{abstract} 

\maketitle  

\section*{Introduction} 

In the theory of quantum groups, motivated by the tensor equivalence of two module categories, Drinfeld \cite{Dr87} introduced the notion of a Drinfeld twist which deforms the coalgebra structure of a Hopf algebra. In \cite{AEGN02}, Aljadeff, Etingof, Gelaki, and Nikshych discussed twists and properties of Hopf algebras invariant under Drinfeld twists. The dual version of the Drinfeld twist, called a 2-cocycle twist, was studied by Doi and Takeuchi \cite{Doi93,DT94}. This can be viewed as a deformation of the algebraic structure of a Hopf algebra and it yields a tensor equivalence of two corresponding comodule categories. 

Meanwhile, the notion of a twist of a graded algebra $A$ was introduced by Artin, Tate, and Van den Bergh in \cite{ATV1991} as a deformation of the original graded product of $A$; this definition was generalized by Zhang in \cite{Zhang1996}. One of the main applications of this twisting is that the graded module category of the twisted algebra is equivalent to that of the original algebra. Later, this twisting of a graded algebra became known as a Zhang twist, and ever since, it has played a pivotal role in various areas of noncommutative algebra and noncommutative projective geometry. For instance, many fundamental properties are preserved under Zhang twist, such as the noncommutative projective schemes, Gelfand-Kirillov dimension and global dimension, graded $\Ext$, Artin--Schelter regularity, and point modules \cite{Ro2016,Zhang1996}.

There are other notions of (cocycle) twists of noncommutative algebras (see e.g., \cite{Davies2017, M2005}). In this paper, we focus on the Zhang twists and 2-cocycle twists of $\mathbb Z$-graded algebras described above (see \Cref{sec:twists} for details). The comparison of these two twists and their corresponding algebraic structures is listed below. 
\begin{table}[h]
    \centering
    \begin{tabular}{c||c|c}
         & 2-cocycle twist &   Zhang twist \\
         \hline \hline
    algebra structure      & Hopf algebra &  $\mathbb Z$-graded algebra \\
    \hline
    twisting by  & 2-cocycle &  graded automorphism\\
    \hline 
    equivalence & tensor categories of comodules &   graded module categories\\
    \hline 
    examples & quantized coordinate rings &  Artin-Schelter regular algebras \\
    \end{tabular}
\end{table}

In practice, it is difficult to classify all 2-cocycles on a Hopf algebra and to understand the structures of their corresponding 2-cocycle twists. On the other hand, many ring-theoretic and homological properties are known to be preserved under Zhang twists due to the equivalence of their graded module categories. Here, we are interested in the following question relating Zhang twists to 2-cocycle twists of a graded Hopf algebra. 
\begin{introques}
\label{ques:twist}
When can a 2-cocycle twist of a Hopf algebra $H$ be given by a Zhang twist? 
\end{introques}

We partially answer these questions by providing sufficient conditions for a Zhang twist of a graded Hopf algebra to be a 2-cocycle twist, see \Cref{twist-and-2-cocycle}. To that end, we introduce the following twisting conditions on bialgebras.

\begin{introdefn}[Twisting Conditions]
\label{defn:twisting conditions}
A bialgebra $(B,m,u,\Delta,\varepsilon)$ satisfies the \emph{twisting conditions} if 
\begin{enumerate}
\item[(\textbf{T1})] as an algebra $B=\bigoplus_{n\in \mathbb{Z}} B_n$ is $\mathbb{Z}$-graded, and
\item[(\textbf{T2})] the comultiplication satisfies $\Delta(B_n)\subseteq B_n\otimes B_n$ for all $n\in \mathbb Z$. 
\end{enumerate}
\end{introdefn}

Equivalent descriptions of twisting conditions were discussed earlier in \cite[Lemma 1.3]{Bichon-Neshveyev-Yamashita2016}. Given these twisting conditions, the resulting twisted algebra can still be equipped with a bialgebra or Hopf algebra structure.
\begin{introdethm}[\Cref{zhang-twist-of-bialgebra}]
Let $B$ be a bialgebra (resp.~Hopf algebra) satisfying the twisting conditions (\textbf{T1})-(\textbf{T2}) in \Cref{defn:twisting conditions}. For any graded bialgebra (resp.~Hopf algebra) automorphism $\phi$ of $B$, the Zhang twist $B^{\phi}$ is again a bialgebra (resp.~Hopf algebra) satisfying the twisting conditions (\textbf{T1})-(\textbf{T2}). 
\end{introdethm}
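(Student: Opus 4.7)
The plan is to keep the coalgebra structure $(\Delta,\varepsilon)$ on $B^{\phi}$ unchanged as linear maps, and to verify its compatibility with the twisted multiplication $a *_{\phi} b := a\cdot \phi^{m}(b)$ for homogeneous $a\in B_{m}$. Conditions (T1) and (T2) for $B^{\phi}$ are immediate: (T1) follows since $\phi$ is graded and hence $a *_{\phi} b\in B_{m+n}$, while (T2) is inherited because $\Delta$ is unchanged as a map.

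For the bialgebra structure, coassociativity and counitality involve only the unchanged maps $\Delta$ and $\varepsilon$, so they pass over directly. The key check is that $\Delta\colon B^{\phi}\to B^{\phi}\otimes B^{\phi}$ remains an algebra homomorphism with respect to the twisted product. Fix homogeneous $a\in B_{m}$ and $b\in B_{n}$, and write $\Delta(a)=\sum a_{(1)}\otimes a_{(2)}$ and $\Delta(b)=\sum b_{(1)}\otimes b_{(2)}$; by (T2) the factors satisfy $a_{(1)},a_{(2)}\in B_{m}$ and $b_{(1)},b_{(2)}\in B_{n}$. Using that $\phi$ is a coalgebra map (equivalently, $\Delta\circ \phi^{m}=(\phi^{m}\otimes \phi^{m})\circ \Delta$), both $\Delta(a *_{\phi} b)$ and the componentwise twisted product $\Delta(a) *_{\phi}\Delta(b)$ evaluate to $\sum a_{(1)}\phi^{m}(b_{(1)})\otimes a_{(2)}\phi^{m}(b_{(2)})$. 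Compatibility of $\varepsilon$ with the twisted multiplication follows immediately from $\varepsilon\circ \phi=\varepsilon$.

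For the Hopf case, I will first establish the grading shift $S(B_{n})\subseteq B_{-n}$. For $a\in B_{n}$ the identity $\sum a_{(1)}S(a_{(2)})=\varepsilon(a)\cdot 1$ lies in $B_{0}$. Since each $a_{(1)}\in B_{n}$ by (T2), decomposing $S(a_{(2)})=\sum_{k}\pi_{k}(S(a_{(2)}))$ into graded pieces shows that only the $k=-n$ component contributes to $B_{0}$. Hence $\Xi|_{B_{n}}:=\pi_{-n}\circ S$ satisfies both antipode axioms, and uniqueness of antipodes forces $S=\Xi$, i.e.\ $S(B_{n})\subseteq B_{-n}$. Now define $S^{\phi}|_{B_{n}}:=\phi^{-n}\circ S$, which lands in $B_{-n}$. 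Since $S^{\phi}(a_{(1)})$ is homogeneous of degree $-n$, one computes
\[\sum S^{\phi}(a_{(1)}) *_{\phi} a_{(2)}=\sum \phi^{-n}\bigl(S(a_{(1)})\bigr)\cdot \phi^{-n}(a_{(2)})=\phi^{-n}\Bigl(\sum S(a_{(1)})\,a_{(2)}\Bigr)=\phi^{-n}\bigl(\varepsilon(a)\cdot 1\bigr)=\varepsilon(a)\cdot 1,\]
using that $\phi^{-n}$ is an algebra map; the mirror calculation handles the right antipode axiom.

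The main obstacle I anticipate is the grading shift $S(B_{n})\subseteq B_{-n}$ of the antipode: it is essential both for making the formula $S^{\phi}|_{B_{n}}=\phi^{-n}\circ S$ well-behaved with respect to the twisted multiplication and for the telescoping of $\phi^{-n}$ in the displayed computation. This fact does not follow directly from the twisting conditions themselves but requires the uniqueness-of-antipode argument sketched above; once it is in hand, every remaining verification reduces to direct bookkeeping with Sweedler notation.
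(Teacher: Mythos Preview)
Your proof is correct and follows the same overall strategy as the paper: keep $(\Delta,\varepsilon)$ unchanged, check they are algebra maps for the twisted product, define the twisted antipode by $S^{\phi}|_{B_n}=\phi^{-n}\circ S$, and verify the antipode identities using the grading shift $S(B_n)\subseteq B_{-n}$.

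The one genuine difference lies in how you establish that grading shift. The paper proves $S(B_n)\subseteq B_{-n}$ as a separate result (its Corollary on condition (\textbf{T3})), and does so indirectly: it first shows that the Hopf envelope of any bialgebra satisfying (\textbf{T1})--(\textbf{T2}) carries a grading with $S(\mathcal H(B)_n)\subseteq \mathcal H(B)_{-n}$, and then observes that a Hopf algebra is its own Hopf envelope. Your argument is more elementary and entirely self-contained: you project $S$ to its degree $-n$ component on $B_n$, observe that this projected map already satisfies both antipode identities (because the original identities live in $B_0$), and invoke uniqueness of the convolution inverse of $\id$. This avoids the Hopf envelope machinery altogether and is arguably the cleaner route; the paper's approach, by contrast, gives the grading on $\mathcal H(B)$ as a byproduct, which it needs elsewhere.
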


It is important to point out that any bialgebra satisfying the twisting conditions (\textbf{T1})-(\textbf{T2}) is only graded as an algebra but not as a coalgebra. 
In later sections, we will discuss various examples of bialgebras that satisfy the twisting conditions, among which are quotients of free algebras generated by the generators of matrix coalgebras. There are two main sources of such examples: one is Manin's universal bialgebra that universally coacts on a quadratic algebra \cite{Manin2018}, and the other one is the Faddeev-Reshetikhin-Takhtajan (FRT) construction from a solution to the quantum Yang-Baxter equation \cite{CMZ, FRT}. In \Cref{t1-t3-hopf-env}, we further show that the Hopf envelope $\mathcal H(B)$ of any bialgebra $B$ that satisfies the twisting conditions (\textbf{T1})-(\textbf{T2}) possesses the same twisting conditions (\textbf{T1})-(\textbf{T2}) and its antipode satisfies the additional twisting condition (\textbf{T3}) in \Cref{cor:T3}, that is $S(\mathcal H(B)_n) \subseteq \mathcal H(B)_{-n}$ for all $n \in \mathbb{Z}$.
As a consequence, all the aforementioned cases provide us with examples of $\mathbb Z$-graded Hopf algebras through their Hopf envelopes, where we are able to deform the Hopf structures by considering a suitable Zhang twist. 

Our next goal is to compare the notions of Zhang twists and 2-cocycle twists for a $\mathbb Z$-graded Hopf algebra that satisfies the twisting conditions (\textbf{T1})-(\textbf{T2}). Though generally it is difficult to detect whether a 2-cocycle twist is indeed a Zhang twist, we are able to give sufficient conditions for a Zhang twist by a twisting pair to be a 2-cocycle twist. We provide the following definition for an arbitrary bialgebra.

\begin{introdefn}[Twisting Pair]
\label{defn:twisting pair}
Let $(B,m,u,\Delta,\varepsilon)$ be a bialgebra. A pair $(\phi_1,\phi_2)$ of algebra automorphisms of $B$ is said to be a \emph{twisting pair} if the following conditions hold:
\begin{enumerate}
\item[(\textbf{P1})] $\Delta\circ \phi_1=(\id \otimes \phi_1)\circ \Delta$ and $\Delta\circ \phi_2=(\phi_2\otimes \id)\circ \Delta$, and  
 \item[(\textbf{P2})] $\varepsilon \circ (\phi_1 \circ \phi_2)=\varepsilon$.
\end{enumerate} 
\end{introdefn} 

We can understand these twisting pairs as generalizations of conjugate actions from groups to quantum groups in view of \Cref{lem:twistingAG} or pairs of right and left winding endomorphisms, in the terminology of \cite{BZ08}, given by algebra homomorphisms from $B$ to the base field (see \Cref{thm:twistingpair}). In particular, \Cref{lem:winding} shows that for a twisting pair $(\phi_1,\phi_2)$ of $B$, $\phi_1$ and $\phi_2$ are uniquely determined by each other as winding automorphisms.  When a bialgebra or a Hopf algebra satisfies the twisting conditions (\textbf{T1})-(\textbf{T2}), \Cref{remark:twisting} (2) asserts that its algebra grading is always preserved by any twisting pair. Therefore, we may consider the Zhang twist of a bialgebra or a Hopf algebra satisfying the twisting conditions by an arbitrary twisting pair. Next, note that for a bialgebra $B$, condition (\textbf{P1}) is to ensure that $\phi_1$ and $\phi_2$ are right and left $B$-comodule maps from $B$ to itself, where $B$ is viewed as a $B$-bicomodule over itself via its comultiplication map $\Delta: B\to B\otimes B$. \Cref{twist-and-cleft} shows that Zhang twists of a Hopf algebra by graded automorphisms satisfying (\textbf{P1}) will give us cleft objects over the original Hopf algebra. Indeed, we show the following.

\begin{introdethm}
[\Cref{twist-and-2-cocycle} and \Cref{thm:Zhang and 2-cocycle twists}]
Let $H$ be a Hopf algebra satisfying the twisting conditions (\textbf{T1})-(\textbf{T2}) in \Cref{defn:twisting conditions}. For any twisting pair $(\phi_1,\phi_2)$ of $H$, the Zhang twist $H^{\phi_1 \circ \phi_2}$ is isomorphic to a 2-cocycle twist $H^\sigma$, where the 2-cocycle $\sigma: H \otimes H \to \kk$ is explicitly given by $\phi_1$ and $\phi_2$.
\end{introdethm}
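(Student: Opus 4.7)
The plan is to construct the 2-cocycle $\sigma$ explicitly from the characters behind the twisting pair, and to verify that the resulting 2-cocycle twist $H^\sigma$ has literally the same multiplication as the Zhang twist $H^{\phi_1 \circ \phi_2}$ (so that the ``isomorphism'' is just the identity on the underlying vector space). The two key inputs are \Cref{thm:twistingpair}, which identifies twisting pairs with pairs of right/left winding automorphisms by the characters $\xi_i := \varepsilon \circ \phi_i$, and condition (\textbf{T2}), which guarantees that Sweedler components of a homogeneous element remain homogeneous of the same degree.

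First, I would record that (\textbf{P1}) yields $\phi_1(h) = \sum h_1 \xi_1(h_2)$ and $\phi_2(h) = \sum \xi_2(h_1) h_2$; condition (\textbf{P2}) translates into $\xi_2 * \xi_1 = \varepsilon$, hence also $\xi_1 * \xi_2 = \varepsilon$ in the convolution group of characters of $H$. A short calculation with coassociativity then shows that $\phi_1$ and $\phi_2$ commute. I would next define
\[
\sigma(a,b) := \varepsilon(a)\, \xi_2^{*|a|}(b), \qquad \sigma^{-1}(a,b) := \varepsilon(a)\, \xi_1^{*|a|}(b)
\]
for homogeneous $a \in H_{|a|}$ and arbitrary $b$, extended bilinearly to $H \otimes H$. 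A direct Sweedler computation, using that $\xi_1, \xi_2$ are algebra maps and that (\textbf{T2}) keeps all iterated coproducts of $a$ inside $H_{|a|}^{\otimes k}$, verifies the normalization, the convolution-inverse relation, and the 2-cocycle identity.

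The 2-cocycle twist product then computes, for homogeneous $a \in H_m$, as
\[
a \cdot_\sigma b = \sigma(a_1,b_1)\, a_2 b_2\, \sigma^{-1}(a_3,b_3) = a \cdot \bigl[\xi_2^{*m}(b_1)\, b_2\, \xi_1^{*m}(b_3)\bigr] = a \cdot \phi_2^m \phi_1^m(b) = a \cdot (\phi_1 \circ \phi_2)^m(b),
\]
where the first equality uses the counit identity $\varepsilon(a_1) a_2 \varepsilon(a_3) = a$, the second recognizes the bracketed expression as $\phi_2^m \phi_1^m(b)$ via the winding formulas together with coassociativity, and the last uses that $\phi_1, \phi_2$ commute. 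This is precisely the Zhang twist product, and since both deformations preserve the comultiplication of $H$ verbatim, we conclude $H^\sigma = H^{\phi_1 \circ \phi_2}$ as Hopf algebras.

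For the consequent statement, given a bialgebra $B$ satisfying (\textbf{T1})-(\textbf{T2}) with twisting pair $(\phi_1, \phi_2)$, \Cref{t1-t3-hopf-env} shows that the Hopf envelope $\mathcal H(B)$ satisfies (\textbf{T1})-(\textbf{T2}) as well. The characters $\xi_i$ extend to algebra maps $\mathcal H(B) \to \kk$, and their winding automorphisms assemble into a twisting pair of $\mathcal H(B)$ that extends the original one; applying the first part of the theorem to this lifted pair produces the desired 2-cocycle twist of $\mathcal H(B)$. The main obstacle throughout the argument is the bookkeeping in the 2-cocycle verification: the formula for $\sigma$ couples Sweedler calculus with the integer grading $|a|$, and (\textbf{T2}) must be invoked systematically to guarantee that $\xi_i^{*m}$ is applied to components of the intended degree; once this is done consistently, each of the required identities reduces to a short direct computation.
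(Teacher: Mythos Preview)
Your proposal is correct and follows essentially the same route as the paper. The only cosmetic difference is notation: you write the cocycle as $\sigma(a,b)=\varepsilon(a)\,\xi_2^{*|a|}(b)$ with $\xi_i=\varepsilon\circ\phi_i$, whereas the paper writes $\sigma(x,y)=\varepsilon(x)\,\varepsilon(\phi_2^{|x|}(y))$; since $\varepsilon\circ\phi_2^m=\xi_2^{*m}$ for a left winding automorphism, these are the same map, and the verification of the cocycle identity, the convolution inverse, and the equality $x*_\sigma y=x\,(\phi_1\circ\phi_2)^{|x|}(y)$ proceed identically in both treatments.
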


This result provides an alternate approach to \cite[Theorem 2.8 and Remark 2.9]{Bichon-Neshveyev-Yamashita2016}, which is formulated in the context of twists of graded categories. While \cite{Bichon-Neshveyev-Yamashita2016} does not use the language of Zhang twists, the connection is highlighted in \cite[Remark 2.4]{Bichon-Neshveyev-Yamashita2018}. 

An advantage of using twisting pairs here is that these pairs are straightforward to compute when the Hopf algebras are presented by explicit generators and relations (see \Cref{thm:twistingpair}). Moreover, any twisting pair of a graded Hopf algebra $H$ gives rise to a 2-cocycle of $H$ through a simple formula. Since Zhang twists preserve graded module categories, using our results, we may deduce that many algebraic properties are maintained under 2-cocycle twists given by these twisting pairs. So for the remainder of the paper, we focus on computing possible twisting pairs for various Hopf algebras that satisfy the twisting conditions (\textbf{T1})-(\textbf{T2}).

Our first class of examples is the family of universal quantum groups from Manin's discussion of the possibility of ``hidden symmetry" in noncommutative projective algebraic geometry (\Cref{sec:Manin}). In \cite{Manin2018}, Manin showed that certain universal quantum groups can coact on the homogeneous coordinate ring of an (embedded) projective variety, and these quantum groups are typically much larger than the honest automorphism groups of the variety. Recently, these universal quantum groups have been extensively studied from the point of view of noncommutative invariant theory \cite{BDV13, Chirvasitu-Walton-Wang2019,Dubois-Violette2005,WW2016} and their corepresentation theories are investigated through the Tannaka-Krein formalism \cite{vdb2017}. Our result is as follows.

\begin{introdethm}[\Cref{thm:twist-aut} and \Cref{2cocycle-twist}]
Every twisting pair of the universal quantum group $\uaut(A)$ associated to a quadratic algebra $A$ is given by a graded algebra automorphism of $A$ in an explicit way. As a consequence, the corepresentation theory of $\uaut(A)$ only depends on the graded module cateogory over $A$. 
\end{introdethm}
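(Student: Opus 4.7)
The plan is to reduce twisting pairs of $\uaut(A)$ to algebra characters of $\uaut(A)$ and then identify these characters with graded algebra automorphisms of $A$ via Manin's universal property.

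First, by \Cref{lem:winding}, any pair $(\phi_1,\phi_2)$ of algebra automorphisms of $H := \uaut(A)$ satisfying \textbf{(P1)} has the form $\phi_1 = (\id \otimes \chi_1) \circ \Delta$ and $\phi_2 = (\chi_2 \otimes \id) \circ \Delta$ for unique algebra characters $\chi_1,\chi_2 : H \to \kk$. A short Sweedler-notation computation then shows that \textbf{(P2)} is equivalent to the convolution identity $\chi_2 \ast \chi_1 = \varepsilon$ in $H^*$. Thus the twisting pair is encoded in a single convolution-invertible character $\chi$ of $H$, with $\chi_2 = \chi$ and $\chi_1 = \chi^{-1}$ (up to interchange).

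The central step is to invoke the universal property of $\uaut(A)$, which characterizes $H$ as the universal Hopf algebra coacting on $A$ from the right while preserving the grading and the degree-one subspace $V = A_1$. An algebra character $\chi : H \to \kk$ is determined by its values $\chi(z_{ij})$ on the matrix coalgebra generators $z_{ij}$ (with $\Delta(z_{ij}) = \sum_k z_{ik} \otimes z_{kj}$), and the defining relations of $\uaut(A)$ force the resulting matrix to encode a graded algebra endomorphism of $A$ that extends its action on $V$. Convolution of characters corresponds to matrix multiplication, and hence to composition of endomorphisms, so convolution-invertibility of $\chi$ matches invertibility of the associated endomorphism of $A$. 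This yields a bijection between twisting pairs of $\uaut(A)$ and graded algebra automorphisms $\psi$ of $A$, and reading off the induced formulas for $\phi_1(z_{ij})$ and $\phi_2(z_{ij})$ gives the explicit description in the theorem. The main technical obstacle is the careful unpacking of Manin's relations and the verification that convolution on characters corresponds to composition of automorphisms, which closely parallels the computation carried out in \Cref{thm:twistingpair}.

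For the corollary on corepresentation theory, suppose $A$ and $A'$ are quadratic algebras with equivalent graded module categories. By Zhang \cite{Zhang1996}, $A' \cong A^\tau$ for some twisting system $\tau$, and in the setting of finitely generated quadratic algebras one may reduce to the case where $\tau$ arises from a graded algebra automorphism $\psi$ of $A$. By the first part of the theorem, $\psi$ determines a twisting pair $(\phi_1,\phi_2)$ of $\uaut(A)$, and a universality argument identifies the Zhang twist $\uaut(A)^{\phi_1 \circ \phi_2}$ with $\uaut(A')$. Applying \Cref{thm:Zhang and 2-cocycle twists}, this Zhang twist is realized as a 2-cocycle twist, so the comodule categories of $\uaut(A)$ and $\uaut(A')$ are tensor equivalent, establishing the claim.
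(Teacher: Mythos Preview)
Your first paragraph is essentially the paper's approach (\Cref{thm:twistingpair} together with \Cref{Manin-twisting-pairs} and \Cref{cor:lifting}): twisting pairs are pairs of winding automorphisms coming from convolution-inverse characters, and the key computation is that a character of $\uend^l(A)=A^!\bullet A$ restricted to degree one has the form $\id\otimes\phi$ on $A_1^*\otimes A_1$ with $(\phi\otimes\phi)R(A)=R(A)$, so $\phi$ extends to a graded automorphism of $A$. Your sketch is correct here, though the paper carries this out explicitly on the bullet-product presentation rather than appealing to an abstract universal property.

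There are two genuine gaps in your second part. First, the identification $\uaut(A^\psi)\cong\uaut(A)^{\phi_1\circ\phi_2}$ is the substantive content of \Cref{thm:twist-aut}, and ``a universality argument'' does not capture what is actually needed. The paper proves this at the level of $\uend$ by computing with bullet products: one shows $(A^\psi)^!\cong (A^!)^{(\psi^{-1})^!}$ (\Cref{lemma:twist-of-dual}) and $A^\phi\bullet B^\psi\cong (A\bullet B)^{\phi\bullet\psi}$ (\Cref{bullet-twist}), which together give $\uend^l(A^\psi)\cong\uend^l(A)^{\uend^l(\psi)\circ\uend^r((\psi^{-1})^!)}$; then one lifts to the Hopf envelope via \Cref{f-iso}. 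There is no obvious way to see directly from universality that the Zhang-twisted Hopf algebra $\uaut(A)^{\phi_1\circ\phi_2}$ satisfies the universal property for coactions on $A^\psi$, so this step needs the explicit calculation.

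Second, your reduction ``in the setting of finitely generated quadratic algebras one may reduce to the case where $\tau$ arises from a graded algebra automorphism'' is not justified and is in general false: Zhang's theorem gives a twisting \emph{system}, and not every twisting system on a $\mathbb Z$-graded algebra comes from a single graded automorphism (cf.\ the discussion around \cite[Proposition 4.2]{Sierra2011}). The paper's theorem, as stated precisely in \Cref{thm:twist-aut} and \Cref{2cocycle-twist}, only treats Zhang twists by a graded automorphism $\phi$; the informal ``only depends on the graded module category'' in the introduction should be read in that restricted sense, not as a statement about arbitrary graded-Morita-equivalent quadratic algebras.
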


Our next focus is the twisting of solutions to the quantum Yang-Baxter equation (QYBE) (see e.g., \cite{Dr87,T88}). The quantum Yang-Baxter equation originates in statistical mechanics. Solutions to this equation are building blocks for many invariants of knots, links and 3-manifolds. We provide a strategy to twist a solution $R$ to the QYBE by applying twisting pairs of the universal bialgebra $A(R)$ obtained via the FRT construction. We show that $A(R)$ is indeed a quadratic bialgebra that satisfies the twisting conditions (\textbf{T1})-(\textbf{T2}). Moreover, we are able to classify all twisting pairs of $A(R)$ and lift these twisting pairs to those of the Hopf envelope $\mathcal H(A(R))$. It turns out that all such twisting pairs can be determined explicitly in terms of the matrix coalgebra generators of $A(R)$ (see \Cref{Af-twisting-pair}). By writing out the 2-cocycles on $\mathcal H(A(R))$ explicitly using these twisting pairs, we then are able to twist solutions to the quantum Yang-Baxter equation (see \Cref{cor:TwistR}). In the special case when $R$ is given by the classical Yang-Baxter operator $R_q$ for some nonzero scalar $q$, the universal bialgebra $A(R_q)$ is the quantized coordinate ring of the matrix algebra $\mathcal O_q(M_n(\kk)),$ and its Hopf envelope is the quantized coordinate ring of the general linear group $\mathcal O_q(\GL_n(\kk))$. We describe all the twisted solutions of $R_q$ as follows.

\begin{introdethm}[\Cref{prop:twistR}]
Let $V$ be a finite-dimensional vector space over $\kk$ with basis $\{v_1,\ldots,v_n\}$ and let $R_q\in \textnormal{End}_\kk(V^{\otimes 2})$ be the classical Yang-Baxter operator. Denote by $\sigma$ the 2-cocycle given by some twisting pair of $A(R)$ and let $(R_q)^\sigma$ be the twisted solution. Then we have
\begin{enumerate}
    \item If $q=1$, then $(R_q)^\sigma(v_i\otimes v_j)=\sum_{1\leq k,l\leq n}\alpha^k_i\beta^l_jv_k\otimes v_l$.
    \item If $q=-1$, then $(R_q)^\sigma(v_i\otimes v_j)=\sum_{1\leq k,l\leq n}(-1)^{\delta_j^k}\,\delta^k_{\tau^{-1}(i)}\delta^l_{\tau(j)}\alpha^{\tau^{-1}(i)}_i(\alpha_{\tau(j)}^j)^{-1}v_k\otimes v_l$.

\item If $q\ne \pm1$, then 
    \begin{equation*}
(R_q)^\sigma(v_i\otimes v_j)=\left\{\begin{aligned}
&\sum_{1\leq k,l\leq n}\delta^{i}_{k}\delta^{j}_{l}\,\alpha_i^i\,(\alpha_l^l)^{-1}v_k\otimes v_l, & \textnormal{if}\quad  i<j\\
&\sum_{1\leq k,l\leq n}\delta^{i}_{k}\delta^{j}_{l}\,\alpha_i^i\,(\alpha_l^l)^{-1}\,qv_k\otimes v_l,
& \textnormal{if}\quad  i=j\\
&\sum_{1\leq k,l\leq n}\left(\delta^{i}_{k}\delta^{j}_{l}+\delta^{i}_{l}\delta_{j}^{k}(q-q^{-1})\right)\alpha_i^i\,(\alpha_l^l)^{-1}v_j\otimes v_l, 
& \quad \textnormal{if}\quad i>j,
\end{aligned}
\right.
\end{equation*}
\end{enumerate}
where $\delta_{j}^{i}$ is the Kronecker delta, and $(\alpha^i_j)$ and $(\beta^i_j)$ are $n\times n$ matrices inverse to each other. When $q=-1$, $(\alpha^i_j)$ is a generalized permutation matrix such that $\alpha^{i}_j\neq 0$ whenever $j=\tau(i)$ for some fixed $\tau\in S_n$, When $q\neq \pm 1$, $(a^i_j)$ is a diagonal matrix. 
\end{introdethm}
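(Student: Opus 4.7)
The plan is to combine the classification of twisting pairs of $A(R_q)$ from \Cref{Af-twisting-pair} with the explicit 2-cocycle formula supplied by \Cref{thm:Zhang and 2-cocycle twists}, and then read off the twisted operator directly from the twisted FRT relations. Let $T=(t^i_j)$ denote the matrix of coalgebra generators of $A(R_q)$, so that the defining FRT relations take the compact form $R_q\,T_1 T_2 = T_2 T_1\, R_q$. By universality of the FRT construction together with \Cref{cor:TwistR}, any such 2-cocycle twist of $\mathcal{H}(A(R_q)) = \mathcal{O}_q(\GL_n(\kk))$ is itself the Hopf envelope of an FRT bialgebra for a uniquely determined operator $R_q^\sigma$, and it is this operator that we must compute in each of the three regimes of $q$.

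First I would instantiate the classification in \Cref{Af-twisting-pair} case by case. On the generators, any twisting pair has the form $\phi_1(t^i_j)=\sum_k\alpha^i_k\, t^k_j$ and $\phi_2(t^i_j)=\sum_k\beta^k_j\, t^i_k$, where condition (\textbf{P2}) forces $(\alpha^i_j)$ and $(\beta^i_j)$ to be mutually inverse and where compatibility of (\textbf{P1}) with the quadratic FRT relations enforces the shape restrictions listed in the theorem: arbitrary invertible matrices for $q=1$ (reflecting the commutativity of $A(R_1)$), generalized permutation matrices along a common permutation $\tau\in S_n$ for $q=-1$, and diagonal matrices for generic $q\neq\pm 1$. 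By \Cref{thm:Zhang and 2-cocycle twists}, the induced 2-cocycle $\sigma$ on $\mathcal{H}(A(R_q))$ is fully determined on matrix coalgebra generators by products of entries of $\alpha$ and $\beta$, with convolution inverse $\sigma^{-1}$ of the analogous separable shape obtained by swapping $\alpha\leftrightarrow\beta$.

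Next I would apply the twisted product formula $a\cdot_\sigma b=\sum\sigma(a_{(1)},b_{(1)})\,a_{(2)}b_{(2)}\,\sigma^{-1}(a_{(3)},b_{(3)})$ to the degree-two elements $t^i_j t^k_l$, exploiting the fact that the matrix coalgebra comultiplication $\Delta(t^i_j)=\sum_p t^i_p\otimes t^p_j$ makes every evaluation of $\sigma$ or $\sigma^{-1}$ collapse into entries of $\alpha$ and $\beta$. Pushing the twisted FRT relation $R_q^\sigma T_1 T_2 = T_2 T_1 R_q^\sigma$ through this computation identifies $R_q^\sigma$ as the result of conjugating the two tensor slots of $R_q$ by the matrices $\alpha$ and $\alpha^{-1}=\beta$. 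Specializing: in part (1), the commutativity of $A(R_1)$ lets $\alpha$ and $\beta$ act independently on the two tensor factors, yielding the fully separable formula; in part (2), the generalized permutation shape forces $k=\tau^{-1}(i)$ and $l=\tau(j)$ in the nonzero output, while the anticommutation relations of $A(R_{-1})$ produce the sign $(-1)^{\delta^k_j}$; in part (3), the diagonal $(\alpha^i_i)$ merely rescales each entry of $R_q$ by $\alpha^i_i(\alpha^l_l)^{-1}$, preserving the combinatorial shape of $R_q$.

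The main obstacle will be the $q=-1$ case: the interaction between the generalized permutation matrix and the exterior-algebra-like relations of $A(R_{-1})$ generates a collection of sign factors that must be tracked carefully to verify that only $(-1)^{\delta^k_j}$ survives and all cross terms cancel. A secondary technical point is to confirm that the diagonal shape in part (3) is actually forced (and not merely sufficient) by \Cref{Af-twisting-pair} when $q\neq\pm 1$, which reduces to showing that the generic FRT relations admit no nontrivial symmetries beyond diagonal scaling. With these points established, the explicit formulas in (1)--(3) follow by finite computations.
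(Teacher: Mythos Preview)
Your approach is correct in spirit but takes an unnecessarily circuitous route. You invoke \Cref{cor:TwistR} only to assert that $R_q^\sigma$ exists and solves the QYBE, then propose to recover its entries by computing the twisted products $t^i_j\cdot_\sigma t^k_l$ and reading off the new FRT relations. The paper instead uses the explicit entrywise formula already supplied by \Cref{cor:TwistR}, namely
\[
(R^\sigma)^{kl}_{ij}=\sum_{1\le p,v\le n}\alpha^p_i\,(R_q)^{kv}_{pj}\,\beta^l_v,
\]
and simply specializes it in each regime of $q$. This collapses every case to a one-line computation: for $q=-1$ one has $q-q^{-1}=0$, so $(R_q)^{kv}_{pj}=(-1)^{\delta^p_j}\delta^p_k\delta^j_v$ and the double sum evaluates immediately to $(-1)^{\delta^k_j}\alpha^k_i\beta^l_j$, with no delicate sign tracking or cancellation of cross terms required. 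The ``main obstacle'' you anticipate in the $q=-1$ case is thus entirely an artifact of the longer path you chose through the twisted multiplication.

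Your ``secondary technical point''---verifying that $(\alpha^i_j)$ is forced to be diagonal for $q\neq\pm 1$ and a generalized permutation matrix for $q=-1$---is likewise not part of the proof of this proposition. Those shape constraints are established separately in \Cref{quantizedGL-twisting-pair} (by inspecting which scalar matrices satisfy the relations of $\mathcal O_q(M_n(\kk))$) and are taken as input here. The proof of the present statement is then nothing more than substituting those constrained matrices into the formula from \Cref{cor:TwistR}.
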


\noindent
{\bf Acknowledgements.} The authors thank Chelsea Walton for useful discussions that inspired this paper. The authors also thank the referees for their insightful comments. Nguyen was supported by the Naval Academy Research Council. Vashaw was partially supported by an Arthur K. Barton Superior Graduate Student Scholarship in Mathematics from Louisiana State University, NSF grants DMS-1901830 and DMS-2131243, and NSF Postdoctoral Fellowship DMS-2103272. Wang was partially supported by Simons collaboration grant \#688403 and AFOSR grant FA9550-22-1-0272. Part of this research work was done during Wang's visit to the Department of Mathematics at Rice University in October 2021. He is grateful for the first author’s invitation and wishes to thank Rice University for its hospitality.

\section{Zhang twists and 2-cocycle twists of Hopf algebras}
\label{sec:twists}

Throughout the paper, let $\kk$ be a base field with tensor product $\otimes$ taken over $\kk$ unless stated otherwise. All algebras are associative over $\kk$. We use the Sweedler notation for the comultiplication in a coalgebra $B$ such that $\Delta(h) = \sum h_1 \otimes h_2$ for any $h \in B$. 

We begin this section by proving fundamental properties of twisting pairs of a bialgebra as defined in \Cref{defn:twisting pair} and proving that they give rise to the twisting pairs of the corresponding Hopf envelope (\Cref{cor:lifting}). We then show that for a bialgebra or a Hopf algebra that satisfies our twisting conditions, its Zhang twist is again a bialgebra or a Hopf algebra (\Cref{zhang-twist-of-bialgebra}). 
In particular, we show that the construction of Hopf envelopes and that of Zhang twists are compatible (\Cref{f-iso}). Finally, we connect the notion of a Zhang twist of a graded Hopf algebra with that of a $2$-cocycle twist (\Cref{twist-and-2-cocycle}).

\subsection{Twisting pairs and liftings to Hopf envelopes}

We provide some basic properties of twisting pairs introduced in \Cref{defn:twisting pair}. As a consequence of \Cref{lem:twisting pair prop}, for the rest of the paper, whenever we refer to a twisting pair, it satisfies conditions (\textbf{P1})-(\textbf{P4}).

\begin{lemma}
\label{lem:twistpairinverse}
For a twisting pair $(\phi_1, \phi_2)$, the inverse pair $(\phi_1^{-1}, \phi_2^{-1})$ is again a twisting pair.
\end{lemma}
\begin{proof}
For a twisting pair $(\phi_1, \phi_2)$, it is straightforward to verify
\[  (\id\otimes \phi_1^{-1})\circ \Delta\overset{({\bf P1)}}{=}\Delta\circ \phi_1^{-1}, \quad (\phi_2^{-1}\otimes \id)\circ \Delta\overset{({\bf P1)}}{=}\Delta \circ \phi_2^{-1}, 
\text{ and }
\varepsilon \circ (\phi_1^{-1}\circ \phi_2^{-1}) \overset{({\bf P2)}}{=}\varepsilon. \]
Hence, the inverse pair $(\phi_1^{-1}, \phi_2^{-1})$ is again a twisting pair.
\end{proof}

\begin{lemma}\label{lem:twisting pair prop}
Let $(B,m,u,\Delta,\varepsilon)$ be a bialgebra. For any twisting pair $(\phi_1,\phi_2)$ of $B$, we have the following properties:
\begin{enumerate}
    \item[(\textbf{P3})] $\phi_1\circ \phi_2=\phi_2\circ \phi_1$, and
    \item[(\textbf{P4})] $(\phi_1\otimes \phi_2)\circ \Delta=\Delta$.
\end{enumerate}
\end{lemma}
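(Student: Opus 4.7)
The plan is to first recast $\phi_1$ and $\phi_2$ as right and left winding automorphisms. Set $\chi_i := \varepsilon \circ \phi_i$ for $i=1,2$; since each $\phi_i$ is an algebra automorphism and $\varepsilon$ is an algebra map, each $\chi_i : B \to \kk$ is a character. Applying $\id \otimes \varepsilon$ to the first identity in (\textbf{P1}) and $\varepsilon \otimes \id$ to the second, and using the counit axioms, yields the winding formulas
\[
\phi_1(h) = \sum h_{(1)} \chi_1(h_{(2)}), \qquad \phi_2(h) = \sum \chi_2(h_{(1)}) h_{(2)}.
\]

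For (\textbf{P3}), I would expand both compositions directly. Using the winding formulas together with coassociativity,
\[
(\phi_1 \circ \phi_2)(h) = \sum \chi_2(h_{(1)}) \phi_1(h_{(2)}) = \sum \chi_2(h_{(1)}) h_{(2)} \chi_1(h_{(3)}),
\]
and the same expression comes out for $(\phi_2 \circ \phi_1)(h)$. So (\textbf{P3}) falls out of coassociativity alone, without invoking (\textbf{P2}).

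For (\textbf{P4}), the core identity I would aim for is the convolution equality $\chi_1 * \chi_2 = \varepsilon$. Applying $\varepsilon$ to the formula for $\phi_1 \phi_2(h)$ obtained above and invoking (\textbf{P2}) gives only $\chi_2 * \chi_1 = \varepsilon$, so the key obstacle is upgrading this to $\chi_1 * \chi_2 = \varepsilon$, which is not automatic in a bialgebra since characters under convolution form only a monoid. Here I would use that $\phi_1$ is invertible: the same argument used for $\phi_1$ shows that $\phi_1^{-1}$ is again a right $B$-comodule map, and hence a right winding by $\chi_1' := \varepsilon \circ \phi_1^{-1}$. Translating $\phi_1 \circ \phi_1^{-1} = \phi_1^{-1} \circ \phi_1 = \id$ via the winding formulas gives $\chi_1 * \chi_1' = \chi_1' * \chi_1 = \varepsilon$, so $\chi_1'$ is a two-sided convolution inverse of $\chi_1$. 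Right-convolving $\chi_2 * \chi_1 = \varepsilon$ by $\chi_1'$ then forces $\chi_2 = \chi_1'$, and therefore $\chi_1 * \chi_2 = \varepsilon$.

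With this identity in hand, (\textbf{P4}) is a direct computation. Expanding via the winding formulas and applying coassociativity to the iterated coproduct,
\[
(\phi_1 \otimes \phi_2)\Delta(h) = \sum \chi_1(h_{(2)(1)}) \chi_2(h_{(2)(2)})\, h_{(1)} \otimes h_{(3)} = \sum (\chi_1 * \chi_2)(h_{(2)})\, h_{(1)} \otimes h_{(3)},
\]
which reduces to $\sum \varepsilon(h_{(2)})\, h_{(1)} \otimes h_{(3)} = \Delta(h)$ by the counit axiom applied to $(\id \otimes \Delta)\Delta(h)$. The main subtlety, as indicated, is the asymmetry of (\textbf{P2}) in the order of $\phi_1, \phi_2$, and it is precisely the algebra\emph{-automorphism} hypothesis (rather than just endomorphism) that lets this asymmetry be resolved via the convolution inverse.
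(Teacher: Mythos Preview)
Your proof is correct, and its overall architecture matches the paper's: both recast $\phi_1,\phi_2$ as winding maps via the characters $\chi_i=\varepsilon\circ\phi_i$ (the paper's equation labeled \eqref{eq:P4} is exactly your winding formula), obtain (\textbf{P3}) from coassociativity, and reduce (\textbf{P4}) to the convolution identity $\chi_1*\chi_2=\varepsilon$.

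The one genuine difference is how that convolution identity is obtained. The paper inserts $\phi_2\phi_2^{-1}$ and $\phi_1\phi_1^{-1}$ to rewrite $\chi_1*\chi_2$ as $(\varepsilon\circ\phi_1\phi_2)\otimes(\varepsilon\circ\phi_1\phi_2)$ applied to $(\phi_2^{-1}\otimes\phi_1^{-1})\Delta$, then collapses this using (\textbf{P1}) for the inverses, (\textbf{P3}), and (\textbf{P2}). You instead argue directly in the convolution monoid of characters: $\phi_1^{-1}$ is again a right winding, so $\chi_1$ has a two-sided convolution inverse $\chi_1'=\varepsilon\circ\phi_1^{-1}$; since (\textbf{P2}) gives $\chi_2*\chi_1=\varepsilon$, associativity forces $\chi_2=\chi_1'$ and hence $\chi_1*\chi_2=\varepsilon$. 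Your route is a bit more conceptual and isolates exactly where the \emph{automorphism} (as opposed to endomorphism) hypothesis enters, while the paper's manipulation is self-contained and avoids introducing $\chi_1'$ as a separate object. Either way the invertibility of the $\phi_i$ is essential, and the final computation of $(\phi_1\otimes\phi_2)\Delta$ is the same in both.
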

\begin{proof}
We will use the properties (\textbf{P1})-(\textbf{P2})
to prove (\textbf{P3})-(\textbf{P4}).
For (\textbf{P3}), we have
\begin{align*}
  \phi_1\circ \phi_2&=(\varepsilon\otimes \id)\circ \Delta\circ \phi_1\circ \phi_2\overset{({\bf P1})}{=}(\varepsilon\otimes \id)\circ(\id \otimes \phi_1)\circ \Delta\circ \phi_2\overset{({\bf P1})}{=}(\varepsilon\otimes \id)\circ(\id\otimes \phi_1)\circ (\phi_2 \otimes\id)\circ \Delta\\
  &=(\varepsilon\otimes \id)\circ(\phi_2\otimes \id)\circ (\id\otimes \phi_1)\circ \Delta \overset{({\bf P1})}{=} (\varepsilon\otimes \id)\circ(\phi_2\otimes \id)\circ \Delta\circ \phi_1 \overset{({\bf P1})}{=} (\varepsilon\otimes \id)\circ\Delta\circ \phi_2\circ \phi_1\\
  &=\phi_2\circ \phi_1. 
\end{align*}
For (\textbf{P4}), we first remark that $\phi_1=(\id \otimes \varepsilon)\circ \Delta\circ \phi_1 \overset{({\bf P1})}{=} (\id \otimes \varepsilon)\circ (\id \otimes \phi_1)\circ \Delta =(\id \otimes (\varepsilon\circ  \phi_1))\circ \Delta$. Hence, 
\begin{align}
\phi_1 = (\id \otimes (\varepsilon\circ  \phi_1))\circ \Delta. \label{eq:P4}
\end{align}
Similarly, we have $\phi_2=((\varepsilon\circ \phi_2)\otimes \id)\circ \Delta$, so that
\begin{align*} 
    \left((\varepsilon\circ \phi_1)\otimes (\varepsilon\circ \phi_2)\right)\circ \Delta &=\left((\varepsilon\circ \phi_1
\circ \phi_2)\otimes (\varepsilon\circ \phi_2
\circ \phi_1)\right)\circ (\phi_2^{-1}\otimes \phi_1^{-1})\circ \Delta \\
 \overset{({\bf P3})}&=\left((\varepsilon\circ \phi_1
\circ \phi_2)\otimes (\varepsilon\circ \phi_1
\circ \phi_2)\right)\circ (\phi_2^{-1}\otimes \phi_1^{-1})\circ \Delta \\ 
\overset{({\bf P1}),\; ({\bf P2})}&{=}(\varepsilon\otimes \varepsilon)\circ \Delta \circ \phi_1^{-1}\circ \phi_2^{-1}=
\varepsilon\circ \phi_1^{-1}\circ \phi_2^{-1} 
\overset{({\bf P2)}}{=} \varepsilon.
\end{align*}
Therefore,
\begin{align}
    \left((\varepsilon\circ \phi_1)\otimes (\varepsilon\circ \phi_2)\right)\circ \Delta & = \varepsilon.\label{eq:1.1.3}
\end{align}
Hence,
\begin{align*}
    (\phi_1\otimes \phi_2)\circ \Delta \overset{(\ref{eq:P4})}&=\left(\id \otimes(\varepsilon\circ \phi_1)\otimes (\varepsilon\circ \phi_2)\otimes \id \right)\circ (\Delta\otimes \Delta)\circ \Delta \\
    \overset{{\rm Coassociativity}}&{=}\left(\id \otimes(\varepsilon\circ \phi_1)\otimes (\varepsilon\circ \phi_2)\otimes \id \right)\circ (\id \otimes \Delta\otimes \id)\circ (\Delta\otimes \id)\circ  \Delta\\
    \overset{(\ref{eq:1.1.3})}&=(\id\otimes \varepsilon\otimes \id)\circ (\Delta\otimes \id)\circ  \Delta =\Delta. \qedhere
\end{align*}
\end{proof}

\begin{Cor}\label{eq:P1inverse}
The set of all twisting pairs of a bialgebra has a group structure under component-wise multiplication and inverse.
\end{Cor}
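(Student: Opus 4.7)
The plan is a routine verification of the group axioms, with one commutativity lemma carrying the conceptual weight. Associativity of component-wise composition is inherited from composition of functions, and $(\id_B, \id_B)$ trivially satisfies (\textbf{P1})--(\textbf{P2}), so it is the identity element. It remains to check closure and the existence of inverses.

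The technical ingredient I would establish first is the following commutativity lemma: if $\alpha, \beta$ are algebra endomorphisms of $B$ satisfying $\Delta\circ\alpha = (\id\otimes \alpha)\circ\Delta$ and $\Delta\circ\beta = (\beta\otimes\id)\circ\Delta$ respectively, then $\alpha\circ\beta = \beta\circ\alpha$. The proof mirrors the manipulation in (\ref{eq:P4}) from the preceding lemma: write $\alpha = (\id \otimes (\varepsilon\circ\alpha))\circ\Delta$ and $\beta = ((\varepsilon\circ\beta)\otimes \id)\circ\Delta$, then apply coassociativity to see that both compositions send $b$ to $\sum (\varepsilon\circ\beta)(b_1)\, b_2\, (\varepsilon\circ\alpha)(b_3)$.

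For closure, given twisting pairs $(\phi_1, \phi_2)$ and $(\psi_1, \psi_2)$, I claim $(\phi_1\circ\psi_1,\, \phi_2\circ\psi_2)$ is again a twisting pair. Property (\textbf{P1}) follows by iterating (\textbf{P1}) for each factor. For (\textbf{P2}), the commutativity lemma applied to the ``right-type'' $\psi_1$ and the ``left-type'' $\phi_2$ yields $\psi_1\circ\phi_2 = \phi_2\circ\psi_1$, so
\[
\varepsilon\circ\phi_1\circ\psi_1\circ\phi_2\circ\psi_2 = \varepsilon\circ\phi_1\circ\phi_2\circ\psi_1\circ\psi_2 = \varepsilon\circ\psi_1\circ\psi_2 = \varepsilon,
\]
using (\textbf{P2}) for each original pair. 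For the inverse, $(\phi_1^{-1}, \phi_2^{-1})$ is again a twisting pair: each $\phi_i^{-1}$ is an algebra automorphism, inverting both sides of (\textbf{P1}) for $\phi_i$ yields the analogous identity for $\phi_i^{-1}$, and (\textbf{P3}) from \Cref{lem:twisting pair prop} gives $(\phi_1\circ\phi_2)^{-1} = \phi_1^{-1}\circ\phi_2^{-1}$, so precomposing $\varepsilon = \varepsilon\circ\phi_1\circ\phi_2$ with this inverse produces $\varepsilon\circ\phi_1^{-1}\circ\phi_2^{-1} = \varepsilon$.

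The only step carrying any content beyond the properties already proven in \Cref{lem:twisting pair prop} is the commutativity lemma; everything else is bookkeeping. Conceptually it encodes the principle that right-winding and left-winding endomorphisms (in the sense discussed after \Cref{defn:twisting pair}) always commute with each other, which is precisely what lets the cross-terms between two twisting pairs slide past each other under composition.
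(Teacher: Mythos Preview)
Your proof is correct and follows essentially the same approach as the paper: iterate (\textbf{P1}) for closure, invert it for inverses, and handle (\textbf{P2}) via the commutation $\psi_1\circ\phi_2=\phi_2\circ\psi_1$ (the paper phrases this as ``follow the argument for (\textbf{P3})'' rather than isolating it as a standalone lemma). Your formulation of the commutativity step via the winding-endomorphism identities $\alpha=(\id\otimes(\varepsilon\circ\alpha))\circ\Delta$ and $\beta=((\varepsilon\circ\beta)\otimes\id)\circ\Delta$ is a slightly cleaner packaging than the paper's direct chase through (\textbf{P1}), but the content is identical.
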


\begin{proof}
By \Cref{lem:twistpairinverse}, it is enough to show that the set of twisting pairs is closed under composition of pairs of morphisms.
Suppose $(\phi_1, \phi_2)$ and $(\varphi_1, \varphi_2)$ are two twisting pairs. We now show that $(\phi_1\circ \varphi_1,\phi_2\circ \varphi_2)$ is again a twisting pair. For (\textbf{P1}), we have 
\[
(\id \otimes (\phi_1\circ \varphi_1))\circ \Delta=(\id\otimes \phi_1)\circ(\id \otimes \varphi_1)\circ \Delta\overset{({\bf P1)}}{=}(\id\otimes \phi_1)\circ \Delta\circ \varphi_1\overset{({\bf P1)}}{=}\Delta\circ \phi_1\circ \varphi_1=\Delta\circ (\phi_1\circ \varphi_1).
\]
Similarly, we have $((\phi_2\circ \varphi_2)\otimes \id)\circ \Delta=\Delta\circ (\phi_2\circ \varphi_2)$.

For (\textbf{P2}), we can follow the argument for (\textbf{P3}) to get $\varphi_1\circ \phi_2=\phi_2\circ \varphi_1$. Then 
\[\varepsilon\circ \phi_1\circ \varphi_1\circ \phi_2\circ \varphi_2=\varepsilon\circ \phi_1\circ \phi_2\circ \varphi_1\circ \varphi_2\overset{({\bf P2)}}{=}\varepsilon\circ \varphi_1\circ \varphi_2\overset{({\bf P2)}}{=}\varepsilon. \qedhere \]
\end{proof}

Next, we provide the notion of winding endomorphisms (see e.g., \cite[\S 2.5]{BZ08} in the context of Hopf algebras) and show in \Cref{lem:winding} that for any twisting pair $(\phi_1,\phi_2)$ of a bialgebra $B$, $\phi_1$ and $\phi_2$ are uniquely determined by each other as winding endomorphisms. For any algebra homomorphism $\pi: B \to \kk$, let $\Xi^l[\pi]$ denote the \emph{left winding endomorphism} of $B$ defined by 
\[\Xi^l[\pi] = m \circ (\pi \otimes \id) \circ \Delta, \quad \text{that is,} \quad \Xi^l[\pi](h)=\sum \pi(h_1)h_2, \quad \text{for } h \in H.\]
Similarly, let $\Xi^r[\pi]$ denote the \emph{right winding endomorphism} of $B$ defined by
\[\Xi^r[\pi] = m \circ (\id \otimes \pi) \circ \Delta, \quad \text{that is,} \quad \Xi^r[\pi](h)=\sum h_1\pi(h_2), \quad \text{for } h \in H.\]
If in addition $B$ is a Hopf algebra with antipode $S$, one can check that the inverse of $\Xi^{l}[\pi]$ is $(\Xi^{l}[\pi])^{-1}=\Xi^{l}[\pi\circ S]$ making $\Xi^{l}[\pi]$ a winding automorphism. Analogously, $\Xi^r[\pi]$ is also a winding automorphism in the case of a Hopf algebra. The following result shows that twisting pairs of bialgebras (resp. Hopf algebras) are special pairs of winding endomorphisms (resp. automorphisms). 
\begin{lemma}
\label{lem:weak twisting pair}
For any algebra endomorphism $\phi$ of a bialgebra $B$, we have the following:
\begin{enumerate}
    \item The map $\phi$ satisfies $(\id \otimes \phi)\circ \Delta=\Delta\circ \phi$ if and only if $\phi$ is a right winding endomorphism of $B$. In this case, we have $\phi=\Xi^r[\varepsilon\circ \phi]$.
    \item The map $\phi$ satisfies $(\phi\otimes \id)\circ \Delta=\Delta\circ \phi$ if and only if $\phi$ is a left winding endomophism of $B$. In this case, we have $\phi=\Xi^l[\varepsilon\circ \phi]$.
\end{enumerate}
\end{lemma}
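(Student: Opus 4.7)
The plan is to prove part (1) in detail; part (2) will then follow by an entirely symmetric argument, with $\varepsilon \otimes \id$ playing the role of $\id \otimes \varepsilon$ at the appropriate step and the other half of the counit axiom being used.

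For the forward direction of (1), I would start from $(\id \otimes \phi)\circ \Delta = \Delta \circ \phi$ and apply $\id \otimes \varepsilon$ to both sides. On the right-hand side, the counit axiom $(\id \otimes \varepsilon)\circ \Delta = \id$ gives exactly $\phi$. On the left-hand side,
\[
(\id \otimes \varepsilon)\circ (\id \otimes \phi)\circ \Delta = (\id \otimes (\varepsilon \circ \phi))\circ \Delta,
\]
which is by definition $\Xi^r[\varepsilon \circ \phi]$. Since $\varepsilon$ and $\phi$ are both algebra homomorphisms, so is $\varepsilon \circ \phi$, and hence $\Xi^r[\varepsilon \circ \phi]$ is a bona fide right winding endomorphism. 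This yields $\phi = \Xi^r[\varepsilon \circ \phi]$, which simultaneously verifies that $\phi$ is a right winding endomorphism and identifies its winding character.

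For the backward direction, suppose $\phi = \Xi^r[\pi]$ for some algebra homomorphism $\pi : B \to \kk$. The desired identity then reduces to a direct manipulation in Sweedler notation using coassociativity. Writing the iterated coproduct as $\sum h_1 \otimes h_2 \otimes h_3$, one side gives
\[
\Delta(\phi(h)) = \sum \Delta(h_1)\, \pi(h_2) = \sum h_1 \otimes h_2 \,\pi(h_3),
\]
and the other side gives
\[
(\id \otimes \phi)\Delta(h) = \sum h_1 \otimes \phi(h_2) = \sum h_1 \otimes h_2\, \pi(h_3),
\]
so the two agree.

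There is no real obstacle here: the argument consists of one application of the counit axiom in the forward direction and one application of coassociativity in the backward direction. The only subtlety worth noting is that, for the statement to make sense, the winding character $\varepsilon \circ \phi$ extracted in the forward direction must actually be an algebra homomorphism, which is automatic from the assumption that $\phi$ is an algebra endomorphism.
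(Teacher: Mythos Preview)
Your proposal is correct and follows essentially the same approach as the paper: the forward direction applies $\id\otimes\varepsilon$ to the hypothesis and invokes the counit axiom, while the backward direction is a one-line coassociativity computation (the paper writes it in compositional notation rather than Sweedler notation, but it is the same argument). Your explicit remark that $\varepsilon\circ\phi$ is an algebra homomorphism is a nice touch the paper leaves implicit.
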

\begin{proof}
We only show (1) here, and the proof for (2) follows similarly by symmetry. We first assume that $(\id \otimes \phi)\circ \Delta=\Delta\circ \phi$. Then we have
\begin{align*}
    \phi=\id \circ \phi=(\id \otimes \varepsilon)\circ 
\Delta\circ \phi=(\id \otimes \varepsilon)\circ (\id \otimes \phi)\circ \Delta=(\id \otimes (\varepsilon\circ \phi))\circ \Delta=\Xi^r[\varepsilon\circ \phi], 
\end{align*}
showing that $\phi$ is a right winding endomorphism of $B$. Conversely, suppose $\phi=\Xi^r[\pi]$ is a right winding endomorphism of $B$ for some algebra homomorphism $\pi: B\to \kk$. Then we have 
\begin{align*}
    (\id \otimes\phi)\circ \Delta&\,=(\id \otimes \Xi^r[\pi])\circ \Delta=(\id \otimes \id \otimes \pi)\circ (\id \otimes \Delta)\circ \Delta=(\id \otimes \id \otimes \pi)\circ (\Delta\otimes \id)\circ \Delta\\
    &\,=\Delta\circ (\id \otimes \pi)\circ \Delta=\Delta\circ \Xi^r[\pi]=\Delta\circ \phi.\qedhere
\end{align*}
\end{proof}

\begin{remark}
Let $(\phi_1,\phi_2)$ be a pair of algebra \emph{endomorphisms}, not necessarily automorphisms, of a Hopf algebra $H$ satisfying (\textbf{P1})-(\textbf{P2}). By \Cref{lem:weak twisting pair}, $\phi_1$ and $\phi_2$ are winding endomorphisms of $H$; and hence they are automorphisms with inverses 
$$ 
\phi_1^{-1} = \Xi^r[ \varepsilon \circ \phi_1 \circ S] \qquad and \qquad 
\phi_2^{-1} = \Xi^l[\varepsilon \circ \phi_2 \circ S].
$$ 
Therefore, $(\phi_1, \phi_2)$ is  a twisting pair of $H$.
\end{remark}

\begin{lemma}
\label{lem:winding}
For any twisting pair $(\phi_1,\phi_2)$ of a bialgebra $B$, we have 
\[\phi_1^{\pm 1}=\Xi^r[\varepsilon\circ \phi_2^{\mp 1}]\quad \text{and}\quad \phi_2^{\pm 1}=\Xi^l[\varepsilon\circ \phi_1^{\mp 1}].\]
Moreover, for any algebra automorphism $\phi$ of $B$ satisfying $\Delta \circ \phi=(\id \otimes \phi)\circ \Delta$ (resp. $\Delta \circ \phi=(\phi\otimes \id)\circ \Delta$), the pair of maps $(\phi, \Xi^l[\varepsilon\circ \phi^{-1}])$ (resp. $(\Xi^r[\varepsilon\circ \phi^{-1}],\phi)$) is a twisting pair of $B$. 
\end{lemma}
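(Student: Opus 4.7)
The plan is to exploit the characterization of one-sided intertwining endomorphisms as winding endomorphisms from \Cref{lem:weak twisting pair}, combined with a convolution identity of the form $(\varepsilon\circ\phi^{-1})*(\varepsilon\circ\phi)=\varepsilon$ that substitutes for the usual Hopf antipode identity in the bialgebra setting.

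For the first assertion, start from a twisting pair $(\phi_1,\phi_2)$. Condition (\textbf{P1}) and \Cref{lem:weak twisting pair} immediately give $\phi_1=\Xi^r[\varepsilon\circ\phi_1]$ and $\phi_2=\Xi^l[\varepsilon\circ\phi_2]$. Condition (\textbf{P2}), namely $\varepsilon\circ\phi_1\circ\phi_2=\varepsilon$, yields after precomposing with $\phi_2^{-1}$ (respectively $\phi_1^{-1}$) the identities $\varepsilon\circ\phi_1=\varepsilon\circ\phi_2^{-1}$ and $\varepsilon\circ\phi_2=\varepsilon\circ\phi_1^{-1}$. Substituting these into the winding formulas gives the $\phi_1,\phi_2$ cases. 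For the $\phi_1^{-1},\phi_2^{-1}$ cases, simply apply the same argument to the inverse pair $(\phi_1^{-1},\phi_2^{-1})$, which is again a twisting pair by \Cref{eq:P1inverse}.

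For the second assertion, suppose $\phi$ is an algebra automorphism of $B$ with $\Delta\circ\phi=(\id\otimes\phi)\circ\Delta$. First check that $\phi^{-1}$ satisfies the same intertwining: evaluate the given relation at $\phi^{-1}(y)$ and apply $\id\otimes\phi^{-1}$ to both sides. Set $\psi:=\Xi^l[\varepsilon\circ\phi^{-1}]$. \Cref{lem:weak twisting pair}(2) gives $\Delta\circ\psi=(\psi\otimes\id)\circ\Delta$, so together with the assumption on $\phi$, property (\textbf{P1}) holds for $(\phi,\psi)$. For (\textbf{P2}), unfolding the definition of $\psi$ produces
\[
\varepsilon\circ\phi\circ\psi(x)\,=\,\sum\varepsilon\bigl(\phi^{-1}(x_1)\bigr)\,\varepsilon\bigl(\phi(x_2)\bigr)\,=\,\bigl((\varepsilon\circ\phi^{-1})*(\varepsilon\circ\phi)\bigr)(x),
\]
so it remains to show $(\varepsilon\circ\phi^{-1})*(\varepsilon\circ\phi)=\varepsilon$.

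This last identity is the one delicate point, since a bialgebra need not admit an antipode. I would argue it by passing through the winding-endomorphism construction rather than the antipode. A direct Sweedler computation using coassociativity shows that for any algebra maps $\pi_1,\pi_2\colon B\to\kk$ one has $\Xi^r[\pi_1]\circ\Xi^r[\pi_2]=\Xi^r[\pi_1*\pi_2]$, and moreover the assignment $\pi\mapsto\Xi^r[\pi]$ is injective (apply $\varepsilon$ and use the counit axiom). Applied to $\phi^{-1}\circ\phi=\id_B=\Xi^r[\varepsilon]$ via the formula $\phi=\Xi^r[\varepsilon\circ\phi]$, $\phi^{-1}=\Xi^r[\varepsilon\circ\phi^{-1}]$ from \Cref{lem:weak twisting pair}(1), this forces $(\varepsilon\circ\phi^{-1})*(\varepsilon\circ\phi)=\varepsilon$, completing (\textbf{P2}) and hence the twisting-pair property. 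The parenthetical case, with $(\Xi^r[\varepsilon\circ\phi^{-1}],\phi)$, is handled by the symmetric argument using \Cref{lem:weak twisting pair}(2) and the analogous formula $\Xi^l[\pi_1]\circ\Xi^l[\pi_2]=\Xi^l[\pi_2*\pi_1]$.
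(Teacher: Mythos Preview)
Your proof is correct and takes a genuinely different route from the paper. For the first assertion, the paper invokes (\textbf{P4}) directly: writing $\phi_1^{\pm1}=(\phi_1^{\pm1}\otimes\varepsilon)\circ\Delta$ and inserting $(\phi_1^{\pm1}\otimes\phi_2^{\pm1})\circ\Delta=\Delta$ immediately yields $\phi_1^{\pm1}=\Xi^r[\varepsilon\circ\phi_2^{\mp1}]$. You instead use \Cref{lem:weak twisting pair} to write $\phi_1=\Xi^r[\varepsilon\circ\phi_1]$ and then deduce $\varepsilon\circ\phi_1=\varepsilon\circ\phi_2^{-1}$ from (\textbf{P2}); note that your second identity $\varepsilon\circ\phi_2=\varepsilon\circ\phi_1^{-1}$, obtained by ``precomposing with $\phi_1^{-1}$'', tacitly uses (\textbf{P3}), which is of course available from \Cref{lem:twisting pair prop}.

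For the second assertion the contrast is sharper. The paper checks (\textbf{P2}) by a direct Sweedler computation, using $(\id\otimes\phi)\circ\Delta=\Delta\circ\phi$ to collapse $\varepsilon\circ\phi\circ\phi'$ step by step to $\varepsilon$. Your argument is more structural: you observe that $\pi\mapsto\Xi^r[\pi]$ is an injective monoid map from $(\mathrm{Alg}(B,\kk),*)$ to $(\mathrm{End}_{\mathrm{alg}}(B),\circ)$, and then read off the convolution identity $(\varepsilon\circ\phi^{-1})*(\varepsilon\circ\phi)=\varepsilon$ from $\phi^{-1}\circ\phi=\id$. This is a clean substitute for the antipode in the bialgebra setting, and it has the bonus of immediately showing that $\psi=\Xi^l[\varepsilon\circ\phi^{-1}]$ is an algebra \emph{automorphism} (with inverse $\Xi^l[\varepsilon\circ\phi]$), a point that Definition~\ref{defn:twisting pair} requires but which the paper leaves implicit. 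The paper's approach is shorter on the page; yours isolates a reusable fact about winding endomorphisms.
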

\begin{proof}
It follows from (\textbf{P4}) and \Cref{eq:P1inverse} that 
\begin{align*}
    \phi_1^{\pm 1}=(\phi_1^{\pm 1}\otimes \varepsilon)\circ \Delta=(\id \otimes (\varepsilon\circ \phi_2^{\mp 1}))\circ (\phi_1^{\pm 1}\otimes \phi_2^{\pm 1})\circ \Delta=(\id \otimes (\varepsilon\circ \phi_2^{\mp 1}))\circ \Delta=\Xi^r[\varepsilon\circ \phi_2^{\mp 1}].
\end{align*}
One can argue for $\phi_2^{\pm 1}$ similarly. 

Now suppose $\phi$ is an algebra automorphism of $B$ satisfying $\Delta \circ \phi=(\id \otimes \phi)\circ \Delta$. We remark that in this case also $\Delta \circ \phi^{-1}=(\id \otimes \phi^{-1} )\circ \Delta$. Denote  $\phi'=\Xi^l[\varepsilon\circ \phi^{-1}]$. We check that $(\phi, \phi')$ is a twisting pair of $B$ by verifying the conditions in \Cref{defn:twisting pair}.

For (\textbf{P1}), we have
\begin{align*}
    \Delta\circ \phi'&=\Delta\circ ((\varepsilon\circ \phi^{-1})\otimes \id)\circ \Delta=((\varepsilon\circ \phi^{-1})\otimes \id\otimes \id)\circ (\id \otimes \Delta)\circ \Delta=((\varepsilon\circ \phi^{-1})\otimes \id\otimes \id)\circ (\Delta \otimes \id)\circ \Delta\\
    &=(((\varepsilon\circ \phi^{-1})\otimes \id)\circ \Delta)\otimes \id)\circ \Delta=(\phi'\otimes \id)\circ \Delta.
\end{align*}
And for (\textbf{P2}), we have
\begin{align*}
    \varepsilon\circ \phi\circ \phi'&=\varepsilon\circ \phi \circ ((\varepsilon\circ \phi^{-1})\otimes \id)\circ \Delta=((\varepsilon\circ \phi^{-1})\otimes \varepsilon)\circ (\id \otimes \phi)\circ \Delta=((\varepsilon\circ \phi^{-1})\otimes \varepsilon)\circ\Delta\circ \phi\\
    &=((\varepsilon\circ \phi^{-1})\otimes \id)\circ (\id\otimes \varepsilon)\circ\Delta\circ \phi=\varepsilon\circ \phi^{-1}\circ \phi=\varepsilon.
\end{align*}
A similar argument may be used for the case when $\Delta\circ \phi=(\phi\otimes \id)\circ \Delta$.
\end{proof}

\begin{thm}
\label{thm:twistingpair}
Let $(B,m,u,\Delta,\varepsilon)$ be a bialgebra. Then any twisting pair of $B$ is given by 
\[
\left(\Xi^r[\pi_1],\Xi^l[\pi_2]\right)
\]
where $\pi_1,\pi_2:B\to \kk$ are two algebra homomorphisms satisfying $(\pi_2\otimes \pi_1) \circ \Delta=(\pi_1\otimes \pi_2) \circ \Delta=\varepsilon$. Moreover, if $B$ is a Hopf algebra with antipode $S$, then we have $\pi_1=\pi_2\circ S$ and  $\pi_2=\pi_1\circ S$.
\end{thm}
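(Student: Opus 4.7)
The plan is to extract the characters $\pi_i := \varepsilon \circ \phi_i$ from the twisting pair and then read off the claimed identities using Lemmas \ref{lem:twisting pair prop} and \ref{lem:weak twisting pair}; the Hopf-algebra addendum will follow as a uniqueness statement about convolution inverses.

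Starting from a twisting pair $(\phi_1,\phi_2)$, I set $\pi_i := \varepsilon \circ \phi_i$, which are algebra homomorphisms $B \to \kk$ because $\phi_i$ and $\varepsilon$ are. Condition (\textbf{P1}) together with Lemma \ref{lem:weak twisting pair} immediately gives $\phi_1 = \Xi^r[\pi_1]$ and $\phi_2 = \Xi^l[\pi_2]$. The first identity $(\pi_1 \otimes \pi_2)\Delta = \varepsilon$ is precisely equation (\ref{eq:1.1.3}) derived in the proof of Lemma \ref{lem:twisting pair prop}. For the twin identity $(\pi_2 \otimes \pi_1)\Delta = \varepsilon$, the key move is to use (\textbf{P1}) twice to rewrite $(\phi_2 \otimes \phi_1)\circ \Delta = \Delta \circ (\phi_2 \circ \phi_1)$, apply (\textbf{P3}) to replace $\phi_2 \circ \phi_1$ by $\phi_1 \circ \phi_2$, then compose with $\varepsilon \otimes \varepsilon$ and invoke (\textbf{P2}).

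For the converse direction, which makes the characterization sharp, given algebra homomorphisms $\pi_1,\pi_2$ satisfying both convolution identities I set $\phi_1 := \Xi^r[\pi_1]$ and $\phi_2 := \Xi^l[\pi_2]$. These are algebra endomorphisms because $\Delta$ and each $\pi_i$ are, and the composition law $\Xi^r[\pi]\circ\Xi^r[\pi'] = \Xi^r[\pi*\pi']$ (together with its left analogue) combined with $\pi_1 * \pi_2 = \pi_2 * \pi_1 = \varepsilon$ shows that $\phi_1,\phi_2$ are automorphisms with inverses $\Xi^r[\pi_2]$ and $\Xi^l[\pi_1]$. Condition (\textbf{P1}) is immediate from Lemma \ref{lem:weak twisting pair}, and a short Sweedler calculation for $\varepsilon \circ \phi_1 \circ \phi_2$ reduces (\textbf{P2}) to $(\pi_2 \otimes \pi_1)\Delta = \varepsilon$.

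For the Hopf-algebra addendum, the two identities say precisely that $\pi_2$ is a two-sided convolution inverse of $\pi_1$ in the convolution algebra $\mathrm{Hom}_\kk(H,\kk)$. Since the antipode axiom already exhibits $\pi_1 \circ S$ as a convolution inverse of $\pi_1$, uniqueness of convolution inverses forces $\pi_2 = \pi_1 \circ S$, and by symmetry $\pi_1 = \pi_2 \circ S$. I do not anticipate a serious obstacle: essentially all of the content has been front-loaded into the preceding two lemmas, and the theorem is a clean repackaging of them in the language of characters, with invertibility in the converse direction being the only point requiring a brief independent check.
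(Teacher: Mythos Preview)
Your proposal is correct and follows essentially the same approach as the paper's proof: both extract $\pi_i=\varepsilon\circ\phi_i$ via \Cref{lem:weak twisting pair}, translate (\textbf{P2}) and (\textbf{P4}) into the two convolution identities $(\pi_2\otimes\pi_1)\Delta=(\pi_1\otimes\pi_2)\Delta=\varepsilon$, verify invertibility from $(\Xi^r[\pi_1])^{-1}=\Xi^r[\pi_2]$ and $(\Xi^l[\pi_2])^{-1}=\Xi^l[\pi_1]$, and deduce the Hopf addendum from uniqueness of convolution inverses. Your write-up is slightly more explicit in deriving $(\pi_2\otimes\pi_1)\Delta=\varepsilon$ (via $(\phi_2\otimes\phi_1)\circ\Delta=\Delta\circ\phi_2\circ\phi_1$ and (\textbf{P3})) where the paper simply declares the equivalence with (\textbf{P2}) straightforward, but this is a presentational difference rather than a different argument.
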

\begin{proof}
According to \Cref{lem:weak twisting pair} and \Cref{lem:winding}, any twisting pair of $B$ is given by $(\Xi^r[\pi_1],\Xi^l[\pi_2])$ for two algebra homomorphisms $\pi_1,\pi_2: B\to \kk$. We now claim that (i) \textbf{(P2)} holds if and only if $(\pi_2\otimes \pi_1)\circ \Delta=\varepsilon$, and (ii) \textbf{(P4)} holds if and only if $(\pi_1\otimes \pi_2)\circ \Delta=\varepsilon$. For $b \in B$, we have
\begin{align*}
    (\varepsilon \circ \phi_1 \circ \phi_2) (b) &=\sum \varepsilon \circ \phi_1 (\pi_2(b_1) b_2)\\
    &= \sum \varepsilon(\pi_2(b_1) b_2 \pi_1 (b_3))\\
    &=\sum \pi_2 (b_1 \varepsilon(b_2)) \pi_1(b_2)\\
    &= \sum \pi_2(b_1) \pi_1(b_2)\\
    &=((\pi_2 \otimes \pi_1) \circ \Delta )(b).
\end{align*}
Claim (i) follows. To prove (ii), suppose on one hand that $(\pi_1 \otimes \pi_2) \circ \Delta = \varepsilon$. Then note that
\begin{align*}
    ((\phi_1 \otimes \phi_2) \circ \Delta)(b)&= \sum (\phi_1 \otimes \phi_2)(b_1 \otimes b_2)\\
    &= \sum (b_1 \pi_1(b_2) \otimes \pi_2 (b_3) b_4)\\
    &=\sum((b_1 \otimes b_4) \cdot (\pi_1(b_2) \otimes \pi_2(b_3))\\
    &=\sum ((b_1 \otimes b_3)\cdot \varepsilon(b_2))\\
    &=\sum b_1 \otimes \varepsilon(b_2) b_3\\
    &=\Delta(b)
\end{align*}
for all $b \in B$, which shows that \textbf{(P4)} holds. For the other direction, assume \textbf{(P4)} holds, in other words,
\begin{align*}
    \sum b_1 \pi_1(b_2) \otimes \pi_2(b_3) b_4 &=  \sum b_1 \otimes b_2
\end{align*}
for all $b \in B$. Applying $\varepsilon \otimes \varepsilon$ to both sides, we obtain
\begin{align*}
    \sum \varepsilon(b_1) \pi_1(b_2) \pi_2(b_3) \varepsilon (b_4) &=\varepsilon(b),
\end{align*}
from which it follows that 
\begin{align*}
    \sum \pi_1(b_1) \pi_2(b_2) &= \epsilon(b)
\end{align*}
for all $b \in B$, in other words, $(\pi_1\otimes \pi_2)\circ \Delta=\varepsilon$. This shows (ii). 

It remains to show that $\Xi^r[\pi_1]$ and $\Xi^l[\pi_2]$ are algebra automorphisms of $B$, which follows from $(\Xi^r[\pi_1])^{-1}=\Xi^r[\pi_2]$ and $(\Xi^l[\pi_2])^{-1}=\Xi^l[\pi_1]$.  

Furthermore, suppose $B$ is a Hopf algebra with antipode $S$. Then $(\pi_2\otimes \pi_1)\circ \Delta=\varepsilon$ if and only if $\pi_2*\pi_1=u\varepsilon$ in ${\rm End}_\kk(B)$ with respect to the convolution product $*$. Since $\pi_1\circ S$ is the two-sided inverse of $\pi_1$ with respect to $*$, we get $\pi_2=\pi_1\circ S$ and similarly $\pi_1=\pi_2\circ S$.
\end{proof}

\begin{remark} 
As a consequence of the previous result, if $B$ is a bialgebra satisfying the twisting conditions (\textbf{T1})-(\textbf{T2}), then any twisting pair of $B$ naturally preserves the algebra grading of $B$.
\end{remark}

\begin{Ex}
\label{lem:twistingAG}
Here we give some examples of twisting pairs of various Hopf algebras in view of \Cref{thm:twistingpair}. More examples will be considered in depth in \Cref{sec:Manin} and \Cref{section-twist-qybe}.
\begin{enumerate}
    \item Let $G$ be any algebraic group over $\kk$, and denote by $\mathcal O(G)$ its coordinate ring. Suppose $\pi: \mathcal O(G)\to \kk$ is any algebra homomorphism. Then, ${\rm ker}(\pi)$ is a maximal ideal of $\mathcal O(G)$ of codimension one which corresponds to a group element $g$ of $G$. Moreover, the winding automorphisms $\Xi^r[\pi]$ and $\Xi^l[\pi]$ are indeed algebra automorphisms of $\mathcal O (G)$ induced by the right and left translation on $G$ by $g$, which we denote by $r_g$ and $\ell_g$, respectively. One further checks that if there are two algebra homomorphisms $\pi_1,\pi_2: \mathcal O(G)\to \kk$, then $\pi_1=\pi_2\circ S$ if and only if the group elements in $G$ given by $\pi_1$ and $\pi_2$ are inverse to each other. Therefore, for any $g \in G$, $\left(r_g, \ell_{g^{-1}} \right)$ is a twisting pair for $\mathcal O(G)$, and moreover every twisting pair of $\mathcal O(G)$ has this form.
    \item Let $F\in \GL_n(\kk)$. The universal cosovereign Hopf algebra $H(F)$ introduced by Bichon in \cite{Bichon2001} is the algebra with generators $\mathbb U=(u_{ij})_{1\leq i,j\leq n}, \mathbb V=(v_{ij})_{1\leq i,j\leq n}$ and relations
\[\mathbb U\mathbb V^T=I_n=\mathbb V^T\mathbb U \qquad and \qquad \mathbb VF\mathbb U^TF^{-1}=I_n=F\mathbb U^TF^{-1}\mathbb V.\]
Here, we denote the matrix transpose by $(-)^T$. The Hopf algebra structure of $H(F)$ is given by
\begin{gather*}
    \Delta(u_{ij})=\sum_{1\leq k\leq n} u_{ik}\otimes u_{kj},\qquad \Delta(v_{ij})=\sum_{1\leq k\leq n} v_{ik}\otimes v_{kj},\\
    \varepsilon(u_{ij})=\delta_{ij}=\varepsilon(v_{ij}),\\
    S(\mathbb U) =\mathbb V^T,\qquad  S(\mathbb V)=F\mathbb U^TF^{-1}.
\end{gather*}
It is immediate that any algebra homomorphism $\pi: H(F)\to \kk$ is given by a pair of invertible matrices $A,B\in \GL_n(\kk)$ such that $\pi(\mathbb U)=A$ and $\pi(\mathbb V)=B$ satisfying 
\[
AB^T=I_n=B^TA \qquad \text{and} \qquad BFA^TF^{-1}=I_n=FA^TF^{-1}B.
\]
This yields that $B=(A^{-1})^T$ and $FA^T=A^TF$. As a consequence, every twisting pair $(\phi_1,\phi_2)$ of $H(F)$ is given by
\[
\phi_1(\mathbb U)=\mathbb UA,\quad \phi_1(\mathbb V)=\mathbb V (A^{-1})^T;\quad \phi_2(\mathbb U)=A^{-1}\mathbb U,\quad \phi_2(\mathbb V)=A^T\mathbb V,
\]
for some $A\in \GL_n(\kk)$ that commutes with $F^T$.
\item Let $H$ be any cocommutative Hopf algebra over an algebraically closed field $\kk$ of characteristic zero. By the Cartier-Gabriel-Kostant Theorem \cite[\S 5.6]{M1993}, $H\cong U(\mathfrak g)\#\kk[G]$ where the smash product is constructed from the action of the group algebra $\kk[G]$ on the universal enveloping algebra $U(\mathfrak{g})$, for some group $G$ and Lie algebra $\mathfrak g$. Any algebra homomorphism $\pi: U(\mathfrak g)\#\kk[G]\to \kk$ is given by a pair of characters $(\rho,\chi)$, where $\rho: \mathfrak g\to \kk$ and $\chi: G\to \kk^\times$ are associated with some one-dimensional representations of the Lie algebra $\mathfrak g$ and the group $G$, respectively. Moreover, we have $\pi(\mathfrak g)=\rho(\mathfrak g)$ and $\pi(G)=\chi(G)$, where $\rho$ is constant on $G$-orbits of $\mathfrak g$. Therefore, any twisting pair $(\phi_1,\phi_2)$ of $U(\mathfrak g)\#\kk[G]$ is given by
\[
\phi_1(x)=x+\rho(x),\quad \phi_1(g)=\chi(g)g;\quad \phi_2(x)=x-\rho(x),\quad \phi_2(g)=\chi(g)^{-1}g,
\]
for any $x\in \mathfrak g$ and any $g\in G$ for such a pair of characters $(\rho,\chi)$. One observes that for any twisting pair $(\phi_1,\phi_2)$ of a cocommutative Hopf algebra $H$, we always have $\phi_1\circ \phi_2=\id$.
\end{enumerate}
\end{Ex}

Next, we aim to lift the twisting of a bialgebra to that of a Hopf algebra through the construction of a Hopf envelope. To that end, we examine here how a twisting pair of $B$ can be lifted to a twisting pair of its Hopf envelope. It is well-known that the forgetful functor from the category of Hopf algebras to the category of bialgebras has a left adjoint. That is, for a bialgebra $B$, there exists a \emph{Hopf envelope} of $B$ which is a Hopf algebra, denoted by $\mathcal H(B)$, together with a bialgebra map $i_B: B\to \mathcal H(B)$ satisfying the following universal property: for any Hopf algebra $H$ together with a bialgebra map $\phi: B\to H$, there exists a unique Hopf algebra map $f: \mathcal H(B)\to H$ such that the following diagram commutes:
\[
\xymatrix{
B\ar[r]^-{i_B}\ar[dr]_-{\phi} & \mathcal H(B)\ar[d]^-{f}\\
& H.
}
\]

We use the explicit construction of the Hopf envelope $\mathcal H(B)$ given by Takeuchi in \cite{Takeuchi1971}. First, we consider the bialgebra $B=\kk\langle V\rangle/(R)$, where $V$ is a $\kk$-vector space. Here, the comultiplication $\Delta$ and counit $\varepsilon$ of $B$ are induced from the free algebra via $\Delta: \kk\langle V\rangle \to \kk\langle V\rangle\otimes \kk\langle V\rangle$ and $\varepsilon: \kk\langle V\rangle \to \kk$ as algebra maps. In particular, $R$ is a bi-ideal such that 
\[\Delta(R)\subseteq R\otimes \kk\langle V\rangle+\kk\langle V\rangle \otimes R \qquad \text{and} \qquad \varepsilon(R)=0.\] 
We denote infinitely many copies of the generating space $V$ as $\{V^{(i)}=V\}_{i\ge 0}$ and consider the free algebra 
\begin{equation}\label{eq:freeT}
T:=\kk\langle\, \oplus_{i\ge 0} V^{(i)}\rangle.
\end{equation}
Let $S$ be the anti-algebra map on $T$ with $S(V^{(i)})=V^{(i+1)}$ for any $i\ge 0$. Both algebra maps $\Delta: \kk\langle V\rangle \to \kk\langle V\rangle \otimes \kk\langle V\rangle$ and $\varepsilon: \kk\langle V\rangle\to \kk$ extend uniquely to $T$ as algebra maps via $S_{(12)}(S\otimes S)\circ \Delta=\Delta \circ S$ and $\varepsilon\circ S=\varepsilon$, which we still denote by $\Delta: T\to T\otimes T$ and $\varepsilon: T\to \kk$ accordingly. Here, $S_{(12)}$ is the flip map on the 2-fold tensor product. One can check that the Hopf envelope of $B$ has a presentation
\[
\mathcal H(B)=T/W,   
\]
where the ideal $W$ is generated by 
\begin{equation}\label{eq:RHB}
 S^i(R), \quad (m \circ (\id \otimes S) \circ \Delta-u \circ \varepsilon)(V^{(i)}), \quad \text{and} \ (m \circ (S\otimes \id) \circ \Delta-u \circ \varepsilon)(V^{(i)}),\quad \text{for all } i\ge 0.   
\end{equation}
One can check that $W$ is a Hopf ideal of $T$, and so the Hopf algebra structure maps $\Delta$, $\varepsilon$, and $S$ of $T$ give a Hopf algebra structure on $T/W$. Finally, the natural bialgebra map $i_B: B\to \mathcal H(B)$ is given by the natural embedding $\kk\langle V\rangle \hookrightarrow T$ by identifying $V=V^{(0)}$.

\begin{lemma}
\label{t1-t3-hopf-env} 
Let $B$ be a bialgebra satisfying the twisting conditions (\textbf{T1})-(\textbf{T2}). The following hold:
\begin{enumerate}
    \item Its Hopf envelope $\mathcal H(B)$ also satisfies the twisting conditions (\textbf{T1})-(\textbf{T2}).
    \item The natural bialgebra map $i_B: B\to \mathcal H(B)$ preserves the algebra gradings. 
    \item Let $S$ be the antipode of $\mathcal H(B)$. Then we have $S(\mathcal H(B)_n)\subseteq \mathcal H(B)_{-n}$ for any $n\in \mathbb Z$.
\end{enumerate}
\end{lemma}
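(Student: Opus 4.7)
The plan is to endow the free algebra $T=\kk\langle \bigoplus_{i\geq 0}V^{(i)}\rangle$ appearing in Takeuchi's construction with a $\mathbb Z$-grading and then descend everything to $\mathcal H(B)=T/W$. Since $B$ satisfies (\textbf{T1}), we may arrange the presentation $B=\kk\langle V\rangle/(R)$ so that the generating subspace $V\subseteq B$ is itself graded, say $V=\bigoplus_n V_n$ with $V_n\subseteq B_n$; then $R$ is automatically a homogeneous ideal of $\kk\langle V\rangle$. Using (\textbf{T2}), we may further choose the algebra lift $\Delta\colon \kk\langle V\rangle\to \kk\langle V\rangle\otimes \kk\langle V\rangle$ so that $\Delta(V_n)\subseteq (\kk\langle V\rangle)_n\otimes (\kk\langle V\rangle)_n$, by lifting each tensorand of $\Delta_B(v)\in B_n\otimes B_n$ to a homogeneous element of degree $n$ in $\kk\langle V\rangle$.

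The key move is to declare that the copy of $v\in V_n$ inside $V^{(i)}$ has degree $(-1)^i n$, and to extend multiplicatively to an algebra grading on the free algebra $T$. I first observe that the anti-algebra map $S\colon T\to T$ with $S(V^{(i)})=V^{(i+1)}$ flips degrees on generators, and hence satisfies $S(T_m)\subseteq T_{-m}$ globally. Next, I prove by induction on $i\geq 0$ that $\Delta(V^{(i)}_m)\subseteq T_m\otimes T_m$, which then extends to $\Delta(T_m)\subseteq T_m\otimes T_m$ since $\Delta$ is an algebra map. The base case $i=0$ is the arrangement above, and the inductive step uses the defining relation $\Delta\circ S=S_{(12)}\circ (S\otimes S)\circ \Delta$: if $v\in V^{(i)}$ has degree $d$ and $\Delta(v)=\sum v_1\otimes v_2\in T_d\otimes T_d$, then
\[\Delta(S(v))=S_{(12)}\circ(S\otimes S)\left(\sum v_1\otimes v_2\right)=\sum S(v_2)\otimes S(v_1)\in T_{-d}\otimes T_{-d},\]
matching the degree $-d$ of $S(v)\in V^{(i+1)}$.

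With these two compatibilities in place, each generator of the ideal $W$ is homogeneous: $S^i(R)$ is homogeneous because $R$ is homogeneous and $S$ reverses degrees, while for $v\in V^{(i)}$ of degree $d$, both $(m\circ (\id\otimes S)\circ\Delta)(v)$ and $(m\circ (S\otimes\id)\circ\Delta)(v)$ lie in $T_0$ (since $\Delta(v)\in T_d\otimes T_d$ and $S$ applied to either tensorand lands in $T_{-d}$, so multiplication produces $T_{d+(-d)}=T_0$), matching the scalar $u\varepsilon(v)\in T_0$. Hence $W$ is a graded ideal of $T$, and $\mathcal H(B)=T/W$ inherits a $\mathbb Z$-algebra grading, giving (\textbf{T1}); the descent of $\Delta(T_m)\subseteq T_m\otimes T_m$ gives (\textbf{T2}), and together these prove part (1). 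Part (2) is immediate because $i_B$ is induced by the identification $V=V^{(0)}$, which preserves degrees by construction. Part (3) follows from $S(T_m)\subseteq T_{-m}$ descending to the quotient.

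I expect the main technical obstacle to be the inductive verification that the comultiplication on $T$ preserves the grading. One must check that the initial lift of $\Delta_B$ to $\kk\langle V\rangle$ can indeed be chosen so that $\Delta(V_n)\subseteq (\kk\langle V\rangle)_n\otimes (\kk\langle V\rangle)_n$ in the strong bidegree sense, rather than merely landing in total algebra degree $n$, and one must track carefully how the flip $S_{(12)}$ interacts with the alternating sign $(-1)^i$ across iterations of the antipode. Once these compatibilities are set up correctly, the homogeneity of the generators of $W$ reduces to a routine check.
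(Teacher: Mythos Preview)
Your proposal is correct and follows essentially the same approach as the paper: grade $T$ by declaring $\deg S(v)=-\deg v$ inductively (equivalently, degree $(-1)^i n$ on $V^{(i)}$), verify that $\Delta$ and $S$ respect this grading on $T$, conclude that $W$ is homogeneous, and descend to $\mathcal H(B)=T/W$. The paper's proof is considerably terser and omits the inductive check of $\Delta(T_m)\subseteq T_m\otimes T_m$ and the degree analysis of the antipode relations in $W$, which you have correctly supplied.
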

\begin{proof} 
Suppose $B$ satisfies (\textbf{T1}) and (\textbf{T2}). In particular, $B$ is $\mathbb Z$-graded. Then, in the presentation of $B=\kk\langle V\rangle/(R)$, we may choose $V$ to be a graded vector space and $R$ a homogeneous ideal in the free algebra $\kk\langle V\rangle$ with internal grading.
We construct the Hopf envelope $\mathcal H(B)$ as above and set ${\rm deg}\, S(v_i)=-{\rm deg}(v_i)$ inductively for all homogeneous $v_i\in V^{(i)}$ and $i\ge 0$. Then it follows that $T=\bigoplus_{n\in \mathbb Z} T_n$ in \eqref{eq:freeT} is again graded by internal grading (also called Adam's grading, induced from the grading of $B$). Moreover, we may check that the comultiplication $\Delta: T\to T\otimes T$ satisfies $\Delta(T_n)\subseteq T_n\otimes T_n$. As a consequence, $W$ is generated by homogeneous elements in $T$. Hence $\mathcal H(B)=T/W$ satisfies all the requirements in the statement. 
\end{proof} 

As a consequence, we introduce an additional twisting condition \textbf{(T3)} for any Hopf algebra.  

\begin{Cor}\label{cor:T3}
Let $H=\bigoplus_{n\in \mathbb Z} H_n$ be any Hopf algebra satisfying \textbf{(T1)}-\textbf{(T2)}. Then $H$ satisfies:
\begin{enumerate}
    \item[\textbf{(T3)}] $S(H_n)\subseteq H_{-n}$ for any $n\in \mathbb Z$.
\end{enumerate}
\end{Cor}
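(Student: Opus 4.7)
The plan is to identify $H$ with its own Hopf envelope as a graded Hopf algebra, and then lift property (T3) from $\mathcal H(H)$---which \Cref{t1-t3-hopf-env} already provides---back to $H$. First I would view $H$ as a bialgebra satisfying (T1)--(T2) and appeal to \Cref{t1-t3-hopf-env} to obtain a $\mathbb Z$-grading on $\mathcal H(H)$ for which the natural bialgebra map $i_H : H \to \mathcal H(H)$ is graded and the antipode $S_{\mathcal H(H)}$ satisfies $S_{\mathcal H(H)}(\mathcal H(H)_n) \subseteq \mathcal H(H)_{-n}$ for all $n \in \mathbb Z$.

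Next I would invoke the universal property of the Hopf envelope twice. Applied to the identity map $\id_H$, regarded as a bialgebra map from $H$ into the Hopf algebra $H$, it yields a unique Hopf algebra map $f : \mathcal H(H) \to H$ with $f \circ i_H = \id_H$. Applied instead to $i_H$ itself, the uniqueness clause forces $i_H \circ f = \id_{\mathcal H(H)}$, since both $i_H \circ f$ and $\id_{\mathcal H(H)}$ are Hopf algebra endomorphisms of $\mathcal H(H)$ extending $i_H$ along $i_H$. Thus $i_H$ and $f$ are mutually inverse Hopf algebra isomorphisms.

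The step I expect to require the most care---although it is short---is upgrading the graded algebra inclusion $i_H(H_n) \subseteq \mathcal H(H)_n$ to the equality $i_H(H_n) = \mathcal H(H)_n$, so that (T3) can be transported directly through the isomorphism. For this I would take $y \in \mathcal H(H)_n$, set $x := f(y)$ and write $x = \sum_m x_m$ with $x_m \in H_m$; then $y = i_H(x) = \sum_m i_H(x_m)$ with each $i_H(x_m) \in \mathcal H(H)_m$, and the directness of the grading on $\mathcal H(H)$ together with the injectivity of $i_H$ will force $x_m = 0$ for $m \neq n$, giving $x \in H_n$. Consequently $f$ also preserves the grading, and transporting (T3) through the graded isomorphism will give
\[
S_H(H_n) \;=\; f \circ S_{\mathcal H(H)} \circ i_H (H_n) \;\subseteq\; f(\mathcal H(H)_{-n}) \;=\; H_{-n},
\]
which is exactly (T3).
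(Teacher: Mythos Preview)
Your proposal is correct and follows essentially the same route as the paper: identify $H$ with its Hopf envelope via the universal property and pull back (\textbf{T3}) from \Cref{t1-t3-hopf-env}. The paper's proof is terser---it simply asserts that $i_H$ is a grading-preserving bialgebra isomorphism by the universal property---whereas you spell out the construction of the inverse $f$, the uniqueness argument giving $i_H\circ f=\id$, and the verification that $i_H(H_n)=\mathcal H(H)_n$; these are exactly the details the paper leaves implicit.
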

\begin{proof}
If $H$ is a Hopf algebra satisfying (\textbf{T1}) and (\textbf{T2}), then its Hopf envelope satisfies (\textbf{T1})-(\textbf{T3}) by \Cref{t1-t3-hopf-env}. By \Cref{t1-t3-hopf-env} (2) and the universal property,  $\mc{H}(H)$ is isomorphic to $H$ as bialgebras via $i_H$ preserving the algebra grading. Hence any Hopf algebra satisfying (\textbf{T1}) and (\textbf{T2}) also satisfies (\textbf{T3}).
\end{proof}

\begin{Ex}
\label{ex:H(e)}
For any $n$-dimensional vector space $V$ over $\kk$ and any integer $m \ge 2$, the universal quantum group $\mathcal H(e)$, defined for a preregular form $e: V^{\otimes m} \to \kk$ with generators $(a_{ij})_{1\leq i, j\leq n}$, $(b_{ij})_{1\leq i, j\leq n}$ and $D^{\pm 1}$, was studied in \cite{BDV13,Chirvasitu-Walton-Wang2019}. 
By setting ${\rm deg}(a_{ij})=1$, ${\rm deg}(b_{ij})=-1$ and ${\rm deg}(D^{\pm 1})=\pm n$, it is straightforward to check that $\mathcal H(e)$ satisfies the twisting conditions (\textbf{T1})-(\textbf{T3}). In later sections, we study in detail some other classes of (universal) quantum groups that also satisfy (\textbf{T1})-(\textbf{T3}).
\end{Ex}

We now show that any twisting pair of a bialgebra gives rise to a twisting pair of its Hopf envelope. 

\begin{proposition}
\label{cor:lifting}
Any twisting pair of a bialgebra $B$ can be extended uniquely to a twisting pair of its Hopf envelope $\mathcal H(B)$. Moreover, any twisting pair of $\mathcal H(B)$ is obtained from some twisting pair of $B$ in such a way.
\end{proposition}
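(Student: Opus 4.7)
The plan is to translate the statement into the language of characters via \Cref{thm:twistingpair} and then use Takeuchi's explicit construction of the Hopf envelope from \eqref{eq:freeT}--\eqref{eq:RHB}. Recall that a twisting pair of $B$ (resp.\ of $\mathcal H(B)$) corresponds bijectively to a pair $(\pi_1,\pi_2)$ of algebra homomorphisms to $\kk$ satisfying $(\pi_1\otimes \pi_2)\circ \Delta=(\pi_2\otimes \pi_1)\circ \Delta=\varepsilon$, with the additional identity $\pi_1=\pi_2\circ S$ in the Hopf algebra case. Hence it suffices to establish a bijection between such pairs of characters of $B$ and their extensions along $i_B$ to single characters of $\mathcal H(B)$.

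For the extension, starting from $(\pi_1,\pi_2)$ on $B=\kk\langle V\rangle/(R)$, I will define $\tilde\pi_j:T\to\kk$ as the unique algebra homomorphisms determined on the free generators $V^{(i)}\subseteq T$ by the rule
\[
\tilde\pi_j|_{V^{(i)}}=\pi_j \quad \text{if $i$ is even}, \qquad \tilde\pi_j|_{V^{(i)}}=\pi_{3-j} \quad \text{if $i$ is odd},
\]
under the identification $V^{(i)}=V$. Because $\kk$ is commutative, the composite $\tilde\pi_j\circ S$ is an algebra map $T\to\kk$, and by the rule above it agrees with $\tilde\pi_{3-j}$ on every generator, hence on all of $T$. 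The main step is to check that $\tilde\pi_j$ annihilates the Hopf ideal $W$ of \eqref{eq:RHB}. For the relations $S^i(R)$, the composite $\tilde\pi_j\circ S^i$ is an algebra map $\kk\langle V\rangle\to\kk$ whose restriction to $V$ is either $\pi_j$ or $\pi_{3-j}$, both of which vanish on $R$. For the antipode relations, I will use that $T$ carries a bialgebra structure (built into the Takeuchi construction), so that $\tilde\pi_j*\tilde\pi_{3-j}$ is again an algebra map, and then prove by induction on $i$ that $(\tilde\pi_j*\tilde\pi_{3-j})|_{V^{(i)}}=\varepsilon|_{V^{(i)}}$; the base case is the hypothesis on $(\pi_1,\pi_2)$, and the inductive step uses $\Delta\circ S=S_{(12)}(S\otimes S)\circ\Delta$ together with $\tilde\pi_j\circ S=\tilde\pi_{3-j}$ and $\varepsilon\circ S=\varepsilon$. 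Since algebra maps $T\to\kk$ agreeing on generators must coincide, the identity $\tilde\pi_j*\tilde\pi_{3-j}=\varepsilon$ propagates to all of $T$, which is precisely what is needed to kill the antipode generators of $W$.

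Uniqueness of the extension is forced by \Cref{thm:twistingpair}: on $\mathcal H(B)$ the identity $\tilde\pi_{3-j}=\tilde\pi_j\circ S$ together with $\tilde\pi_j\circ i_B=\pi_j$ completely determines $\tilde\pi_j$ on every $V^{(i)}$. For the converse, a twisting pair $(\tilde\pi_1,\tilde\pi_2)$ of $\mathcal H(B)$ pulls back along the bialgebra map $i_B$ to algebra homomorphisms $\pi_j=\tilde\pi_j\circ i_B:B\to\kk$ whose convolution compatibility is inherited from $\mathcal H(B)$ because $i_B$ is a coalgebra map; the uniqueness already established ensures that re-extending this restricted pair recovers $(\tilde\pi_1,\tilde\pi_2)$. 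The main technical obstacle I anticipate is the inductive verification that $\tilde\pi_j*\tilde\pi_{3-j}=\varepsilon_T$ on all of $T$, since this requires carefully tracking how the iterated anti-algebra shift $S$ interacts with the comultiplication on $T$; once this identity is in place, the remainder of the argument is formal.
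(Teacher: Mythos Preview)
Your proposal is correct and follows essentially the same route as the paper's proof: both reduce via \Cref{thm:twistingpair} to extending a pair of characters $(\pi_1,\pi_2)$ along Takeuchi's construction, define the extension by the alternating rule $\tilde\pi_j\circ S=\tilde\pi_{3-j}$, and then verify inductively that the Hopf ideal $W$ is annihilated. The only organizational difference is that you package the inductive step as the single convolution identity $\tilde\pi_j*\tilde\pi_{3-j}=\varepsilon_T$ (checked on generators, then propagated since both sides are algebra maps), whereas the paper carries out the induction directly on the antipode-type generators of $W$, showing that vanishing on one type at level $i+1$ reduces to vanishing on the other type at level $i$; these are equivalent computations. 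One minor point: you invoke a ``bialgebra structure'' on $T$, but your argument only uses that $\Delta:T\to T\otimes T$ and $\varepsilon:T\to\kk$ are algebra maps, which is exactly what the Takeuchi construction provides---so no coassociativity check is actually needed.
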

\begin{proof}
In view of \Cref{thm:twistingpair}, it suffices to show that any two algebra homomorphisms $\pi_1,\pi_2: B\to \kk$ satisfying $(\pi_2\otimes \pi_1)\circ \Delta=(\pi_1\otimes \pi_2)\circ \Delta=\varepsilon$ can be uniquely extended to algebra homomorphisms $\mathcal H(\pi_1),\mathcal H(\pi_2): \mathcal H(B)\to \kk$ satisfying $(\mathcal H(\pi_2)\otimes \mathcal H(\pi_1))\circ \Delta=(\mathcal H(\pi_1)\otimes \mathcal H(\pi_2))\circ \Delta=\varepsilon$. 

Recall the explicit construction of $\mc{H}(B)$ given above as $\mathcal H(B)=T/W$, where $B=\kk\langle V\rangle/(R)$, $T=\kk\langle \oplus_{i\ge 0} V^{(i)}\rangle$, and $W$ is an ideal generated by \eqref{eq:RHB}. In particular, the antipode of $\mathcal H(B)$ is induced by the anti-algebra map $S$ on $T$ given by $S(V^{(i)})=V^{(i+1)}$ for all $i\ge 0$.  

For $j \in\{1,2\}$, we first lift $\pi_j: B\to \kk$ to an algebra homomorphism $\widetilde{\pi_j}: \kk\langle V\rangle\to \kk$. Note that any lifting of $\pi_j$ from $B$ to $\mathcal H(B)$ is determined by the lifting of $\widetilde{\pi_j}$ from $\kk\langle V\rangle$ to $T$. The latter lifting is unique if we require that 
\begin{align}\label{eq:ext}
  \widetilde{\pi_j}=\widetilde{\pi_{j+1}}\circ S, 
\end{align} 
where the subscripts for $\pi_j$'s are indexed as $1,2$ (modulo $2$). We still denote such a lifting by $\widetilde{\pi_j}: T\to \kk$. 

It remains to show that $\widetilde{\pi_j}(W)=0$ in view of  \eqref{eq:RHB}.  Since the relation space $R\subseteq \kk\langle V\rangle$ vanishes under $\widetilde{\pi_j}$, one can show inductively that $\widetilde{\pi_j}(S^i(R))=0$ for all $i\ge 0$. Moreover, we show inductively that 
\[\widetilde{\pi_j}\left((m \circ (\id \otimes S) \circ \Delta-u \circ \varepsilon)(V^{(i)})\right)=\widetilde{\pi_j} \left((m\circ (S\otimes \id) \circ \Delta-u \circ \varepsilon)(V^{(i)})\right)=0\] 
for all $i\ge 0$. When $i=0$, we have
\[
\begin{aligned}
    \widetilde{\pi_j}\left((m \circ (\id \otimes S) \circ \Delta-u \circ \varepsilon)(V)\right)&=(m \circ (\widetilde{\pi_j}\otimes (\widetilde{\pi_j} \circ S)) \circ \Delta-\widetilde{\pi_j}\circ u \circ \varepsilon)(V)\\
    &=(m \circ (\widetilde{\pi_j}\otimes \widetilde{\pi_{j+1}}) \circ \Delta-\pi_j \circ u \circ \varepsilon)(V)\\
    &=(m \circ (\widetilde{\pi_j}\otimes \widetilde{\pi_{j+1}}) \circ \Delta-u \circ \varepsilon)(V)\\
    &=(m \circ (\pi_j\otimes\pi_{j+1}) \circ \Delta-u \circ \varepsilon)(V)\\
    &=(u\circ \varepsilon-u \circ \varepsilon)(V) = 0.
\end{aligned}
\]
In the second to last equation above, we use our assumption $(\pi_j\otimes \pi_{j+1})\circ \Delta=\varepsilon$. By induction, assume the statements hold for $i$, we have
\begin{align*}
    \widetilde{\pi_j}\left((m \circ (\id \otimes S) \circ \Delta-u \circ \varepsilon)(V^{(i+1)})\right)&=\widetilde{\pi_j} \left( (m\circ (\id \otimes S) \circ \Delta-u \circ \varepsilon)(S(V^{(i)}))\right)\\
    &=\widetilde{\pi_j}\left( (m \circ (\id \otimes S) \circ (S\otimes S) \circ S_{(12)} \circ \Delta-u \circ \varepsilon \circ S)(V^{(i)})\right)\\
    &=\widetilde{\pi_j}\left((m \circ (S\otimes S) \circ S_{(12)} \circ (S\otimes \id) \circ \Delta-S \circ u \circ \varepsilon)(V^{(i)})\right)\\
      &=(\widetilde{\pi_j} \circ S) \left((m \circ (S\otimes \id) \circ \Delta-u \circ \varepsilon)(V^{(i)})\right)\\
    &=\widetilde{\pi_{j+1}}\left((m \circ (S\otimes \id) \circ \Delta-u \circ \varepsilon)(V^{(i)}) \right)=0.
\end{align*}
Similarly, one can show that $\widetilde{\pi_j}\left((m \circ (S\otimes \id) \circ \Delta-u \circ \varepsilon)(V^{(i+1)})\right)=0$. Hence $\widetilde{\pi_j}(W)=0$ and it yields a well-defined algebra homomorphism $\mathcal H(\pi_j): \mathcal H(B)=T/W\to \kk$. By our construction, it is clear that $(\mathcal H(\pi_1),\mathcal H(\pi_2))$ are unique extensions of $(\pi_1,\pi_2)$ satisfying $(\mathcal H(\pi_2)\otimes \mathcal H(\pi_1))\circ \Delta=(\mathcal H(\pi_1)\otimes \mathcal H(\pi_2))\circ \Delta=\varepsilon$ and every such pair of $\mathcal H(B)$ arises in this way. 
\end{proof}

\begin{remark}\label{remark:twisting}
We wish to consider the Zhang twist of $\mathcal H(B)$ by $\mathcal H(\phi_1)\circ \mathcal H(\phi_2)$ in \Cref{thm:Zhang and 2-cocycle twists}. Before doing so, we remark the following from \Cref{cor:lifting}:
\begin{enumerate}
    \item For the twisting pair $(\mathcal H(\phi_1),\mathcal H(\phi_2))$ of $\mathcal H(B)$, the composition $\mathcal H(\phi_1\circ \phi_2)=\mathcal H(\phi_1)\circ \mathcal H(\phi_2)$ is a Hopf algebra automorphism of $\mathcal H(B)$.
    \item If $B$ satisfies the twisting conditions, then its Hopf envelope $\mathcal H( B)$ also satisfies the twisting conditions. So any twisting pair $(\phi_1,\phi_2)$ of $B$ extends uniquely to $\mathcal H( B)$ preserving its algebra grading.
\end{enumerate}
\end{remark}

\subsection{Zhang twists and liftings to Hopf envelopes}

The notion of a twist of a $\mathbb Z$-graded algebra was first introduced in \cite[Section 8]{ATV1991}. Using the more general notion of twisting systems, Zhang proved in \cite{Zhang1996} that for two $\mathbb N$-graded algebras $A$ and $B$ generated in degree one, $A$ is isomorphic to a twisted algebra of $B$ if and only if the graded module categories of $A$ and $B$ are equivalent. 

\begin{defn} 
\label{defn:Zhang-twist} 
Let $A$ be a $\mathbb {Z}$-graded algebra and $\phi$ be a graded automorphism of $A$. The \emph{right Zhang twist} $A^\phi$ of $A$ by $\phi$ is the algebra that coincides with $A$ as a graded $\kk$-vector space, with the twisted (or deformed) multiplication 
\[r *_\phi s = r \phi^{|r|}(s),
\,\, \text{for any homogeneous elements } r,s \in A. \]
Here we denote the grading degree of $r$ by $|r|$. Similarly, the \emph{left Zhang twist} $\prescript{\phi}{}{A}$ of $A$ by $\phi$ is defined with the twisted  multiplication 
\[r *_\phi s = \phi^{|s|}(r) s,
\,\, \text{for any homogeneous elements } r,s \in A. \]
When the context is clear, we may omit the subscript in the twisted multiplication $ *_\phi$. 
\end{defn} 

For $A$ and $\phi$ as in \Cref{defn:Zhang-twist}, the left and right Zhang twist of $A$ by $\phi$ is again a graded associative algebra \cite[Proposition 4.2]{Zhang1996}. Furthermore, there is a natural isomorphism $A^\phi \cong \prescript{\phi^{-1}}{}{A}$ given by the identity on sets \cite[Lemma 4.1]{Lecoutre-Sierra2019}. Zhang twists may be defined in the more general setting of an algebra graded by a semigroup by considering twisting systems \cite{Zhang1996}. In the case of a $\mathbb{Z}$-graded algebra, such a twisting system arises from a single algebra automorphism of the associated $\mathbb{Z}$-algebra \cite[Proposition 4.2]{Sierra2011}.

In this paper, we consider Zhang twists on bialgebras (resp.~Hopf algebras), but one should note that Zhang twists only deform the algebra structure. With this in mind, we let $B$ be a $\mathbb Z$-graded bialgebra (resp.~Hopf algebra). We say that an endomorphism $\phi\in {\rm End}_\kk(B)$ is a \emph{graded bialgebra automorphism} (resp.~\emph{graded Hopf algebra automorphism}) of $B$ if it is a bialgebra (resp. Hopf algebra) automorphism of $B$ while preserving the grading of $B$, that is, $\phi(B_n)\subseteq B_n$ for all $n\in \mathbb Z$. In what follows, we prove that the right Zhang twist $B^\phi$ of a graded bialgebra
(resp.~Hopf algebra) $B$ is again a graded bialgebra (resp.~Hopf algebra) under the twisting conditions (\textbf{T1})-(\textbf{T2}). Similar results can be proved for the left Zhang twist $^\phi B$ by using the fact that $\prescript{\phi}{}{B} \cong B^{\phi^{-1}}$.

\begin{proposition}
\label{zhang-twist-of-bialgebra}
Suppose $B$ is a bialgebra satisfying the twisting conditions (\textbf{T1})-(\textbf{T2}). Then we have the following.
\begin{enumerate}
 \item For any graded bialgebra automorphism $\phi$ of $B$, the Zhang twist $B^\phi$ together with the coalgebra structure of $B$ is again a bialgebra satisfying the twisting conditions (\textbf{T1})-(\textbf{T2}). 
 \item If in addition $B$ is a Hopf algebra with antipode $S$ and $\phi$ is any graded Hopf algebra automorphism of $B$, then $B^\phi$ is again a Hopf algebra satisfying the twisting conditions (\textbf{T1})-(\textbf{T2}), whose antipode is given by $S^\phi(r)=\phi^{-|r|}S(r)$, for all homogeneous elements $r\in B^{\phi}$.
\end{enumerate}
\end{proposition}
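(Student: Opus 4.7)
The plan is to verify the bialgebra (resp.~Hopf algebra) axioms directly for $B^\phi$ equipped with the coalgebra structure $(\Delta,\varepsilon)$ inherited from $B$. Condition (\textbf{T1}) for $B^\phi$ is immediate from the assumption that $\phi$ is graded, since $|r *_\phi s| = |r\,\phi^{|r|}(s)| = |r|+|s|$, and (\textbf{T2}) holds automatically because the comultiplication is unchanged. Associativity of $*_\phi$ is the standard Zhang twist fact already recorded after \Cref{defn:Zhang-twist}. So the real content is (i) showing that $\Delta$ and $\varepsilon$ remain algebra homomorphisms with respect to $*_\phi$, and (ii) verifying that the proposed antipode $S^\phi(r) = \phi^{-|r|}S(r)$ satisfies the antipode axioms.

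For part (1), compatibility with $\varepsilon$ is immediate since $\varepsilon \circ \phi = \varepsilon$ (as $\phi$ is a bialgebra automorphism), yielding $\varepsilon(r *_\phi s) = \varepsilon(r)\varepsilon(s)$. For $\Delta$, the crucial step is to write, for homogeneous $r, s$,
\[
\Delta(r *_\phi s) \;=\; \Delta(r)\,\Delta(\phi^{|r|}(s)) \;=\; \Delta(r)\,(\phi \otimes \phi)^{|r|}(\Delta(s)),
\]
using that $\phi$ is a coalgebra map. Writing $\Delta(r) = \sum r_1 \otimes r_2$, condition (\textbf{T2}) forces $r_1, r_2 \in B_{|r|}$, so $\phi^{|r|} = \phi^{|r_1|} = \phi^{|r_2|}$ on the respective components. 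The right-hand side then reassembles as $\sum (r_1 *_\phi s_1) \otimes (r_2 *_\phi s_2)$, which is precisely $\Delta(r)\cdot \Delta(s)$ in the tensor product algebra $B^\phi \otimes B^\phi$ whose product is $(a \otimes b)(c \otimes d) = (a *_\phi c) \otimes (b *_\phi d)$. This is the multiplicativity of $\Delta$.

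For part (2), the map $S^\phi(r) := \phi^{-|r|}S(r)$ is well-defined on homogeneous components: by \Cref{cor:T3}, condition (\textbf{T3}) gives $S(r) \in B_{-|r|}$, and since $\phi$ preserves the grading, $S^\phi(r) \in B_{-|r|}$. To check $m_\phi \circ (S^\phi \otimes \id) \circ \Delta = u \circ \varepsilon$, compute for homogeneous $r$ with $\Delta(r) = \sum r_1 \otimes r_2$ (and $r_1, r_2 \in B_{|r|}$ by (\textbf{T2})):
\[
\sum S^\phi(r_1) *_\phi r_2 \;=\; \sum \phi^{-|r|}(S(r_1))\cdot \phi^{-|r|}(r_2) \;=\; \phi^{-|r|}\!\left(\sum S(r_1)r_2\right) \;=\; \phi^{-|r|}(u\varepsilon(r)) \;=\; u\varepsilon(r),
\]
using that $\phi$ is an algebra map fixing $1$. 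The symmetric identity $m_\phi \circ (\id \otimes S^\phi) \circ \Delta = u \circ \varepsilon$ follows by the analogous computation, where the twist exponents cancel directly as $\phi^{|r_1|}\phi^{-|r_2|} = \id$ on homogeneous $r$. The unifying mechanism throughout is (\textbf{T2}): because $\Delta(B_n) \subseteq B_n \otimes B_n$, the twist $\phi^{|r|}$ acts uniformly on both Sweedler components of $\Delta(r)$, which is exactly what makes the Zhang twist compatible with the coproduct and with the original antipode. The main (mild) subtlety to track is that the algebra structure on $B^\phi \otimes B^\phi$ relevant here is the honest tensor product of $B^\phi$ with itself, rather than some Zhang twist of $B \otimes B$ with respect to a total grading.
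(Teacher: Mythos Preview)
Your proposal is correct and follows essentially the same approach as the paper: both verify directly that $\Delta$ and $\varepsilon$ are algebra maps for $*_\phi$ using $(\phi\otimes\phi)\circ\Delta=\Delta\circ\phi$, $\varepsilon\circ\phi=\varepsilon$, and (\textbf{T2}), and then check the antipode axioms for $S^\phi$ using (\textbf{T2}) and (\textbf{T3}). Your verification of $\sum S^\phi(r_1)*_\phi r_2=\varepsilon(r)$ is in fact slightly cleaner than the paper's---you immediately note that $|S^\phi(r_1)|=-|r|$ and factor out $\phi^{-|r|}$, whereas the paper reaches the same conclusion through a more roundabout manipulation---but the underlying argument is identical.
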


\begin{proof}
(1): By definition, $B^\phi$ is a graded algebra with the natural grading $B^\phi = \bigoplus_{i \in \mathbb{Z}} B_i$. Endow $B^\phi$ with the same coalgebra structure as $B$. To show that $B^\phi$ is a bialgebra, it suffices to show that $\Delta$ and $\varepsilon$ are algebra maps with respect to the deformed multiplication $*$.
Since $\phi$ is a bialgebra automorphism of $B$ preserving its degrees, we have 
\begin{align}
\label{eq:biauto}
    (\phi\otimes \phi) \circ \Delta =\Delta \circ \phi \qquad
    \textnormal{and} \qquad \varepsilon \circ \phi =\varepsilon. 
\end{align}
Thus, for any homogeneous elements $r,s \in B$, 
\begin{align*}
 \Delta(r*s) &= \Delta(r\phi^{|r|}(s)) = \Delta(r)\Delta(\phi^{|r|}(s)) = \sum r_1\phi^{|r|}(s)_1\otimes r_2\phi^{|r|}(s)_2 \\
 \overset{\eqref{eq:biauto},\, \textnormal{(\textbf{T2})}}&{=} \sum r_1\phi^{|r_1|}(s_1)\otimes r_2\phi^{|r_2|}(s_2) = \sum r_1 * s_1 \otimes r_2 * s_2 = \Delta(r)*\Delta(s),
\end{align*}
and
\[
 \varepsilon(r*s)= \varepsilon(r\phi^{|r|}(s))= \varepsilon(r)\varepsilon(\phi^{|r|}(s))
 \overset{\eqref{eq:biauto}}{=}\varepsilon(r)\varepsilon(s)=\varepsilon(r)*\varepsilon(s).
\]
Therefore, $B^\phi$ is a bialgebra as desired and still satisfies the twisting conditions (\textbf{T1})-(\textbf{T2}).

(2): Suppose $B$ is a Hopf algebra satisfying the twisting conditions. By (1), $B^{\phi}$ is a bialgebra satisfying the twisting conditions (\textbf{T1})-(\textbf{T2}). It remains to show that $B^{\phi}$ has an antipode.  
We define a map $S^\phi: B^{\phi} \to B^{\phi}$ by $S^\phi(r)=\phi^{-|r|}(S(r))$ for all homogeneous element $r\in B^{\phi}$, where $S$ is the original antipode map of $B$. Observe that $S^\phi$ is $\kk$-linear. For any $r \in B^\phi$, we have
\begin{align*}
\sum S^\phi(r_1)*r_2 &= \sum \phi^{-|r_1|}(S(r_1))*r_2 = \sum \phi^{-|r_1|}(S(r_1))\phi^{|S(r_1)|}(r_2) \\&= \sum \phi^{|S(r_1)|}(\phi^{-|S(r_1)|} \circ \phi^{-|r_1|}(S(r_1))r_2)
 \overset{\textnormal{(\textbf{T3})}}{=} \sum\phi^{|S(r_1)|}(\varepsilon(r)) =\varepsilon(r), 
\end{align*}
and
\[
\sum r_1*S^\phi(r_2)= \sum r_1*\phi^{-|r_2|}(S(r_2))= \sum r_1\phi^{|r_1|}(\phi^{-|r_2|}(S(r_2))) \overset{\textnormal{(\textbf{T2})}}{=}  \sum r_1S(r_2) =\varepsilon(r).
\]
Hence, $S^\phi$ is the antipode of $B^{\phi}$, so that $(B^{\phi},\, *\,, \Delta,\, S^\phi)$ is a Hopf algebra.   
\end{proof}

Let $H$ be a Hopf algebra satisfying the twisting conditions (\textbf{T1})-(\textbf{T2}). We denote by ${\rm Gr}$-$H$ the category of graded left $H$-modules.  
One can check that ${\rm Gr}$-$H$ is a monoidal category, with the \emph{Hadamard product $\overline{\otimes}$} of two graded $H$-modules $M=\bigoplus_{n\in \mathbb Z}M_n$ and $N=\bigoplus_{n\in \mathbb Z}N_n$ defined by
\[
\left(M\, \overline{\otimes}\, N\right)_n:=M_n\otimes N_n,
\]
where the left $H$-module structure is given via the comultiplication $\Delta$. The unit object in ${\rm Gr}$-$H$ is $\mathbb{1}=\bigoplus_{n\in \mathbb Z} \kk$, a direct sum of copies of $\kk$. In particular, if $M$ is locally finite, then $M$ has a dual given by
\[
M^*=\bigoplus_{n\in \mathbb Z}(M_n)^*=\bigoplus_{n\in \mathbb Z}  {\rm Hom}_{\kk}(M_n,\kk),
\]
where the left $H$-action on the $\kk$-dual $(M_n)^*$ is given by $(h \cdot f)(x)=f(S(h) \cdot x)$, for any $x \in M_{n+m}$, $f\in (M_n)^*$ and $h\in H_m$. Moreover, the evaluation map ${\rm ev}: M^*\otimes M\to \mathbb{1}$ and coevaluation map ${\rm coev}: \mathbb{1}\to M\otimes M^*$ are defined component-wise using natural maps $(M_n)^*\otimes M_n\to \kk$ and $\kk\to M_n\otimes (M_n)^*$. See \cite{EGNO} for more background on tensor categories. 

For any graded Hopf algebra automorphism $\phi$ of $H$, by a left-version of \Cref{zhang-twist-of-bialgebra} (2) the left Zhang twist $\!^\phi H$ is again a Hopf algebra satisfying the twisting conditions (\textbf{T1})-(\textbf{T2}). Its graded module category ${\rm Gr}$-$\,^\phi H$ is a monoidal category as well. Our next result extends the well-known equivalence of graded module categories between Zhang twists, given by Zhang in \cite{Zhang1996}, to their monoidal versions. 

\begin{proposition}
Let $H$ be a Hopf algebra satisfying the twisting conditions (\textbf{T1})-(\textbf{T2}). For any graded Hopf algebra automorphism $\phi$ of $H$, the monoidal categories ${\rm Gr}$-$H$ and ${\rm Gr}$-$\,^\phi H$ are equivalent.
\end{proposition}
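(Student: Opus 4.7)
The plan is to lift Zhang's equivalence of graded module categories to a monoidal equivalence by carefully tracking how the Hadamard product interacts with the twisting. I would define a functor $F:{\rm Gr}$-$H\to {\rm Gr}$-$\,^\phi H$ that sends a graded left $H$-module $M=\bigoplus_{n\in\mathbb Z}M_n$ to the same graded vector space equipped with the new action $r\cdot_\phi m := \phi^{|m|}(r)\cdot m$ for homogeneous $r\in\,^\phi H$ and $m\in M$, and that acts as the identity on morphisms. A direct computation using the formula $r *_\phi s = \phi^{|s|}(r)s$ for the left Zhang twist shows this action is associative, and the argument in \cite{Zhang1996} then shows $F$ is a $\kk$-linear equivalence of categories.

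The central task is to equip $F$ with a monoidal structure. I would take the constraint $J_{M,N}:F(M)\,\overline{\otimes}\,F(N)\to F(M\,\overline{\otimes}\,N)$ to be the identity of the underlying graded vector space in each degree. To verify this is a $\,^\phi H$-module map, fix a homogeneous element $r\in\,^\phi H$ of degree $k$ and a simple tensor $m\otimes n\in M_d\otimes N_d=(M\,\overline{\otimes}\,N)_d$. By condition (\textbf{T2}), $\Delta(r)\in H_k\otimes H_k$, so the action on $F(M)\,\overline{\otimes}\,F(N)$ produces $\sum \phi^d(r_1)\cdot m\otimes \phi^d(r_2)\cdot n$, while the action on $F(M\,\overline{\otimes}\,N)$ produces $\sum \phi^d(r)_1\cdot m\otimes \phi^d(r)_2\cdot n$. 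These coincide because $\phi$ is a Hopf algebra map, whence $\Delta\circ \phi^d=(\phi^d\otimes \phi^d)\circ \Delta$. The unit constraint $F(\mathbb 1)\to \mathbb 1$ is again the identity and is $\,^\phi H$-linear thanks to $\varepsilon\circ \phi=\varepsilon$.

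The pentagon and triangle coherence axioms then reduce to identities of linear maps on the underlying graded vector spaces, and are tautological since $J$ is the identity in every degree. The main conceptual obstacle---and the reason the statement is not vacuous---is that the Hadamard product forces the two tensor factors to carry the same grading degree $d$, which is precisely what synchronizes the twisting exponents $\phi^{|m|}$ and $\phi^{|n|}$ to a common $\phi^d$ that can then be split across $\Delta$ using the graded coalgebra property. Without this degree-matching feature of $\,\overline{\otimes}\,$, no such monoidal enhancement would exist, and one would be forced to work with the ordinary tensor product of $H$-modules where the argument breaks down.
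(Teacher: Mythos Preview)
Your proposal is correct and follows essentially the same approach as the paper: both define the equivalence $F$ as the identity on underlying graded vector spaces with the twisted action $h\cdot_\phi m=\phi^{|m|}(h)\cdot m$, take the monoidal constraint to be the identity, and verify compatibility via the computation $\Delta\circ\phi^d=(\phi^d\otimes\phi^d)\circ\Delta$ applied to elements in a common degree $d$ of the Hadamard product. Your treatment is in fact slightly more thorough, as you explicitly address the unit constraint and the coherence axioms, which the paper leaves implicit.
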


\begin{proof}
By \cite[Corollary 8.5]{ATV1991}, ${\rm Gr}$-$H$ and ${\rm Gr}$-$\,^\phi H$ are equivalent as graded module categories via the equivalence $F: {\rm Gr}\text{-}H \to {\rm Gr}\text{-}\,^\phi H$ which is defined in the following way. Let $M=\bigoplus_{n\in \mathbb Z}M_n$ be a graded $H$-module. Then $F(M):=M$ as a graded vector space, and the new twisted $H$-action is given by $h\cdot_\phi m=\phi^{|m|}(h) \cdot m$, for any $m\in M$ and $h\in H$. It remains to verify that $F$ preserves the monoidal structure, that is, $F(M\overline{\otimes} N)\cong F(M)\overline{\otimes} F(N)$ in ${\rm Gr}\text{-}\,^\phi H$ for two graded $H$-modules $M$ and $N$. For any $m\in M_t$, $n\in N_t$, where $t \in \mathbb Z$, and $h\in H$, we have
\[
\begin{aligned}
 h\cdot_\phi(m\otimes n) &= \phi^{t}(h)\cdot (m\otimes n) = \Delta(\phi^{t}(h))\cdot (m\otimes n) = (\phi^{t}\otimes \phi^t)(\Delta(h)) \cdot (m\otimes n)\\
 &= \sum (\phi^t(h_1) \cdot m)\otimes (\phi^t(h_2) \cdot n) = \sum ( h_1\cdot_\phi m)\otimes ( h_2\cdot_\phi n).
\end{aligned}
\]
Then it is clear to see the equivalence functor $F: {\rm Gr}\text{-}H \to {\rm Gr}\text{-}\,^\phi H$ extends to a monoidal equivalence. 
\end{proof}

Let $i_B: B \to \mc{H}(B)$ be the natural bialgebra map. For any graded bialgebra automorphism $\psi$ of $B$,  the universal property of the Hopf envelope implies that there is a unique graded Hopf algebra automorphism of $\mathcal H(B)$, denoted by $\mathcal H(\psi)$, such that the following diagram commutes:
\begin{equation}\label{eq:7a}
\xymatrix{
B\ar[r]^-{i_B}\ar[d]_-{\psi} & \mathcal H(B)\ar[d]^-{\mathcal H(\psi)}\\
B\ar[r]^-{i_B} & \mathcal H(B).
}
\end{equation}
As a consequence, we can show that  
\begin{equation}\label{eq:liftingbiauto}
    \mathcal H(-): {\rm Aut}_{\text{grbi}}(B)\to {\rm Aut}_{\text{grHopf}}(\mathcal H(B))
\end{equation} 
is a group homomorphism from the group of graded bialgebra automorphisms of $B$ to the group of graded Hopf algebra automorphisms of $\mathcal{H}(B)$. Since $\psi$ and $\mathcal H(\psi)$ are both graded automorphisms of their respective algebras, we may consider the Zhang twists $B^\psi$ and $\mc{H}(B)^{\mathcal H(\psi)}$. By  \Cref{zhang-twist-of-bialgebra} and \Cref{t1-t3-hopf-env}, $B^\psi$ is a bialgebra and $\mc H(B)^{\mc H(\psi)}$ is a Hopf algebra, respectively.

\begin{lemma}
\label{i-map-both}
There exists a bialgebra map $i_\psi: B^\psi \to \mc{H}(B)^{\mc{H}(\psi)}$ that is equal to $i_B: B \to \mc{H}(B)$ as a map of vector spaces.
\end{lemma}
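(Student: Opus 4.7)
The plan is to take $i_\psi$ to coincide with $i_B$ as a $\kk$-linear map and then verify separately that it respects the coalgebra structure and the twisted multiplication. Since a Zhang twist only deforms the multiplication, the coalgebras underlying $B^\psi$ and $B$ (respectively $\mathcal{H}(B)^{\mathcal{H}(\psi)}$ and $\mathcal{H}(B)$) coincide on the nose. Hence $i_\psi$ automatically inherits from $i_B$ the property of being a counital coalgebra map, and it remains only to check multiplicativity with respect to $*_\psi$ on the source and $*_{\mathcal{H}(\psi)}$ on the target.

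The two inputs needed for this multiplicative check are both already available in the excerpt. First, by \Cref{t1-t3-hopf-env}~(2), the map $i_B$ preserves the algebra grading, so for a homogeneous $r \in B$ we have $|i_B(r)| = |r|$ in $\mathcal{H}(B)$. Second, by the defining commutative diagram \eqref{eq:7a} for $\mathcal{H}(\psi)$, we have $\mathcal{H}(\psi) \circ i_B = i_B \circ \psi$, and iterating gives $\mathcal{H}(\psi)^n \circ i_B = i_B \circ \psi^n$ for every $n \in \mathbb{Z}$.

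Combining these, for homogeneous $r, s \in B^\psi$ one computes
\[
i_\psi(r *_\psi s) \;=\; i_B\bigl(r\,\psi^{|r|}(s)\bigr) \;=\; i_B(r)\,i_B\bigl(\psi^{|r|}(s)\bigr) \;=\; i_B(r)\,\mathcal{H}(\psi)^{|i_B(r)|}\bigl(i_B(s)\bigr) \;=\; i_\psi(r) *_{\mathcal{H}(\psi)} i_\psi(s),
\]
using that $i_B$ is an algebra map in the middle equality. This verifies that $i_\psi$ is an algebra map for the twisted products, and together with the preceding paragraph establishes that it is a bialgebra map.

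There is no real obstacle here; the statement is essentially a compatibility assertion between two constructions (Hopf envelope and Zhang twist), and the key conceptual point is merely that graded-automorphism liftings through the Hopf envelope commute with taking Zhang twists because Zhang twisting leaves the coalgebra (hence the universal property used to define $\mathcal{H}(\psi)$) untouched and only re-uses the same graded automorphism in the deformed product. The only subtlety worth flagging is to keep straight that $|i_B(r)|$ really does equal $|r|$, which is exactly the content of \Cref{t1-t3-hopf-env}~(2) and is why the exponents of $\psi$ and $\mathcal{H}(\psi)$ match in the computation above.
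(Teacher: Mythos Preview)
Your proof is correct and follows essentially the same approach as the paper: observe that the coalgebra structures are unchanged by Zhang twisting so $i_B$ is automatically a coalgebra map, then verify multiplicativity via the identical chain $i_B(r\,\psi^{|r|}(s)) = i_B(r)\,i_B(\psi^{|r|}(s)) = i_B(r)\,\mathcal{H}(\psi)^{|i_B(r)|}(i_B(s))$ using that $i_B$ is graded and diagram~\eqref{eq:7a}. The paper's proof is just a terser version of what you wrote.
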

\begin{proof}
By our constructions, it is clear that $B^\psi=B$ and  $\mc{H}(B)^{\mathcal H(\psi)}=\mc{H}(B)$ as coalgebras. Thus the original bialgebra map $i_B: B\to \mc{H}(B)$ induces a coalgebra map $i_{\psi}:B^\psi \to \mc{H}(B)^{\mathcal H(\psi)}$ preserving the gradings. It remains to be shown that $i_{\psi}$ is also a graded algebra map, which follows from  
\begin{align*}
    i_{\psi}(x*_\psi y)=i_B(x\psi^{|x|}(y))\overset{\eqref{eq:7a}}{=}i_B(x)i_B(\psi^{|x|}(y))=i_B(x)\mathcal H(\psi)^{|i_B(x)|}(i_B(y))=i_{\psi}(x)*_{\mathcal H(\psi)} i_{\psi}(y),
\end{align*}
for homogeneous elements $x, y \in B^\psi$, and the fact that $i_B$ preserves the identity. 
\end{proof}

Our next goal is to show that $\mc{H}(B)^{\mc{H}(\psi)}$ is the Hopf envelope of $B^\psi$. For simplicity, we write 
\[\theta:=\mc{H}(\psi): \mc{H}(B)\to \mc{H}(B).\]
Denote the Hopf envelope of $B^\psi$ by $\mc{H}(B^\psi)$, together with the bialgebra map $i_{B^{\psi}}: B^\psi \to \mc{H}(B^\psi)$. By \Cref{i-map-both} and the universal property of $\mc{H}(B^\psi)$, we have a unique Hopf algebra map $f: \mc{H}(B^\psi) \to \mc{H}(B)^\theta$ making the following diagram commute:
\begin{equation}
\label{eq:hopfenvelope}
\xymatrix{
B^\psi \ar[r]^-{i_{B^\psi}} \ar[dr]_-{i_\psi} & \mc{H}\left( B^\psi\right) \ar[d]^-f\\
& \mc{H}\left( B\right)^\theta.
}
\end{equation}
Note that $\psi$ and $\psi^{-1}: B \to B$ are also graded algebra automorphisms of $B^\psi$ since 
\begin{align*}
\psi (x *_\psi y)&= \psi( x\psi^{|x|} (y))= \psi(x)\psi^{|x|+1} (y)= \psi(x) \psi^{|\psi(x)|+1}(y)= \psi(x) *_\psi \psi(y),
\end{align*}
for any homogeneous elements $x,y$ of $B$. Using this, we obtain from the universal property of $\mc{H}(B^\psi)$ a Hopf automorphism $\hat \theta:=\mathcal H(\psi^{-1}): \mc{H}(B^\psi) \to \mc{H}(B^\psi)$ making the following diagram commute:
\begin{equation}
    \label{eq:longtriangle}
\xymatrix{
B^\psi \ar[r]^-{\psi^{-1}} \ar[drr]_-{i_{B^\psi}} & B^\psi \ar[r]^-{i_{B^\psi}} & \mc{H} \left( B^\psi\right) \\
&& \mc{H}\left( B^\psi\right) \ar[u]_-{\hat\theta}
.}
\end{equation}

\begin{lemma}
\label{fs-and-thetas} 
Let $B, \psi, \theta, \hat \theta, f$ be as in the above discussion. As Hopf algebra maps $\mc{H}(B^\psi) \to \mc{H}(B)^\theta$, we have $\theta \circ f \circ \hat \theta = f$. 
\end{lemma}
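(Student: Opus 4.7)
The plan is to use the universal property of the Hopf envelope $\mathcal{H}(B^\psi)$: any bialgebra map from $B^\psi$ to a Hopf algebra $H$ factors uniquely through a Hopf algebra map $\mathcal{H}(B^\psi) \to H$. Thus to check that the two Hopf algebra maps $\theta \circ f \circ \hat\theta$ and $f$ from $\mathcal{H}(B^\psi)$ to $\mathcal{H}(B)^\theta$ agree, it suffices to precompose both with $i_{B^\psi}$ and verify the resulting bialgebra maps $B^\psi \to \mathcal{H}(B)^\theta$ coincide.

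The first thing I would confirm is that $\theta \circ f \circ \hat\theta$ really is a Hopf algebra map $\mathcal{H}(B^\psi) \to \mathcal{H}(B)^\theta$. The map $\hat\theta$ is a Hopf automorphism of $\mathcal{H}(B^\psi)$ by construction, $f$ is a Hopf algebra map by the universal property applied in \eqref{eq:hopfenvelope}, so the only new point is that $\theta$, originally a Hopf automorphism of $\mathcal{H}(B)$, is also a Hopf automorphism of the twist $\mathcal{H}(B)^\theta$. For the multiplicativity one computes, for homogeneous $x,y$, $\theta(x *_\theta y) = \theta(x\theta^{|x|}(y)) = \theta(x)\theta^{|\theta(x)|}(\theta(y)) = \theta(x) *_\theta \theta(y)$; the coalgebra structure is unchanged, and the formula $S^\theta(r) = \theta^{-|r|}S(r)$ from \Cref{zhang-twist-of-bialgebra}(2) together with $\theta \circ S = S\circ \theta$ on $\mathcal{H}(B)$ shows $\theta$ commutes with the twisted antipode.

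Next I would compute the precomposition using three identities that are all built into the constructions:
\[
\hat\theta \circ i_{B^\psi} = i_{B^\psi}\circ \psi^{-1},\qquad f\circ i_{B^\psi} = i_\psi,\qquad \theta\circ i_B = i_B\circ \psi,
\]
the first being the defining diagram of $\hat\theta$, the second the defining diagram \eqref{eq:hopfenvelope} of $f$, and the third the definition \eqref{eq:7a} of $\theta = \mathcal{H}(\psi)$. Since $i_\psi = i_B$ as set-theoretic maps by \Cref{i-map-both}, the third identity rewrites as $\theta \circ i_\psi = i_\psi \circ \psi$. Chaining these together yields
\[
(\theta\circ f \circ \hat\theta)\circ i_{B^\psi} = \theta\circ f\circ i_{B^\psi}\circ \psi^{-1} = \theta\circ i_\psi \circ \psi^{-1} = i_\psi\circ \psi\circ \psi^{-1} = i_\psi = f\circ i_{B^\psi}.
\]
The uniqueness clause in the universal property of $\mathcal{H}(B^\psi)$ applied to the bialgebra map $i_\psi: B^\psi \to \mathcal{H}(B)^\theta$ then forces $\theta\circ f\circ \hat\theta = f$.

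The main obstacle is really only the bookkeeping around the dual roles of $\theta$ (as a Hopf automorphism of both $\mathcal{H}(B)$ and its twist $\mathcal{H}(B)^\theta$) and $i_\psi$ versus $i_B$ (identical as vector space maps but bialgebra maps into different bialgebras); once these are disentangled, the statement reduces to a one-line diagram chase invoking the uniqueness part of the universal property of the Hopf envelope.
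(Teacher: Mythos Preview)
Your proof is correct and follows essentially the same approach as the paper: both arguments precompose with $i_{B^\psi}$, use the defining diagrams of $\hat\theta$, $f$, and $\theta$ to reduce $(\theta\circ f\circ\hat\theta)\circ i_{B^\psi}$ to $i_\psi$, and then invoke the uniqueness clause of the universal property of the Hopf envelope. Your write-up is slightly more explicit in verifying that $\theta$ is a Hopf automorphism of the twist $\mathcal{H}(B)^\theta$, which the paper leaves implicit, but the core argument is identical.
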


\begin{proof}
The following diagram commutes by the definition and universal property of Hopf envelopes:
\[
\xymatrix{
B^\psi \ar[r]^-{\psi^{-1}} \ar[d]_-{i_{B^\psi}} & B^\psi \ar[r]^-\psi \ar[d]_-{i_{B^\psi}} \ar[rd]^-{i_\psi} & B^\psi \ar[rd]^-{i_\psi} \\
\mc{H} \left( B^\psi\right) \ar[r]_-{\hat\theta} & \mc{H} \left( B^\psi\right) \ar[r]_-f & \mc{H}\left( B\right)^\theta \ar[r]_-\theta & \mc{H} \left( B\right)^\theta.}
\]
The triangle and the leftmost square follow from \eqref{eq:hopfenvelope} and \eqref{eq:longtriangle}, respectively.
The rightmost square commutes, since it is equal (as a diagram of vector spaces and linear maps) to \eqref{eq:7a}.
One can see that $\theta \circ  f \circ \hat{\theta} \circ i_{B^{\psi}}=i_{\psi} \circ \psi \circ \psi^{-1}=i_{\psi}$. By the universal property of Hopf envelopes in \eqref{eq:hopfenvelope}, we have $\theta \circ  f \circ \hat{\theta}=f$.
\end{proof}

\begin{Cor}
\label{f-well-def}
The map $f$  induces a Hopf algebra map $\mc{H}(B^\psi)^{\hat \theta}\to \mc{H}(B)$. 
\end{Cor}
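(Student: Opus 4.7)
The plan is to observe that $f$, when regarded purely as a map of graded vector spaces, is the same map whether we place the Zhang-twisted or the untwisted multiplications on its source and target. Since Zhang twists only deform the multiplication and leave the counit, comultiplication, and underlying graded vector space unchanged, defining $g \colon \mathcal H(B^\psi)^{\hat\theta} \to \mathcal H(B)$ by $g = f$ on underlying graded vector spaces is automatically a graded coalgebra map (because $f \colon \mathcal H(B^\psi) \to \mathcal H(B)^\theta$ already is one, and the coalgebra structures of $\mathcal H(B^\psi)^{\hat\theta}$ and $\mathcal H(B)$ coincide with those of $\mathcal H(B^\psi)$ and $\mathcal H(B)^\theta$ respectively). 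Moreover, $g$ trivially preserves the unit. So the whole content is to check that $g$ respects multiplication; once this is done, $g$ is a bialgebra map between Hopf algebras, hence automatically a Hopf algebra map.

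To verify multiplicativity, first rewrite \Cref{fs-and-thetas} in the equivalent form
\[
f\circ \hat\theta \;=\; \theta^{-1}\circ f,
\]
which then gives $f\circ \hat\theta^n = \theta^{-n}\circ f$ for every integer $n$. Now take homogeneous elements $x,y \in \mathcal H(B^\psi)^{\hat\theta}$, so that the twisted product is $x\ast_{\hat\theta} y = x\,\hat\theta^{|x|}(y)$, with the product on the right taken in $\mathcal H(B^\psi)$. Applying $f$ and using that $f \colon \mathcal H(B^\psi)\to \mathcal H(B)^{\theta}$ is an algebra map yields
\[
f(x\ast_{\hat\theta} y) \;=\; f(x)\ast_\theta f\bigl(\hat\theta^{|x|}(y)\bigr) \;=\; f(x)\,\theta^{|f(x)|}\!\bigl(f(\hat\theta^{|x|}(y))\bigr).
\]
Since $f$ is a graded map we have $|f(x)|=|x|$; substituting $f\circ \hat\theta^{|x|} = \theta^{-|x|}\circ f$ on the inner factor collapses the two powers of $\theta$ and produces $f(x\ast_{\hat\theta} y) = f(x)\,f(y)$ in $\mathcal H(B)$, as required.

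Thus $g$ is a graded bialgebra map $\mathcal H(B^\psi)^{\hat\theta}\to \mathcal H(B)$, and since both sides are Hopf algebras (the former by \Cref{zhang-twist-of-bialgebra}(2) applied to $\mathcal H(B^\psi)$ with the graded Hopf automorphism $\hat\theta$, which preserves the grading by \Cref{t1-t3-hopf-env}(1) combined with \Cref{remark:twisting}(2)), the antipode is automatically preserved. I do not anticipate a genuine obstacle here: the only thing one really needs is the relation from \Cref{fs-and-thetas}, and the grading-compatibility already established in the preceding lemmas guarantees that $|f(x)|=|x|$ so the exponents of $\theta$ cancel cleanly.
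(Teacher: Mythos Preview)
Your proposal is correct and follows essentially the same route as the paper's own proof: both use the identity $\theta\circ f\circ\hat\theta=f$ from \Cref{fs-and-thetas} (which you equivalently restate as $f\circ\hat\theta^n=\theta^{-n}\circ f$) together with $|f(x)|=|x|$ to collapse the $\theta$-powers in the computation of $f(x\ast_{\hat\theta} y)$. Your version simply spells out a bit more explicitly why the coalgebra structure and antipode are automatically preserved, but the multiplicativity check is identical.
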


\begin{proof}
We use $\cdot$ for the multiplications in both $\mc{H}(B^\psi)$ and $\mc{H}(B)$, and $*$ for the multiplications in $\mc{H}(B^\psi)^{\hat \theta}$ and $\mc{H}(B)^\theta$. Using \Cref{fs-and-thetas}, for homogeneous elements $x, y\in \mathcal H(B^{\psi})^{\hat{\theta}}$, we have
\[
f(x * y) = f(x \cdot \hat\theta^{|x|}(y))= f(x) * f \hat \theta^{|x|}(y)= f(x) \cdot \theta^{|f(x)|} f \hat \theta^{|x|} (y)= f(x) \cdot \theta^{|x|} f \hat \theta^{|x|} (y)=f(x) \cdot f(y). \qedhere
\]
\end{proof}

The following result can also be proved directly by using the explicit construction of the Hopf envelope. 

\begin{thm}
\label{f-iso}
Let $B$ be a bialgebra satisfying the twisting conditions (\textbf{T1})-(\textbf{T2}). For any graded bialgebra automorphism $\psi$ of $B$, we have the following Hopf algebra isomorphism: 
\[\mc{H}(B^\psi) \cong  \mc{H}(B)^{\mathcal H(\psi)}.\]
\end{thm}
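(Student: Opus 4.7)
The plan is to exhibit an explicit two-sided inverse to the Hopf algebra map $f: \mathcal{H}(B^\psi) \to \mathcal{H}(B)^{\mathcal{H}(\psi)}$ constructed above (equivalently, to its promotion $\tilde f: \mathcal{H}(B^\psi)^{\hat\theta} \to \mathcal{H}(B)$ from \Cref{f-well-def}). The key observation is a symmetry: viewing $\psi^{-1}$ as a map on the underlying graded vector space of $B^\psi$, a short direct calculation shows that $\psi^{-1}$ is again a graded bialgebra automorphism of $B^\psi$, and moreover the Zhang twist $(B^\psi)^{\psi^{-1}}$ recovers the original bialgebra $B$, since the multiplication in $(B^\psi)^{\psi^{-1}}$ sends $x \otimes y$ to $x *_\psi \psi^{-|x|}(y) = x\psi^{|x|}(\psi^{-|x|}(y)) = xy$. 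Hence the entire construction leading up to $f$ and $\tilde f$ can be rerun with the pair $(B,\psi)$ replaced by $(B^\psi, \psi^{-1})$.

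Applying \Cref{i-map-both} to the pair $(B^\psi, \psi^{-1})$ yields a bialgebra map $B = (B^\psi)^{\psi^{-1}} \to \mathcal{H}(B^\psi)^{\hat\theta}$; the universal property of the Hopf envelope $\mathcal{H}(B)$ then produces a Hopf algebra map $g: \mathcal{H}(B) \to \mathcal{H}(B^\psi)^{\hat\theta}$, and the analogue of \Cref{f-well-def} promotes $g$ to a Hopf algebra map $\tilde g: \mathcal{H}(B)^{\mathcal{H}(\psi)} \to \mathcal{H}(B^\psi)$. The identifications that make this formally legal, namely that $\mathcal{H}(\psi^{-1})$ on $\mathcal{H}(B^\psi)$ equals $\hat\theta$ and that $\mathcal{H}((\psi^{-1})^{-1})$ on $\mathcal{H}(B)$ equals $\mathcal{H}(\psi) = \theta$, follow from the functoriality recorded in \eqref{eq:liftingbiauto}.

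It remains to verify that $\tilde g \circ f = \id_{\mathcal{H}(B^\psi)}$ and $f \circ \tilde g = \id_{\mathcal{H}(B)^{\mathcal{H}(\psi)}}$. For the first equality, both sides are Hopf algebra endomorphisms of $\mathcal{H}(B^\psi)$, so by the uniqueness clause of the universal property of $\mathcal{H}(B^\psi)$ it suffices to show they agree after precomposing with $i_{B^\psi}: B^\psi \to \mathcal{H}(B^\psi)$. By the diagram \eqref{eq:hopfenvelope}, $f \circ i_{B^\psi} = i_\psi$, which coincides with $i_B$ on underlying vector spaces; the symmetric diagram for $g$, together with the fact that $\tilde g$ agrees with $g$ on underlying vector spaces, then shows that $\tilde g \circ f \circ i_{B^\psi}$ coincides with $i_{B^\psi}$, as required. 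The second equality follows by the identical argument with the roles of the two Hopf envelopes swapped.

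The main obstacle is not any single computation but rather the bookkeeping needed to keep track of the four closely related Hopf algebras $\mathcal{H}(B)$, $\mathcal{H}(B)^{\mathcal{H}(\psi)}$, $\mathcal{H}(B^\psi)$, $\mathcal{H}(B^\psi)^{\hat\theta}$, along with the various graded automorphisms that live on each; once the symmetry $(B^\psi)^{\psi^{-1}} = B$ and the compatibility of $\mathcal{H}(-)$ with inverses are in hand, both inverse checks reduce transparently to the universal property.
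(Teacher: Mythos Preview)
Your proposal is correct and follows essentially the same approach as the paper: construct the candidate inverse $g$ by rerunning the construction of $f$ with $(B,\psi)$ replaced by $(B^\psi,\psi^{-1})$, invoke the analogue of \Cref{f-well-def} to promote $g$ to a Hopf algebra map $\mathcal{H}(B)^{\theta}\to\mathcal{H}(B^\psi)$, and then check both composites are the identity by precomposing with the canonical maps $i_B$, $i_{B^\psi}$ and appealing to the uniqueness in the universal property. The paper organizes the verification of $f\circ g=\id$ into a single commutative diagram, but the content is the same as your underlying-vector-space argument.
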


\begin{proof}
We show that the map $f$ constructed above is an isomorphism of Hopf algebras $\mc{H}(B^\psi) \xrightarrow{\cong} \mc{H}(B)^\theta$ by producing an inverse to $f$. Here again, by the universal property of $\mathcal H(B)$, there is a unique Hopf algebra map $g$ making the diagram below commute:
\[
\xymatrix{ 
B \ar[r]^-{i_B}\ar[dr]_-i & \mc{H} (B) \ar[d]^-g\\
& \mc{H} \left( B^\psi\right)^{\hat\theta}.
}
\]
By the fact that $(B^\psi)^{\psi^{-1}} \cong B$ and by \Cref{i-map-both}, $i$ is a map of bialgebras $B \cong (B^{\psi})^{\psi^{-1}} \to \mc{H}(B^\psi)^{\hat \theta}$ induced by $i_{B^\psi}: B^\psi \to \mc{H}(B^{\psi}).$ Now using \Cref{f-well-def}, we obtain the commutative diagram:
\[
\xymatrix{
B \ar[rr]^-{i_B} \ar[d]_-{i_B} \ar[rd]^-i && \mc{H}(B) \\
\mc{H}(B) \ar[r]_-g & \mc{H} \left( B^\psi\right)^{\hat\theta} \ar[ru]_-f. }
\]
The universal property of $\mathcal H(B)$ forces $f \circ g=\id$. The argument for $g \circ f = \id $ is symmetric. By the same argument as \Cref{f-well-def}, $g$ is a Hopf algebra map $\mc{H}(B)^\theta \to \mc{H}(B^\psi)$. 
\end{proof}

\subsection{Zhang twists vs.~2-cocycle twists of Hopf algebras}

For a Hopf algebra $H$, there is another way of deforming the multiplicative structure. This twist, called a 2-cocycle twist, is defined by a 2-cocycle $\sigma \in {\rm Hom}_\kk(H\otimes H,\kk)$, see \cite{Doi93, DT94, M2005}.  
We now relate the above discussion of Zhang twists to the 2-cocycle twists of Hopf algebras to provide a partial answer to \Cref{ques:twist}. 

We first recall some background on 2-cocycle twists. Let $H$ be a Hopf algebra. A \emph{left $H$-Galois object} is a left $H$-comodule algebra $A\neq 0$ such that if $\alpha: A\to H\otimes A$ denotes the coaction of $H$ on $A$, the linear map defined by the following composition 
\[
\xymatrix{ 
A\otimes A \ar[r]^-{\alpha\otimes \id} & H\otimes A\otimes A \ar[r]^-{\id \otimes m} & H\otimes A,}
\]
is an isomorphism of vector spaces, where $m : A \otimes A \rightarrow A$ is the multiplication map of $A$. A \emph{right $H$-Galois object} can be defined in an analogous way. If $K$ is another Hopf algebra, then an \emph{$(H,K)$-biGalois object} is an $H$-$K$-bicomodule algebra which is both a left $H$-Galois object and a right $K$-Galois object. A classical result of Schauenburg \cite{Sch1996} states that two Hopf algebras are Morita-Takeuchi equivalent (or their comodule categories are monoidally equivalent) if and only if there exists a biGalois object between them.

There is an important class of Hopf-Galois objects, called \emph{cleft} \cite{Bichon2014}. A \emph{right $H$-cleft object} is a right $H$-comodule algebra $A$ which admits an $H$-comodule isomorphism $\phi: H \xrightarrow{\sim} A$ that is also invertible with respect to the convolution product. Such a map $\phi$ can be chosen so that it preserves the unit; in this case, $\phi$ is called a \emph{section}. A right $H$-cleft object is a right $H$-Galois object that admits an $H$-comodule isomorphism $\phi: H \xrightarrow{\sim} A$. Given another Hopf algebra $K$, a \emph{left $K$-cleft object} and an \emph{$(K,H)$-bicleft object} are defined analogously. 

Cleft objects can be constructed from 2-cocycles as follows: A \emph{2-cocycle} on a Hopf algebra $H$ is a convolution invertible linear map $\sigma: H\otimes H\to \kk$ satisfying 
\begin{equation}\label{eq:2cocycle}
\begin{aligned}
   \sum \sigma(x_1, y_1)\sigma (x_2y_2, z)&=\sum \sigma(y_1, z_1)\sigma(x, y_2z_2)\quad \text{and}\\
   \sigma(x,1)&=\sigma(1,x)=\varepsilon(x),
\end{aligned}
\end{equation} 
for all $x,y,z\in H$. The set of all $2$-cocycles on $H$ is denoted by $Z^2(H)$. The convolution inverse of $\sigma$, denoted by $\sigma^{-1}$, satisfies
\begin{equation}\label{eq:2cocycleinverse}
\begin{aligned}
    \sum \sigma^{-1}(x_1y_1, z)\sigma^{-1}(x_2, y_2)&=\sum \sigma^{-1}(x, y_1z_1)\sigma^{-1}(y_2, z_2)\quad \text{and}\\
    \sigma^{-1}(x,1)&=\sigma^{-1}(1,x)=\varepsilon(x),
\end{aligned}
\end{equation}
for all $x,y,z\in H$. Given a pair $(A,\phi)$, consisting of a right $H$-cleft object $A$ together with a section $\phi: H \overset{\sim}{\longrightarrow} A$, consider the linear map $\sigma$ defined on $H\otimes H$ by 
\begin{equation}\label{eq:section2cocycle}
\sigma(x,y):=\sum \phi(x_1)\phi(y_1)\overline{\phi}(x_2y_2), 
\end{equation}
for any $x,y\in H$, where $\overline{\phi}$ is the convolution inverse of $\phi$. Then $\sigma$ takes values in $\kk=A^{{\rm co}H}$, and is a 2-cocycle on $H$ \cite[Theorem 11]{DT86}. Moreover, let $\!_\sigma H$ denote the right $H$-comodule algebra $H$ endowed with the original unit and deformed product
\[
x\cdot_\sigma y=\sum \sigma(x_1,y_1)x_2y_2,
\]
for any $x,y\in \!_\sigma H$. Then we have an isomorphism $\!_\sigma H\overset{\sim}{\longrightarrow} A$ via $y\mapsto \phi(y)$ as $H$-comodule algebras. Indeed every right $H$-cleft object arises in this way \cite[Proposition 1.4]{Mas94}. Similarly, we can define the left $H$-comodule algebra $H_{\sigma^{-1}}$ by using the convolution inverse of any 2-cocycle $\sigma$ on $H$. Again, every left $H$-cleft object  arises in this way.

Given a 2-cocycle $\sigma: H\otimes H\to\kk$, let $H^\sigma$ denote the coalgebra $H$ endowed with the original unit and deformed product
\[
x*_\sigma y:=\sum \sigma(x_1,y_1)\,x_2y_2\,\sigma^{-1}(x_3,y_3).
\]
By \cite{Doi93}, $H^\sigma$ is a bialgebra and is indeed a Hopf algebra with the deformed antipode $S^\sigma$ given in \cite[Theorem 1.6]{Doi93}. We call $H^\sigma$ the \emph{2-cocycle twist} of $H$ by $\sigma$. It is well-known that two Hopf algebras are 2-cocycle twists of each other if and only if there exists a bicleft object between them (e.g., see \cite{Sch1996}). 

\begin{proposition}
\label{twist-and-cleft}
Let $H$ be a Hopf algebra satisfying the twisting conditions (\textbf{T1})-(\textbf{T2}). Let $H^\phi$ be a right Zhang twist  of $H$ by a graded algebra automorphism $\phi$. 
\begin{enumerate}
  \item Suppose $\Delta\circ \phi=(\phi\otimes \id)\circ \Delta$. Then $H^\phi\cong \!_\sigma H$ is right $H$-cleft with a 2-cocycle $\sigma: H\otimes H\to \kk$ given by $\sigma(x,y)=\varepsilon(x)\varepsilon(\phi^{|x|}(y))$, for homogeneous elements $x,y\in H$.
    \item Suppose $\Delta\circ \phi=(\id\otimes \phi)\circ \Delta$. Then $H^\phi\cong H_{\sigma^{-1}}$ is left $H$-cleft with a 2-cocycle convolution inverse $\sigma^{-1}: H\otimes H\to \kk$ given by $\sigma^{-1}(x,y)=\varepsilon(x)\varepsilon(\phi^{|x|}(y))$, for homogeneous elements $x,y\in H$. 
\end{enumerate}
\end{proposition}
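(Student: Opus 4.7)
My plan is to produce the right $H$-cleft structure on $H^\phi$ directly and then read off the associated 2-cocycle from the canonical section; case (2) will follow by the symmetric argument with left and right swapped. For case (1), I would first verify that $\rho := \Delta$ makes $H^\phi$ into a right $H$-comodule algebra, where the target $H^\phi \otimes H$ carries its tensor-product algebra structure with $H^\phi$-multiplication in the first factor and $H$-multiplication in the second. Coassociativity and counit are inherited from $H$, and multiplicativity is the only non-trivial check; it follows from \textbf{(T2)} (giving $|x_1|=|x|$) together with hypothesis (1):
\[
\Delta(x *_\phi y) = \Delta(x)\,\Delta(\phi^{|x|}(y)) = \sum x_1\phi^{|x|}(y_1)\otimes x_2 y_2 = \sum (x_1 *_\phi y_1)\otimes x_2 y_2.
\]

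The next step is to exhibit a section. I would take the identity map $\id\colon H \to H^\phi$, which is unit-preserving and trivially a right $H$-comodule morphism. For a convolution inverse in $\operatorname{Hom}_{\kk}(H, H^\phi)$, I would define $\overline{\id}(x) := \phi^{-|x|}(S(x))$ on homogeneous $x$ and verify both convolution identities; the forward identity uses $|x_1|=|x_2|=|x|$ to reduce to the ordinary antipode relation, while the reverse identity additionally uses $S(H_n)\subseteq H_{-n}$ from \Cref{cor:T3}. With this section in hand, plugging into formula \eqref{eq:section2cocycle} and collapsing the $\phi$-powers via the grading gives
\[
\sigma(x,y) = \sum x_1\phi^{|x|}(y_1)\,S(y_2)\,S(x_2).
\]
Now hypothesis (1) yields $\Delta(\phi^{|x|}(y)) = \sum \phi^{|x|}(y_1)\otimes y_2$, so the antipode axiom applied to $\phi^{|x|}(y)$ gives $\sum \phi^{|x|}(y_1)S(y_2) = \varepsilon(\phi^{|x|}(y))\cdot 1$. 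Substituting this in and applying the antipode axiom once more to $x$ leaves $\sigma(x,y) = \varepsilon(x)\varepsilon(\phi^{|x|}(y))$, matching the claim.

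Finally, to identify $H^\phi$ with ${}_\sigma H$ as right $H$-comodule algebras, I would use the identity map on underlying vector spaces; it respects the coactions automatically, and matching the multiplications follows by substituting the explicit $\sigma$ into the deformed product of ${}_\sigma H$ and applying the counit axiom twice:
\[
x\cdot_\sigma y = \sum \varepsilon(x_1)\varepsilon(\phi^{|x|}(y_1))\,x_2 y_2 = x\cdot\phi^{|x|}(y) = x *_\phi y.
\]
Case (2) proceeds in complete parallel, with $H^\phi$ given the \emph{left} $H$-comodule algebra structure via $\Delta$, the same section $\id$ with the same convolution inverse, and the analogous computation producing $\sigma^{-1}(x,y) = \varepsilon(x)\varepsilon(\phi^{|x|}(y))$. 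The main technical obstacle I foresee is the bookkeeping of Sweedler indices when unwinding $\phi^{-|x|}\circ S$ against the Zhang-twisted product; once the grading conditions \textbf{(T2)}-\textbf{(T3)} and the one-sided intertwining hypothesis on $\phi$ are kept explicit, every remaining identity reduces to the antipode and counit axioms of $H$.
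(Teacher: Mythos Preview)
Your proposal is correct and follows essentially the same route as the paper: establish that $\Delta$ makes $H^\phi$ a right $H$-comodule algebra, exhibit a convolution-invertible comodule isomorphism $H\to H^\phi$, and read off $\sigma$ from formula \eqref{eq:section2cocycle}. The only cosmetic differences are that the paper first verifies the Galois condition (writing down an explicit inverse to the Galois map) before invoking the cleft criterion, and then takes the automorphism $\phi$ itself as the section with convolution inverse $\overline{\phi}(x)=\phi^{1-|x|}(S(x))$, whereas you take the identity with inverse $\phi^{-|x|}(S(x))$; both choices are legitimate sections under hypothesis (1) and yield the same $\sigma$, and your direct verification of convolution invertibility makes the Galois step unnecessary.
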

\begin{proof}
We prove part (1), and part (2) follows in a similar way. 
We first show that $H^\phi$ is a right Galois-object. Note that $H^\phi \cong H$, as graded vector spaces. As a result, the comultiplication $\Delta: H\to H\otimes H$ gives $H^\phi$ a right $H$-comodule structure via $\Delta^\phi: H^\phi\to H^\phi\otimes H$. If $\Delta\circ \phi=(\phi\otimes \id)\circ \Delta$, then
\[\small
    \Delta(x*y)=\Delta(x\phi^{|x|}(y))=\Delta(x)\Delta(\phi^{|x|}(y))=\Delta(x)( \phi^{|x|}\otimes \id)\Delta(y)=\sum x_1\phi^{|x_1|}(y_1)\otimes x_2y_2=\Delta^\phi(x)\Delta^\phi(y),
\]
for any homogeneous elements $x,y\in H^{\phi}$, so $H^\phi$ is a right $H$-comodule algebra. Now, it can be checked that the map used in the definition of a right $H$-Galois object  
\[
H^\phi\otimes H^\phi\xrightarrow{\id\otimes \Delta^\phi} H^\phi\otimes H^\phi\otimes H\xrightarrow{m\otimes \id} H^\phi\otimes H,
\]
given by $x\otimes y\mapsto \sum x\phi^{|x|}(y_1)\otimes y_2$, is bijective with inverse $x\otimes y\mapsto \sum x\phi^{|x|-|y|}(S(y_1))\otimes y_2$, where $S$ is the antipode of $H$. Thus $H^\phi$ is a right $H$-Galois object. Moreover, the identity map $H=H^\phi$ is an isomorphism of right $H$-modules. Therefore, we conclude that $H^\phi$ is cleft \cite[Theorem 9]{DT86}.

In fact, the $H$-comodule isomorphism $\phi: H\to H^\phi$ is indeed invertible with respect to the convolution product in ${\rm Hom}_\kk(H,H^\phi)$ with inverse $\overline{\phi}$ given by
\[
\overline{\phi}(x)=\phi^{1-|x|}(S(x)),
\]
for any homogeneous element $x\in H$. Then by \eqref{eq:section2cocycle}, the 2-cocycle $\sigma: H\otimes H\to \kk$ associated with the $H$-cleft object $H^\phi$ is given by
\begin{align*}
\sigma(x,y)&= \sum \phi(x_1)*\phi(y_1)*\overline{\phi}(x_2y_2)\\
&= \sum (\phi(x_1)\phi^{1+|x|}(y_1))* \phi^{1-|x_2y_2|}(S(x_2y_2))\\
&= \sum \phi(x_1)\phi^{1+|x|}(y_1)\phi(S(x_2y_2))\\
&= \sum \phi(x_1\phi^{|x|}(y_1)S(y_2)S(x_2))\\
\overset{{(\bf{P1})}}&{=} \sum \phi(x_1\sum(\phi^{|x|}(y)_1S(\phi^{|x|}(y)_2))S(x_2))\\
&=\phi\left(\sum x_1S(x_2)\right)\varepsilon(\phi^{|x|}(y))\\
&= \varepsilon(x)\varepsilon(\phi^{|x|}(y)). \qedhere
\end{align*}
\end{proof}

We now relate the Zhang twist of a graded Hopf algebra $H$ by a twisting pair to a 2-cocycle twist of $H$.

\begin{proposition}
\label{twist-and-2-cocycle}
Let $H$ be a Hopf algebra satisfying the twisting conditions (\textbf{T1})-(\textbf{T2}). For any twisting pair $(\phi_1,\phi_2)$ of $H$, we have the following.
\begin{enumerate}
    \item The map $\phi_1 \circ \phi_2$ is a graded Hopf automorphism of $H$.
    \item The linear map $\sigma: H \otimes H \rightarrow \kk$ defined by 
    \[\sigma(x,y)=\varepsilon(x)\varepsilon(\phi_2^{|x|}(y)), \text{ for any homogeneous elements }x,y \in H,\] 
    is a $2$-cocycle, whose convolution inverse $\sigma^{-1}$ is given by 
    \[\sigma^{-1}(x,y)=\varepsilon(x)\varepsilon(\phi_1^{|x|}(y)), \text{ for any homogeneous elements }x,y \in H. \]
    \item The 2-cocycle twist $H^\sigma \cong H^{\phi_1 \circ \phi_2}$ is a right Zhang twist, where the $2$-cocyle $\sigma$ is described in (2).
\end{enumerate} 
As a consequence, $H$ and $H^{\phi_1 \circ \phi_2}$ are  Morita-Takeuchi equivalent with bi-cleft object given by $H^{\phi_1}$. 
\end{proposition}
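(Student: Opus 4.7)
The plan is to verify (1)-(3) in turn and deduce the Morita-Takeuchi consequence from Schauenburg's biGalois characterization. For (1), the map $\phi_1 \circ \phi_2$ is a composition of graded algebra automorphisms, hence a graded algebra automorphism itself, and $\varepsilon \circ \phi_1 \circ \phi_2 = \varepsilon$ is precisely (\textbf{P2}). The key point is verifying the coalgebra-map property. Applying (\textbf{P1}) twice yields $\Delta \circ \phi_1 \phi_2 = (\id \otimes \phi_1)\circ \Delta \circ \phi_2 = (\id\otimes \phi_1)\circ(\phi_2\otimes \id)\circ\Delta = (\phi_2 \otimes \phi_1) \circ \Delta$, so it suffices to show this equals $(\phi_1 \phi_2 \otimes \phi_1 \phi_2) \circ \Delta$. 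Writing $\phi_1 = \Xi^r[\pi_1]$ and $\phi_2 = \Xi^l[\pi_2]$ per \Cref{thm:twistingpair}, one has $\phi_1 \phi_2(h) = \sum \pi_2(h_1)\, h_2\, \pi_1(h_3)$. Expanding $(\phi_1\phi_2 \otimes \phi_1\phi_2) \circ \Delta$ as an evaluation on the six-fold iterated coproduct of $h$ and applying $(\pi_1 \otimes \pi_2) \circ \Delta = \varepsilon$ to the two adjacent middle Sweedler components collapses the expression back to $(\phi_2 \otimes \phi_1) \circ \Delta$. Antipode compatibility is then automatic since $\phi_1 \phi_2$ is a bialgebra automorphism between Hopf algebras.

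For (2), I would invoke \Cref{twist-and-cleft}: its part (1) applied to $\phi_2$ (which satisfies $\Delta \circ \phi_2 = (\phi_2 \otimes \id) \circ \Delta$ by (\textbf{P1})) produces $\sigma(x,y) = \varepsilon(x)\varepsilon(\phi_2^{|x|}(y))$ as a genuine 2-cocycle, while its part (2) applied to $\phi_1$ produces $\tau(x,y) := \varepsilon(x)\varepsilon(\phi_1^{|x|}(y))$ as the convolution inverse of some 2-cocycle on $H$. To identify $\tau = \sigma^{-1}$, set $\pi_i := \varepsilon \circ \phi_i$, so that $\varepsilon \circ \phi_i^n = \pi_i^{*n}$ (the $n$-th convolution power). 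Computing $(\sigma * \tau)(x \otimes y)$ for homogeneous $x \in H_n$, the twisting condition (\textbf{T2}) forces $|x_i| = n$ for all Sweedler factors of $x$, so the convolution decouples as $\varepsilon(x) \cdot (\pi_2^{*n} * \pi_1^{*n})(y)$. \Cref{thm:twistingpair} gives $\pi_2 * \pi_1 = \varepsilon = \pi_1 * \pi_2$, so $\pi_1$ and $\pi_2$ are mutual convolution inverses, and hence so are their $n$-th convolution powers. Therefore $\sigma * \tau = \varepsilon_{H \otimes H}$ and $\tau = \sigma^{-1}$.

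For (3), I would unfold the 2-cocycle twisted product on a homogeneous $x \in H_n$ directly:
\[
x *_\sigma y = \sum \sigma(x_1, y_1)\, x_2 y_2\, \sigma^{-1}(x_3, y_3) = \sum \varepsilon(x_1) \varepsilon(x_3)\, x_2 \cdot \pi_2^{*n}(y_1)\, y_2\, \pi_1^{*n}(y_3),
\]
using (\textbf{T2}) to equate $|x_i| = n$. The $x$-factor collapses to $x$ via the counit axiom $\sum \varepsilon(x_1) x_2 \varepsilon(x_3) = x$, while the $y$-factor equals $\Xi^l[\pi_2^{*n}] \circ \Xi^r[\pi_1^{*n}](y) = \phi_2^n \circ \phi_1^n(y) = (\phi_1 \circ \phi_2)^n(y)$ by (\textbf{P3}). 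Hence $x *_\sigma y = x \cdot (\phi_1 \phi_2)^{|x|}(y) = x *_{\phi_1 \phi_2} y$, exhibiting $H^\sigma \cong H^{\phi_1 \phi_2}$ as Hopf algebras. The Morita-Takeuchi equivalence of $H$ and $H^{\phi_1 \phi_2}$ then follows from the general biGalois theory applied to the 2-cocycle $\sigma$, with the bi-cleft object being $H^{\phi_1} \cong H_{\sigma^{-1}}$ (from \Cref{twist-and-cleft}(2)), equipped with the canonical right $H^\sigma$-coaction inherited from the 2-cocycle twist. The most delicate step is the coalgebra-map verification in (1), pivoting on the collapse of an iterated coproduct via $(\pi_1 \otimes \pi_2)\Delta = \varepsilon$; the rest is a chain of Sweedler-notation manipulations organized around the winding-automorphism description of twisting pairs from \Cref{thm:twistingpair}.
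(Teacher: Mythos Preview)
Your proof is correct and follows the same overall plan as the paper (verify (1), (2), (3) in turn, then invoke biGalois theory), but the execution differs at each step. The paper stays with the abstract properties (\textbf{P1})--(\textbf{P4}) and does Sweedler manipulations directly, whereas you systematically translate into the winding-automorphism/convolution formalism of \Cref{thm:twistingpair}.

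For (1), the paper's argument is one line shorter: from $(\phi_2\otimes\phi_1)\circ\Delta$ it inserts the identity $(\phi_1\otimes\phi_2)\circ\Delta=\Delta$ (\textbf{P4}) and applies (\textbf{P3}) to obtain $((\phi_1\phi_2)\otimes(\phi_1\phi_2))\circ\Delta$, avoiding your six-fold coproduct expansion. Your route works but is essentially re-deriving (\textbf{P4}). For (2), the paper verifies the 2-cocycle identity and the convolution-inverse relation by hand; you instead import the 2-cocycle property of $\sigma$ from \Cref{twist-and-cleft}(1) and then identify $\tau=\sigma^{-1}$ via $\pi_2^{*n}*\pi_1^{*n}=\varepsilon$. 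Your approach is slicker here and makes the convolution-algebra structure explicit, though it buys less than one might hope since the 2-cocycle verification in the paper is short anyway. For (3), the paper unwinds $x*_\sigma y$ using (\textbf{P1}) twice; your identification $\sum\pi_2^{*n}(y_1)y_2\pi_1^{*n}(y_3)=\phi_2^n\phi_1^n(y)$ is equivalent but packages the computation more conceptually. For the bicleft claim, the paper spells out that $H^{\phi_1}$ is right $H^{\phi_1\phi_2}$-cleft by noting $(H^{\phi_1})^{\phi_2}=H^{\phi_1\phi_2}$ and applying a left-twist version of \Cref{twist-and-cleft}; your appeal to ``general biGalois theory'' is a touch vague but the content is the same.
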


\begin{proof}
(1): By our assumptions, we know that $\phi_1 \circ \phi_2$ is a graded algebra automorphism of $H$ and commutes with the counit $\varepsilon$. So, it suffices to show that $\phi_1 \circ \phi_2$ is compatible with the coassociativity axiom. This follows from our twisting pair conditions:
\begin{align*}
    \Delta \circ \phi_1 \circ \phi_2 &\overset{(\textbf{P1})}{=}(\id \otimes \phi_1) \circ \Delta \circ \phi_2 \overset{(\textbf{P1})}{=} (\id \otimes \phi_1) \circ (\phi_2\otimes \id) \circ \Delta \\ 
    & \overset{(\textbf{P4})}{=} (\phi_2\otimes \phi_1) \circ (\phi_1\otimes \phi_2) \circ \Delta \overset{(\textbf{P3})}{=} ((\phi_1 \circ \phi_2)\otimes (\phi_1 \circ \phi_2)) \circ \Delta.
\end{align*}
It follows that $\phi_1 \circ \phi_2$ is a Hopf automorphism. 

(2): We show that the maps $\sigma$ and $\sigma^{-1}$ defined in the statement are inverses to each other with respect to the convolution product $*$ in ${\rm Hom}_\kk(H\otimes H,\kk)$. Indeed, for any homogeneous elements $x,y\in H$, we have 
\begin{align*}
    (\sigma*\sigma^{-1})(x,y)&=\sum \sigma(x_1,y_1)\sigma^{-1}(x_2,y_2)\\
    &=\sum \varepsilon(x_1)\varepsilon(\phi_2^{|x|}(y_1))\varepsilon(x_2)\varepsilon(\phi_1^{|x|}(y_2))\\
    &=\sum \varepsilon(x_1\varepsilon(x_2)) \varepsilon(\phi_2^{|x|}(y_1)\phi_1^{|x|}(y_2))\\
    \overset{(\textbf{P1})}&= \sum \varepsilon(x) \varepsilon((\phi_1\phi_2)^{|x|}(y)_1(\phi_1\phi_2)^{|x|}(y)_2)\\
    &= \sum \varepsilon(x) \varepsilon((\phi_1\phi_2)^{|x|}(y))\\
    \overset{(\textbf{P2})}&{=}\varepsilon(x)\varepsilon(y).
\end{align*}
Similarly, we have $(\sigma^{-1}*\sigma)(x,y)=\varepsilon(x)\varepsilon(y)$.
Secondly, it is clear that $\sigma(1,x)=\sigma(x,1)=\varepsilon(x)$. For any homogeneous elements $x,y,z \in H$, we have 
\begin{align*}
\sum \sigma(x_1,y_1)\sigma(x_2y_2,z)&=\sum \varepsilon(x_1)\varepsilon(\phi_2^{|x|}(y_1))\varepsilon(x_2y_2)\varepsilon(\phi_2^{|xy|}(z))\\
&=\sum \varepsilon(x_1\varepsilon(x_2))\varepsilon(\phi_2^{|x|}(y_1\varepsilon(y_2)))\varepsilon(\phi_2^{|xy|}(z))\\
&= \sum \varepsilon(x)\varepsilon(\phi_2^{|x|}(y))\varepsilon(\phi_2^{|xy|}(z)),
\end{align*}
and 
\begin{align*}
    \sum \sigma(y_1,z_1)\sigma(x,y_2z_2)&=\sum \varepsilon(y_1)\varepsilon(\phi_2^{|y|}(z_1))\varepsilon(x)\varepsilon(\phi_2^{|x|}(y_2z_2))\\
    &=\sum \varepsilon(x)\varepsilon(\phi_2^{|x|}(\varepsilon(y_1)y_2))\varepsilon(\phi_2^{|y|}(z_1)) \varepsilon(\phi_2^{|x|}(z_2))\\
    &=\sum \varepsilon(x)\varepsilon(\phi_2^{|x|}(y))\varepsilon(\phi_2^{|y|}(z_1)\phi_2^{|x|}(z_2))\\
\overset{(\textbf{P1})}&=\sum \varepsilon(x)\varepsilon(\phi_2^{|x|}(y))\varepsilon(\phi_2^{|y|}(z)_1\phi_2^{|x|}(\phi_2^{|y|}(z)_2))\\
&=\sum \varepsilon(x)\varepsilon(\phi_2^{|x|}(y))\varepsilon(\phi_2^{|x|}(\varepsilon(\phi_2^{|y|}(z)_1)\phi_2^{|y|}(z)_2)) \\
&=\sum \varepsilon(x)\varepsilon(\phi_2^{|x|}(y))\varepsilon(\phi_2^{|xy|}(z)).
\end{align*}
Hence, $\sum \sigma(x_1,y_1)\sigma(x_2y_2,z)=\sum \sigma(y_1,z_1)\sigma(x,y_2z_2)$ for all homogeneous elements $x,y,z\in H$. We conclude that $\sigma$ is a 2-cocycle on $H$ with the given convolution inverse $\sigma^{-1}$. 

(3): Now it is straightforward to check that the identity map on $H$ induces an isomorphism of Hopf algebras between the Zhang twist $H^{\phi_1\circ \phi_2}$ and the 2-cocycle twist $H^\sigma$. Indeed by definition $\id: H^{\phi_1\circ \phi_2}\xrightarrow{\sim} H^\sigma$ is a coalgebra map. So it remains to show it is also an algebra map, which follows from the fact that
\begin{align*}
 x*_\sigma y&= \sum \sigma(x_1,y_1)x_2y_2\sigma^{-1}(x_3,y_3)\\
 &=\sum \varepsilon(x_1)\varepsilon(\phi_2^{|x|}(y_1))x_2y_2\varepsilon(x_3)\varepsilon(\phi_1^{|x|}(y_3))\\
 &= \sum x\varepsilon(\phi_2^{|x|}(y_1))y_2\varepsilon(\phi_1^{|x|}(y_3))\\
\overset{(\textbf{P1})}&= \sum x\varepsilon(\phi_2^{|x|}(y_1)_1)\phi_2^{|x|}(y_1)_2\varepsilon(\phi_1^{|x|}(y_2))\\ 
&= \sum x\phi_2^{|x|}(y_1)\varepsilon(\phi_1^{|x|}(y_2))\\
\overset{(\textbf{P1})}&= \sum x\phi_1^{|x|}\phi_2^{|x|}(y)_1\varepsilon(\phi_1^{|x|}\phi_2^{|x|}(y)_2)\\
&= x\phi_1^{|x|}\phi_2^{|x|}(y)\\
&= x*_{(\phi_1\circ \phi_2)} y,
\end{align*}
for all homogeneous elements $x,y\in H$. Hence, $H$ and $H^{\phi_1 \circ \phi_2}$ are Morita-Takeuchi equivalent. Finally, by \Cref{twist-and-cleft}, $H^{\phi_1}$ is a left $H$-cleft object. By a left Zhang twist version of \Cref{twist-and-cleft}, we can argue similarly that $H^{\phi_1}$ is a right $\!^{\phi_2^{-1}}(H^{\phi_1})=H^{\phi_1 \circ \phi_2}$-cleft object. So $H^{\phi_1}$ is a $(H,H^{\phi_1 \circ \phi_2})$-bicleft object.
\end{proof}

\begin{thm}
\label{thm:Zhang and 2-cocycle twists}
Let $B$ be a bialgebra satisfying the twisting conditions (\textbf{T1})-(\textbf{T2}). For any twisting pair $(\phi_1,\phi_2)$ of $B$, there is a unique twisting pair $(\mathcal H(\phi_1),\mathcal H(\phi_2))$ of the Hopf envelope $\mathcal H(B)$ extending $(\phi_1,\phi_2)$. Moreover, the 2-cocycle twist $\mathcal H(B)^\sigma$, with the 2-cocycle $\sigma: \mathcal H(B)\otimes \mathcal H(B)\to \kk$ given by \[\sigma(x,y)=\varepsilon(x)\varepsilon(\mathcal H(\phi_2)^{|x|}(y)), \quad 
\text{for any homogeneous elements } x,y\in \mathcal H(B),\]
is the right Zhang twist $\mathcal H(B)^{\mathcal H(\phi_1\circ \phi_2)}$.
\end{thm}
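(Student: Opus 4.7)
The plan is to assemble this result essentially by chaining together three earlier results, so most of the work has already been done in the preceding subsections. There is no serious new obstacle; the main task is to verify that the hypotheses of the earlier propositions line up correctly.

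First, I would invoke \Cref{cor:lifting} to produce the unique extension of the twisting pair $(\phi_1,\phi_2)$ of $B$ to a twisting pair $(\mathcal H(\phi_1),\mathcal H(\phi_2))$ of $\mathcal H(B)$. Uniqueness is automatic from that proposition, since the extension is forced by the requirement that it remain a twisting pair and by the defining identities $\widetilde{\pi_j}=\widetilde{\pi_{j+1}}\circ S$ in the explicit construction of the Hopf envelope. I would then record, via \Cref{t1-t3-hopf-env}, that $\mathcal H(B)$ satisfies the twisting conditions (\textbf{T1})-(\textbf{T2}) (in fact (\textbf{T3}) as well, by \Cref{cor:T3}), so that it is a legitimate input for the Zhang/2-cocycle comparison machinery of the previous subsection.

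With the twisting pair $(\mathcal H(\phi_1),\mathcal H(\phi_2))$ of the graded Hopf algebra $\mathcal H(B)$ in hand, I would then apply \Cref{twist-and-2-cocycle} directly. That proposition asserts that $\mathcal H(\phi_1)\circ \mathcal H(\phi_2)$ is a graded Hopf automorphism of $\mathcal H(B)$, that the formula
\[
\sigma(x,y)=\varepsilon(x)\,\varepsilon(\mathcal H(\phi_2)^{|x|}(y))
\]
defines a $2$-cocycle on $\mathcal H(B)$, and that the corresponding $2$-cocycle twist coincides with the right Zhang twist by $\mathcal H(\phi_1)\circ \mathcal H(\phi_2)$. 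Since $\mathcal H(-)$ is multiplicative (see \eqref{eq:liftingbiauto}), we have $\mathcal H(\phi_1)\circ \mathcal H(\phi_2) = \mathcal H(\phi_1\circ \phi_2)$, which gives the statement exactly as written.

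The only point that requires a moment's care is confirming that $\mathcal H(\phi_1)\circ \mathcal H(\phi_2) = \mathcal H(\phi_1\circ \phi_2)$ and that this composite is a \emph{graded} Hopf automorphism of $\mathcal H(B)$; both facts follow from \Cref{remark:twisting}, which notes that a lifted twisting pair automatically preserves the algebra grading of $\mathcal H(B)$. Alternatively, one could replace the last step by appealing to \Cref{f-iso}, which gives $\mathcal H(B)^{\mathcal H(\phi_1\circ \phi_2)}\cong \mathcal H\bigl(B^{\phi_1\circ \phi_2}\bigr)$, but the direct route via \Cref{twist-and-2-cocycle} is more economical. In particular, no new computations are needed beyond those already carried out in the proofs of \Cref{cor:lifting}, \Cref{t1-t3-hopf-env}, and \Cref{twist-and-2-cocycle}, so the proof amounts to little more than citing these three results in sequence.
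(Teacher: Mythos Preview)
Your proposal is correct and follows essentially the same approach as the paper, which simply cites \Cref{cor:lifting} and \Cref{twist-and-2-cocycle}. You supply a bit more connective tissue (invoking \Cref{t1-t3-hopf-env} and \Cref{remark:twisting} to justify that $\mathcal H(B)$ satisfies the twisting conditions and that the lifted pair is graded), but the logical skeleton is identical.
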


\begin{proof}
This follows from \Cref{cor:lifting} and \Cref{twist-and-2-cocycle}. 
\end{proof}

\begin{remark}
It is clear that not every 2-cocycle twist of a Hopf algebra can be realized as a Zhang twist. For instance, any Hopf algebra, when viewed as a $\mathbb Z$-graded algebra concentrated in degree zero, satisfies the twisting conditions (\textbf{T1})-(\textbf{T2}). But in this case, all Zhang twists are trivial. 
\end{remark}

\section{Manin's universal quantum groups}
\label{sec:Manin}

In this section, we examine Zhang twists of Manin's universal quantum groups of quadratic algebras, where automorphisms of such universal quantum groups come from those of the underlying quadratic algebras, and connect them to the 2-cocycle twists of these universal quantum groups.

\subsection{Preliminaries on Manin's universal bialgebras and universal quantum groups}

We first recall Manin's construction of the universal bialgebra and the universal quantum group as described in \cite{Manin2018}. Suppose $A$ is a quadratic algebra. We use the presentation $A=\kk\langle A_1\rangle /(R(A))$, where $R(A)\subseteq A_1\otimes A_1$ with $\dim_\kk A_1<\infty$. We denote by $\underline{\rm end}^l(A)$ the \emph{universal bialgebra} that left coacts on $A$ via $\rho_A: A\to  \underline{\rm end}^l(A)\otimes A$ preserving the $\mathbb Z$-grading of $A$, i.e., $\rho_A: A_n\to \underline{\rm end}^l(A)\otimes A_n$, and satisfies following
universal property: If $B$ is any bialgebra that left
coacts on $A$ via $\tau: A\to B\otimes A$ such that the coaction of $B$ preserves the grading of $A$, then there is a unique bialgebra map $f: \underline{\rm end}^l(A)\to B$ such that the following diagram commutes:
\[
\xymatrix{
A\ar[r]^-{\rho}\ar[dr]_-{\tau} &  \underline{\rm end}^l(A)\otimes A\ar[d]^-{f\otimes \id} \\
& B\otimes A.
}
\]
Similarly, we can define $\underline{\rm end}^r(A)$ as the universal bialgebra that right coacts on $A$ universally preserving the grading of $A$. For Hopf algebra coactions, we define $\underline{\rm aut}^l(A)$ 
(resp.~$\underline{\rm aut}^r(A)$) to be the \emph{universal quantum group} that coacts on $A$ universally on the left (resp.~on the right) preserving the grading of $A$. In what follows, whenever we write $\underline{\rm end}$ or $\underline{\rm aut}$ without the superscript, the statements hold for both left and right versions.

For two quadratic algebras $A=\kk\langle A_1\rangle/(R(A))$ and $B=\kk\langle B_1\rangle/(R(B))$, the \emph{bullet product} of $A$ and $B$ is defined as 
\begin{equation}\label{eqn:bullet} 
A\bullet B:=\frac{\kk\langle A_1\otimes B_1\rangle}{\left(S_{(23)}\left(R(A)\otimes R(B)\right)\right)},\end{equation} 
where $S_{(23)}: A_1\otimes A_1\otimes B_1\otimes B_1\to A_1\otimes B_1\otimes A_1\otimes B_1$ is the flip of the middle two tensor factors in the 4-fold tensor product \cite[Section 4.2]{Manin2018}. 

\begin{lemma}
\label{bullet-twist} 
Let $A=\kk\langle A_1\rangle/(R(A))$, $B=\kk\langle B_1\rangle/(R(B))$ be quadratic algebras, and $\phi: A \to A$, $\psi: B \to B$ be graded automorphisms. Then, we have an algebra isomorphism  
\[A^\phi \bullet B^\psi \cong (A \bullet B)^{\phi \bullet \psi},\]
where $\phi \bullet \psi$ is the graded automorphism of $A \bullet B$ generated by the degree-1 component $\phi_1 \otimes \psi_1: A_1 \otimes B_1 \to A_1 \otimes B_1$. 
\end{lemma}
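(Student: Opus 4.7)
The plan is to match the two sides as quadratic algebras on the same generating space $A_1\otimes B_1$, so that the identity on degree $1$ extends to the desired isomorphism. The key input is a standard computation for Zhang twists of quadratic algebras: if $C=\kk\langle C_1\rangle/(R(C))$ is quadratic and $\phi$ is a graded algebra automorphism of $C$, then
\[
C^\phi \;\cong\; \kk\langle C_1\rangle \big/ \bigl((\id\otimes \phi|_{C_1}^{-1})(R(C))\bigr).
\]
This is immediate from the formula $c *_\phi c' = c\,\phi(c')$ for $c,c'\in C_1$, together with the preservation of Hilbert series under Zhang twists, which forces the candidate quadratic presentation to be exact (see \cite{Zhang1996}).

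Applying this observation three times, together with the fact that $(\phi\bullet\psi)|_{(A\bullet B)_1} = \phi|_{A_1}\otimes \psi|_{B_1}$, I record the presentations
\begin{align*}
A^\phi &\cong \kk\langle A_1\rangle\big/\bigl((\id\otimes\phi^{-1})R(A)\bigr),\\
B^\psi &\cong \kk\langle B_1\rangle\big/\bigl((\id\otimes\psi^{-1})R(B)\bigr),\\
(A\bullet B)^{\phi\bullet\psi} &\cong \kk\langle A_1\otimes B_1\rangle\big/\bigl((\id\otimes(\phi^{-1}\otimes\psi^{-1}))\,S_{(23)}(R(A)\otimes R(B))\bigr).
\end{align*}
Plugging the first two into the definition \eqref{eqn:bullet} of the bullet product, I obtain
\[
A^\phi\bullet B^\psi \;\cong\; \kk\langle A_1\otimes B_1\rangle\big/\bigl(S_{(23)}\bigl((\id\otimes\phi^{-1})R(A)\otimes (\id\otimes\psi^{-1})R(B)\bigr)\bigr).
\]

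It remains to see that the two relation spaces inside $(A_1\otimes B_1)^{\otimes 2}$ coincide. On a typical generator $r\otimes s$ with $r=\sum a\otimes a'\in R(A)$ and $s=\sum b\otimes b'\in R(B)$, both recipes yield the same element
\[
\sum a\otimes b\otimes \phi^{-1}(a')\otimes \psi^{-1}(b'),
\]
since $S_{(23)}$ only swaps the two middle tensor slots while the automorphisms $\phi^{-1}$ and $\psi^{-1}$ act on the innermost slot of $A_1^{\otimes 2}$ and $B_1^{\otimes 2}$ respectively, which after reshuffling become the last two slots. Hence the identity map on $A_1\otimes B_1$ extends to a graded algebra isomorphism $A^\phi\bullet B^\psi\cong (A\bullet B)^{\phi\bullet\psi}$. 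The only mildly delicate point is the quadraticity statement for $C^\phi$ invoked at the outset; once that is in hand, the remainder is a direct bookkeeping of $S_{(23)}$ against the diagonal action $\id\otimes\phi^{-1}\otimes\id\otimes\psi^{-1}$.
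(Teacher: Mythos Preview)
Your argument is correct and follows essentially the same route as the paper: both proofs reduce to the presentation $C^\phi\cong\kk\langle C_1\rangle/((\id\otimes\phi^{-1})R(C))$ for quadratic $C$ (the paper cites this as \cite[Lemma~5.1]{Mori-Smith2016}) and then commute $S_{(23)}$ past $\id\otimes\phi^{-1}\otimes\id\otimes\psi^{-1}$. The one small step the paper includes that you omit is an explicit check that $\phi_1\otimes\psi_1$ actually extends to a graded automorphism of $A\bullet B$, i.e.\ that $(\phi_1\otimes\psi_1)^{\otimes 2}$ preserves $S_{(23)}(R(A)\otimes R(B))$; this is immediate from the same $S_{(23)}$ commutation, but since the statement only asserts that $\phi\bullet\psi$ exists, it is worth a sentence.
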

\begin{proof}
We note that
\begin{align*}
(\phi_1 \otimes \psi_1 \otimes \phi_1 \otimes \psi_1) \circ  S_{(23)} (R(A) \otimes R(B)) &= S_{(23)} ( \phi( R(A) ) \otimes \psi(R(B)))\\
&= S_{(23)} ( R(A) \otimes R(B)).
\end{align*}
Therefore, $\phi \bullet \psi$ preserves the degree-2 relations in $A \bullet B$, and is a graded automorphism of $A \bullet B$. 
For the isomorphism $A^\phi \bullet B^\psi \cong (A \bullet B)^{ \phi \bullet \psi}$, by \cite[Lemma 5.1]{Mori-Smith2016} the twist $(A \bullet B)^{\phi \bullet \psi}$ has relations 
\begin{align*} 
\left( \left( \id \otimes \id\right) \otimes \left( \phi_1 \otimes \psi_1\right)^{-1} \right) \circ S_{(23)} \left( R(A) \otimes R(B) \right) &= S_{(23)} \left( \left(\id \otimes \phi_1^{-1} \otimes \id \otimes \psi_1^{-1} \right) \left( R(A) \otimes R(B) \right) \right) \\
&= S_{(23)} \left( \left( \id \otimes \phi_1^{-1} \right) \left( R(A) \right) \otimes \left( \id \otimes \psi_1^{-1} \right) \left( R(B) \right) \right).  
\end{align*}
The last line gives exactly the relations of $A^\phi \bullet B^\psi$.
\end{proof}

In particular, we may consider the bullet product $A^!\bullet A$ of a quadratic algebra $A$ and its Koszul dual $A^!$  \cite{Shelton-Tingey}, which is a bialgebra with matrix comultiplication defined below on the generators $(A^!)_1\otimes A_1$. Choose a basis $\{x_1,\ldots,x_n\}$ for $A_1$ and let $\{x^1,\ldots,x^n\}$ be the dual basis for $(A^!)_1=(A_1)^*$. Write $z^{k}_j= x^k \otimes x_j\in (A^!)_1\otimes A_1$ as the generators for $A^!\bullet A$. Then the comultiplication on $A^!\bullet A$ is given by 
\begin{equation}
\label{eq:comultManin}
    \Delta(z^{k}_j)=\sum_{1 \leq i \leq n} z^i_j\otimes z^k_i.
\end{equation}

\noindent We now list some of the basic properties of universal bialgebras and universal quantum groups. In what follows, for an algebra (resp.~a coalgebra) $A$, we denote by $A^{op}$ a $\kk$-vector space $A$ with opposite multiplication, and by $A^{cop}$ a $\kk$-algebra $A$ with opposite comultiplication, respectively.

\begin{proposition}\cite{Manin2018}
\label{prop:2}
Given a quadratic algebra $A$, we have isomorphisms of universal bialgebras:
\begin{enumerate}
\item $\underline{\rm end}(A^{op})\cong \underline{\rm end}(A)^{op}$ and $\underline{\rm end}(A^!)\cong \underline{\rm end}(A)^{cop}$. 
\item $\underline{\rm end}^r(A)\cong A\bullet A^!$ and $\underline{\rm end}^l(A)\cong A^!\bullet A$. 
\item $\underline{\rm end}^r(A)\cong \underline{\rm end}^l(A)^{cop}$. 
\item $\underline{\rm end}^r(A)\cong \underline{\rm end}^l(A^!)$ and $\underline{\rm end}^l(A)\cong \underline{\rm end}^r(A^!)$.
\end{enumerate}
\end{proposition}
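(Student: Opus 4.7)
The plan is to establish all four isomorphisms through the universal properties of $\underline{\rm end}^l(A)$ and $\underline{\rm end}^r(A)$, exploiting how the three operations $(-)^{op}$, $(-)^{cop}$, and $(-)^!$ interact with graded coactions on a quadratic algebra. Part (2) is the cornerstone: I would fix a basis $\{x_1,\ldots,x_n\}$ of $A_1$ with dual basis $\{x^1,\ldots,x^n\}$ of $(A^!)_1$, and observe that any right graded coaction $\tau: A\to A\otimes B$ by a bialgebra $B$ on $A$ must take the form $\tau(x_j)=\sum_i x_i\otimes b^i_j$ for some matrix $(b^i_j)\subseteq B_1$. The constraint that $\tau$ descends to the quotient by $R(A)$ is precisely that the quadratic expressions $(b^i_j b^k_l)$ satisfy the orthogonal relations to $R(A)^{\perp}\subseteq (A^!\otimes A^!)_1$, which after tracking the flip $S_{(23)}$ in \eqref{eqn:bullet} are exactly the defining relations of $A\bullet A^!$. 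Together with the matrix comultiplication \eqref{eq:comultManin} (taken with appropriate source/target convention for the right-coaction case), this identifies $\underline{\rm end}^r(A)$ with $A\bullet A^!$; the left-coaction analogue $\underline{\rm end}^l(A)\cong A^!\bullet A$ follows symmetrically.

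For part (1), I would use the universal property directly. Observe that the assignment $\tau\mapsto \tau$ (same underlying $\kk$-linear map) is a bijection between left coactions of a bialgebra $B$ on $A$ and left coactions of $B^{op}$ on $A^{op}$, since flipping the multiplication on both sides preserves the coassociativity and counitality axioms. Universality then forces $\underline{\rm end}(A^{op})\cong \underline{\rm end}(A)^{op}$. For the Koszul-dual statement, the pairing $(A^!)_1\otimes A_1\to \kk$ together with the isomorphism $R(A^!)\cong R(A)^{\perp}$ yields a natural bijection between left coactions of $B$ on $A$ and left coactions of $B^{cop}$ on $A^!$ (on generators, $\tau(x_i)=\sum_j b^j_i\otimes x_j$ transforms to $\tau^!(x^i)=\sum_j b^i_j\otimes x^j$, with the transposition accounting for the $^{cop}$). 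Universality yields $\underline{\rm end}(A^!)\cong \underline{\rm end}(A)^{cop}$.

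Parts (3) and (4) are formal consequences. For (3), right coactions of $B$ on $A$ are in natural bijection with left coactions of $B^{cop}$ on $A$, so universality gives $\underline{\rm end}^r(A)\cong \underline{\rm end}^l(A)^{cop}$. For (4), applying (1) and (3) yields
\[
\underline{\rm end}^l(A^!)\cong \underline{\rm end}^l(A)^{cop}\cong \underline{\rm end}^r(A),
\]
and symmetrically $\underline{\rm end}^r(A^!)\cong \underline{\rm end}^l(A)$. The main obstacle is the bookkeeping in part (2): verifying that the relations on the matrix coefficients $(b^i_j)$ forced by compatibility with $R(A)$ coincide, under the pairing $(A^!)_1\otimes A_1\to\kk$ and the flip $S_{(23)}$, with the relations $S_{(23)}(R(A)\otimes R(A^!))$ defining the bullet product. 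Once this identification is nailed down, everything else reduces to a clean chain of universal-property arguments.
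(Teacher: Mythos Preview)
The paper does not supply its own proof of this proposition: it is stated with a citation to \cite{Manin2018} and used as a black box. Your proposal is therefore not being compared against an argument in the present paper but against the standard treatment in Manin's book, and your outline follows that treatment closely. The strategy of establishing (2) first via the explicit bullet-product presentation and matrix comultiplication, then deducing (1), (3), (4) from universal properties and the symmetries $(-)^{op}$, $(-)^{cop}$, $(-)^!$, is exactly how Manin organizes the material; the bookkeeping you flag in part (2)---matching the relations forced on the coefficients $b^i_j$ with $S_{(23)}(R(A)\otimes R(A)^{\perp})$---is indeed the only nontrivial verification, and your description of it is accurate.

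One small point: in your derivation of (4) you invoke (1) for $\underline{\rm end}^l$ specifically, which is fine since the paper's convention (stated just before the proposition) is that unadorned $\underline{\rm end}$ means the statement holds for both left and right versions. You might also note that (4) can alternatively be read off directly from (2) together with $(A^!)^!\cong A$, since $\underline{\rm end}^l(A^!)\cong (A^!)^!\bullet A^!\cong A\bullet A^!\cong \underline{\rm end}^r(A)$; this avoids passing through (3) and is arguably the quickest route once (2) is in hand.
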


The following results can be derived from \Cref{prop:2} and the universal property of a Hopf envelope. 

\begin{proposition}\label{prop:30}
Given a quadratic algebra $A$, we have isomorphisms of universal quantum groups:
\begin{enumerate}
\item[(1)] $\underline{\rm aut}(A)\cong \mathcal H(\underline{\rm end}(A))$.
\item[(2)] $\underline{\rm aut}(A^{op})\cong \underline{\rm aut}(A)^{op}$ and $\underline{\rm aut}(A^!)\cong \underline{\rm aut}(A)^{cop}$.
\item[(3)] $\underline{\rm aut}^r(A)\cong \underline{\rm aut }^l(A)^{cop}$.
    \item[(4)] $\underline{\rm aut}^r(A)\cong \underline{\rm aut}^l(A^!)$ and $\underline{\rm aut}^l(A)\cong \underline{\rm aut}^r(A^!)$.
\end{enumerate}
\label{pro:endandaut}
\end{proposition}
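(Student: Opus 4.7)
The plan is to deduce all four statements from \Cref{prop:2} together with the universal properties of $\underline{\rm end}(A)$ and of the Hopf envelope. Part (1) is the key step, and parts (2)--(4) will follow formally by combining (1) with the analogous bialgebra identities in \Cref{prop:2} once I establish that the Hopf envelope construction commutes with the $op$ and $cop$ functors on bialgebras.

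For (1), I would first observe that $\mathcal H(\underline{\rm end}(A))$ carries a graded coaction on $A$, obtained by composing the universal bialgebra coaction $A\to \underline{\rm end}(A)\otimes A$ with $i\otimes \id$, where $i\colon \underline{\rm end}(A)\to \mathcal H(\underline{\rm end}(A))$ is the natural bialgebra map. I would then argue that this Hopf algebra coaction is universal among graded Hopf algebra coactions on $A$. Given any Hopf algebra $H$ with such a coaction, the universal property of $\underline{\rm end}(A)$ produces a bialgebra map $\underline{\rm end}(A)\to H$, which extends uniquely to a Hopf algebra map $\mathcal H(\underline{\rm end}(A))\to H$ by the universal property of the Hopf envelope. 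Uniqueness of both factorizations forces $\mathcal H(\underline{\rm end}(A))\cong \underline{\rm aut}(A)$. As a preparatory step for the rest, I would also verify that for any bialgebra $B$ there are natural isomorphisms $\mathcal H(B^{op})\cong \mathcal H(B)^{op}$ and $\mathcal H(B^{cop})\cong \mathcal H(B)^{cop}$: a bialgebra map $B^{op}\to K$ into a Hopf algebra $K$ corresponds to a bialgebra map $B\to K^{op}$, and $K^{op}$ is again a Hopf algebra with the same antipode, so $\mathcal H(B)^{op}$ satisfies the universal property defining $\mathcal H(B^{op})$; the $cop$ version is entirely analogous.

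Parts (2), (3), and (4) will then follow by chaining these isomorphisms with \Cref{prop:2}. For example, I would write
\[
\underline{\rm aut}(A^{op})\cong \mathcal H(\underline{\rm end}(A^{op}))\cong \mathcal H(\underline{\rm end}(A)^{op})\cong \mathcal H(\underline{\rm end}(A))^{op}\cong \underline{\rm aut}(A)^{op},
\]
and the remaining isomorphisms go through identically using the corresponding items of \Cref{prop:2}. I do not foresee a real obstacle here; the only conceptual content is the compatibility of $\mathcal H$ with $op$ and $cop$, and once that is in place everything reduces to diagram chasing with universal properties.
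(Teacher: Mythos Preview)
Your plan is exactly the route the paper indicates: it states that these isomorphisms ``can be derived from \Cref{prop:2} and the universal property of a Hopf envelope'' and gives no further detail, so your argument for (1) and your chain of isomorphisms for (2)--(4) are precisely the intended proof.

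One slip to fix: it is not true that $K^{op}$ is ``again a Hopf algebra with the same antipode.'' For a Hopf algebra $K$ with antipode $S$, the bialgebra $K^{op}$ (and likewise $K^{cop}$) is a Hopf algebra only when $S$ is bijective, and in that case its antipode is $S^{-1}$, not $S$; it is $K^{op,cop}$ that always has antipode $S$. This does not derail your strategy, but your verification that $\mathcal H(B^{op})\cong \mathcal H(B)^{op}$ and $\mathcal H(B^{cop})\cong \mathcal H(B)^{cop}$ needs to be phrased with this in mind (e.g.\ by invoking bijectivity of the antipode where needed, or by running the universal-property argument without passing through $K^{op}$ as an intermediate Hopf algebra).
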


As a consequence, we have the following lemma. 

\begin{lemma}
\label{Manin-twisting-conditions}
For any quadratic algebra $A$, its universal bialgebra $\underline{\rm end}(A)$ and its universal quantum group $\underline{\rm aut}(A)$ both satisfy the twisting conditions (\textbf{T1})-(\textbf{T2}).
\end{lemma}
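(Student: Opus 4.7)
The plan is to verify the twisting conditions for $\underline{\rm end}(A)$ directly from its explicit presentation via the bullet product, and then deduce the statement for $\underline{\rm aut}(A)$ by invoking \Cref{t1-t3-hopf-env} together with the identification $\underline{\rm aut}(A) \cong \mathcal{H}(\underline{\rm end}(A))$ from \Cref{pro:endandaut}(1).

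First, I would handle $\underline{\rm end}^l(A)$. By \Cref{prop:2}(2), $\underline{\rm end}^l(A) \cong A^!\bullet A$. From the presentation \eqref{eqn:bullet}, $A^!\bullet A$ is a quotient of the free algebra $\kk\langle (A^!)_1\otimes A_1\rangle$ by the relations $S_{(23)}(R(A^!)\otimes R(A))$, which are homogeneous of degree $2$. Assigning each matrix-coefficient generator $z^k_j = x^k\otimes x_j$ the grading degree $1$ therefore produces a well-defined $\mathbb{N}$-grading (hence a $\mathbb{Z}$-grading, with trivial negatively-graded components), verifying (\textbf{T1}). For (\textbf{T2}), the formula \eqref{eq:comultManin} gives $\Delta(z^k_j) = \sum_i z^i_j \otimes z^k_i \in B_1\otimes B_1$. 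Since $\Delta$ is a $\kk$-algebra homomorphism and since the componentwise multiplication on $B\otimes B$ satisfies $(B_n\otimes B_n)\cdot(B_m\otimes B_m) \subseteq B_{n+m}\otimes B_{n+m}$, a routine induction on the generating degree yields $\Delta(B_n)\subseteq B_n\otimes B_n$ for all $n\geq 0$.

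To transfer the result to $\underline{\rm end}^r(A)$, I would invoke the isomorphism $\underline{\rm end}^r(A)\cong \underline{\rm end}^l(A)^{cop}$ from \Cref{prop:2}(3). Passing to $(-)^{cop}$ leaves the algebra (and hence its $\mathbb{Z}$-grading) unchanged, so (\textbf{T1}) is immediate; condition (\textbf{T2}) is preserved because the subspace $B_n\otimes B_n$ is invariant under the flip map swapping the tensor factors. Thus both $\underline{\rm end}^l(A)$ and $\underline{\rm end}^r(A)$ satisfy the twisting conditions.

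Finally, for $\underline{\rm aut}^l(A)$ and $\underline{\rm aut}^r(A)$, I would combine the identifications $\underline{\rm aut}^l(A)\cong \mathcal{H}(\underline{\rm end}^l(A))$ and $\underline{\rm aut}^r(A)\cong \mathcal{H}(\underline{\rm end}^r(A))$ from \Cref{pro:endandaut}(1) with \Cref{t1-t3-hopf-env}(1), which says that the Hopf envelope of any bialgebra satisfying the twisting conditions again satisfies them (and in fact also (\textbf{T3}), by \Cref{cor:T3}). There is no serious obstacle; the only subtle point is that the $\mathbb{N}$-grading on $\underline{\rm end}(A)$ genuinely extends to a $\mathbb{Z}$-grading on $\underline{\rm aut}(A)$, because the adjoined antipode generators $S^i(V^{(i)})$ in the Takeuchi construction of $\mathcal{H}(-)$ recalled before \Cref{t1-t3-hopf-env} acquire negative internal degrees, in accordance with the assignment $\deg S(v) = -\deg v$ used in its proof.
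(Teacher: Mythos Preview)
Your proposal is correct and follows essentially the same route as the paper: invoke \Cref{prop:2}(2) and the comultiplication formula \eqref{eq:comultManin} to verify (\textbf{T1})--(\textbf{T2}) for $\underline{\rm end}(A)$, then use $\underline{\rm aut}(A)\cong\mathcal H(\underline{\rm end}(A))$ from \Cref{pro:endandaut}(1) together with \Cref{t1-t3-hopf-env} to handle $\underline{\rm aut}(A)$. The paper's proof is terser and does not separately treat the right version via $(-)^{cop}$, but the argument is the same.
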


\begin{proof}
By \Cref{prop:2} (2) and the comultiplication \eqref{eq:comultManin}, $\underline{\rm end}(A)$ satisfies (\textbf{T1}) and (\textbf{T2}). Moreover, by \Cref{prop:30} (1), $\underline{\rm aut}(A)$ is the Hopf envelope of $\underline{\rm end}(A)$. The result follows from \Cref{t1-t3-hopf-env}. 
\end{proof}

\subsection{Zhang twists and 2-cocycle twists of Manin's universal quantum groups}

Let $A$ be a quadratic algebra. We denote by ${\rm Aut}_{\gr}(A)$ the group of all graded automorphisms of $A$. Let $\underline{\rm end}^l(A)$ be the universal bialgebra that left coacts on $A$ universally via $\rho_A: A\to \underline{\rm end}^l(A)\otimes A$, preserving the grading. Since $\underline{\rm end}^l(A)$ is again a quadratic algebra, we can write ${\rm Aut}_{\gr}(\underline{\rm end}^l(A))$ for the group of all its graded automorphisms. We will use the following universal property of 
$\underline{\rm end}^l(A)$ \emph{as a universal graded algebra}, whose proof is based on \cite[Lemma 6.6]{Manin2018}. 

\begin{lemma}
\label{lem:30}
Let $A$ and $B$ be quadratic algebras and $\rho: A\to B\otimes A$ be an algebra morphism with $\rho(A_1)\subseteq B_1\otimes A_1$. Then there exists a unique graded algebra morphism $\gamma: \underline{\rm end}^l(A)\to B$ such that the following diagram commutes: 
\[
\xymatrix{
A\ar[r]^-{\rho_A}\ar[dr]_-{\rho} & \underline{\rm end}^l(A)\otimes A\ar[d]^-{\gamma\otimes \id} \\
& B\otimes A.
}
\]
\end{lemma}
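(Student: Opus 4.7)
The plan is to construct $\gamma$ explicitly on the generators of $\underline{\rm end}^l(A)$ as forced by the commutative diagram, and then verify that this assignment descends to the quadratic quotient defining $\underline{\rm end}^l(A)$. Fix a basis $\{x_1,\ldots,x_n\}$ of $A_1$ with dual basis $\{x^1,\ldots,x^n\}$ of $A_1^*$. By \Cref{prop:2}(2), we have the bullet-product presentation $\underline{\rm end}^l(A)=A^!\bullet A$, generated by the elements $z^i_j=x^i\otimes x_j$, with universal coaction $\rho_A(x_j)=\sum_i z^i_j\otimes x_i$. Since $\rho(A_1)\subseteq B_1\otimes A_1$, we may write $\rho(x_j)=\sum_i b^i_j\otimes x_i$ for uniquely determined $b^i_j\in B_1$. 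Commutativity of the diagram evaluated at $x_j$ then forces $\gamma(z^i_j)=b^i_j$ for all $i,j$, which simultaneously proves uniqueness and identifies the only candidate for $\gamma$ on generators.

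For existence, the plan is to extend this assignment to a graded algebra morphism $\tilde\gamma\colon \kk\langle z^i_j\rangle\to B$ from the free algebra on the $z^i_j$, and then show that $\tilde\gamma$ annihilates the defining relations of $\underline{\rm end}^l(A)$. By \eqref{eqn:bullet}, the ideal of relations defining $A^!\bullet A$ is generated by the image under $S_{(23)}$ of $R(A^!)\otimes R(A)$; that is, by the elements
\[
\sum_{i,j,k,l} d_{ij}\,c^{kl}\,z^i_k z^j_l,
\]
where $\omega=\sum d_{ij}\,x^i\otimes x^j$ ranges over $R(A^!)=R(A)^\perp$ and $r=\sum c^{kl}\,x_k\otimes x_l$ ranges over $R(A)$.

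The vanishing of $\tilde\gamma$ on these elements is exactly what the hypothesis that $\rho$ is an algebra morphism provides. Indeed, for $r\in R(A)$ as above, expanding $\rho(r)$ using $\rho(x_k)=\sum_i b^i_k\otimes x_i$ gives
\[
0 \;=\; \rho(r) \;=\; \sum_{i,j,k,l} c^{kl}\, b^i_k b^j_l\otimes x_i x_j \quad\text{in } B_2\otimes A_2.
\]
Since $\omega\in R(A)^\perp$ pairs trivially with $R(A)$, it descends to a well-defined functional $\bar\omega\colon A_2\to\kk$. Applying $\id_{B_2}\otimes\bar\omega$ to the displayed equality yields $\sum d_{ij}c^{kl}b^i_k b^j_l=0$ in $B$, which is exactly $\tilde\gamma$ of the corresponding relation. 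Hence $\tilde\gamma$ descends to a graded algebra map $\gamma\colon\underline{\rm end}^l(A)\to B$; the diagram commutes on the generators $x_j$ of $A$ by construction, and then on all of $A$ because both $(\gamma\otimes\id)\circ\rho_A$ and $\rho$ are algebra morphisms.

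The main obstacle is keeping track of indices in the bullet-product presentation and recognizing that the vanishing of $\rho(r)$ in $B_2\otimes A_2$ is precisely strong enough to be detected after pairing with every $\omega\in R(A)^\perp$. Once the Koszul-duality dictionary between the relations of $A^!\bullet A$ and the perp pairing is in place, the argument is a minor variant of Manin's original universal-property proof, the new input being that $B$ need only be a graded algebra rather than a bialgebra and that no coaction compatibility on $B$ is being assumed.
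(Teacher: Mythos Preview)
Your proof is correct and follows essentially the standard argument the paper defers to (Manin's Lemma 6.6): determine $\gamma$ on the generators $z^i_j$ by the forced equations $\gamma(z^i_j)=b^i_j$, and then check that the quadratic relations $S_{(23)}(R(A)^\perp\otimes R(A))$ of $A^!\bullet A$ are sent to zero by pairing the identity $\rho(r)=0$ in $B_2\otimes A_2$ against any $\omega\in R(A)^\perp$. The paper itself provides no independent proof, only the citation, so there is nothing further to compare.
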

For any $\phi\in {\rm Aut}_{\gr}(A)$, the following composition of algebra morphisms
\[\rho^\phi_A:=\left( A\xrightarrow{\phi} A\xrightarrow{\rho_A} \underline{\rm end}^l(A)\otimes A\right)\]
satisfies $\rho^\phi_A(A_1)\subseteq \underline{\rm end}^l(A)_1\otimes A_1$. Therefore by \Cref{lem:30}, there is a unique algebra endomorphism $\varphi$ of $\underline{\rm end}^l(A)$ such that the following diagram commutes:
\begin{equation}\label{eq:dia}
\xymatrix{
A\ar[r]^-{\rho_A}\ar[d]_-{\phi} & \underline{\rm end}^l(A)\otimes A \ar[d]^-{\varphi\otimes \id}\\
A\ar[r]^-{\rho_A}&  \underline{\rm end}^l(A)\otimes A.
}
\end{equation}
In what follows, we will denote such a map by $\underline{\rm end}^l(\phi)= \varphi$.

On the other hand, note that any graded automorphism $\phi$ of $A$ naturally gives a graded automorphism of the Koszul dual $A^!$ by extending the dual map $(\phi|_{A_1})^*$ on the generators of $(A^!)_1$ to $A^!$. We denote such a dual graded automorphism of $A^!$ by $\phi^!$. One can easily check that $(-)^!: {\rm Aut}_{\gr}(A)\to {\rm Aut}_{\gr}(A^!)$ is a group anti-isomorphism.

By \Cref{prop:2}, we have isomorphisms $
\underline{\rm end}^l(A) \cong A^!\bullet A \cong \underline{\rm end}^r(A^!)$. 
A similar result to \Cref{lem:30} regarding the universal right coaction of $\underline{\rm end}^r(A^!)$ on $A^!$ yields another graded algebra endomorphism of $\underline{\rm end}^r(A^!) \cong \underline{\rm end}^l(A)$, denoted by $\underline{\rm end}^r(\phi^!)$, such that the following diagram commutes:
\[
\xymatrix{
A^!\ar[r]^-{\rho_{A^!}}\ar[d]_-{\phi^!} & A^!\otimes \underline{\rm end}^r(A^!)\ar[d]^-{\id \otimes \underline{\rm end}^r(\phi^!)}\\
A^!\ar[r]^-{\rho_{A^!}}&  A^!\otimes \underline{\rm end}^r(A^!).
}
\]
Moreover, by the universal property, one can show that $\underline{\rm end}^l(-)$ and $\underline{\rm end}^r((-)^{-1})^!$ are indeed injective group homomorphisms from ${\rm Aut}_{\gr}(A)$ to ${\rm Aut}_{\gr}(\underline{\rm end}^l(A))$ with commuting images. Similarly, we can define $\underline{\rm end}^r(\phi)$ and $\underline{\rm end}^l(\phi^!)$ as graded algebra endomorphisms of 
$\underline{\rm end}^r(A) \cong A\bullet A^! \cong \underline{\rm end}^l(A^!)$.  

We first investigate the Koszul dual of a Zhang twist of a quadratic algebra. In fact, this is a twist by the dual automorphism on the other side. This phenomenon extends to $N$-Koszul algebras by an analogous proof.

\begin{lemma}
\label{lemma:twist-of-dual}
\cite[Proposition 5.5]{UK2010}
If $A$ is a quadratic algebra and $\phi \in \Aut_{\gr}(A)$ is a graded automorphism of $A$, then the dual automorphism $\phi^!$ is a graded automorphism of $A^!$ so that $$\left( \prescript{\phi}{}{A}\right)^! \cong \prescript{ \left( \phi^{-1} \right)^{!}}{}{\left( A^!\right)} \cong \left( A^!\right)^{\phi^!}.$$\end{lemma}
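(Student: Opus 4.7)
The plan is to verify all three Koszul duals are isomorphic by comparing their defining quadratic relations in $A_1^*\otimes A_1^*$. First I will establish that $\phi^!$ is well-defined: since $\phi$ is a graded automorphism of $A=\kk\langle A_1\rangle/(R(A))$, the map $\phi|_{A_1}\otimes \phi|_{A_1}$ preserves $R(A)$, and dualizing shows $\phi^*\otimes \phi^*$ preserves the annihilator $R(A)^\perp\subseteq A_1^*\otimes A_1^*$ (using the standard identity $(f(W))^\perp=(f^*)^{-1}(W^\perp)$ for a linear automorphism $f$). Hence the dual map $(\phi|_{A_1})^*$ extends to a graded algebra automorphism $\phi^!$ of $A^!$, as already claimed in the excerpt.

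Next I would write down the quadratic relations of each side explicitly. Following the same computation that underlies \Cref{bullet-twist} (compare \cite{Mori-Smith2016}), for any graded automorphism $\psi$ of a quadratic algebra $C$ the left Zhang twist $\prescript{\psi}{}{C}$ is quadratic with relations
\[
R(\prescript{\psi}{}{C})=\bigl(\psi^{-1}\otimes \id\bigr)\bigl(R(C)\bigr)\subseteq C_1\otimes C_1,
\]
and the right twist satisfies $R(C^\psi)=\bigl(\id\otimes \psi^{-1}\bigr)(R(C))$. Applied to $A$ with $\psi=\phi$, the relations of $\prescript{\phi}{}{A}$ are $(\phi^{-1}\otimes \id)(R(A))$. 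Taking Koszul duals and using the orthogonality identity above, the relations of $\bigl(\prescript{\phi}{}{A}\bigr)^!$ are
\[
\bigl[(\phi^{-1}\otimes \id)(R(A))\bigr]^{\perp}=\bigl((\phi^{-1})^{*\,-1}\otimes \id\bigr)(R(A)^{\perp})=\bigl(\phi^{*}\otimes \id\bigr)(R(A)^{\perp}).
\]

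For the middle term, $\bigl(\phi^{-1}\bigr)^{!}$ restricts on $(A^!)_1$ to $(\phi^{-1}|_{A_1})^{*}=(\phi^{*})^{-1}$, so by the formula for left twists applied now to $C=A^!$ with $\psi=(\phi^{-1})^!$, the relations of $\prescript{(\phi^{-1})^!}{}{(A^!)}$ are $((\phi^{-1})^{!\,-1}\otimes \id)(R(A)^{\perp})=(\phi^{*}\otimes \id)(R(A)^{\perp})$, matching $\bigl(\prescript{\phi}{}{A}\bigr)^!$ on the nose. For the rightmost term, I would invoke the natural isomorphism $C^{\psi}\cong \prescript{\psi^{-1}}{}{C}$ from \cite[Lemma 4.1]{Lecoutre-Sierra2019} with $C=A^!$ and $\psi=\phi^!$, and then observe that $(\phi^{!})^{-1}=(\phi^{-1})^{!}$ (which holds whether $(-)^!$ is a group homomorphism or anti-homomorphism), giving $(A^!)^{\phi^!}\cong \prescript{(\phi^{-1})^!}{}{(A^!)}$.

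Chaining the three identifications yields the desired isomorphisms. The only real obstacle is bookkeeping: keeping straight which copy of $A_1^*$ the dual map $\phi^*$ acts on, tracking the inverses introduced by passing between left/right twists and by dualizing, and using the anti-homomorphism property of $(-)^!$ consistently. Once the recipe $R(\prescript{\psi}{}{C})=(\psi^{-1}\otimes \id)(R(C))$ and the orthogonality $(f(W))^{\perp}=(f^{*})^{-1}(W^{\perp})$ are both in hand, the proof reduces to matching the two expressions for relations in $A_1^*\otimes A_1^*$.
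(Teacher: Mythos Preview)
Your proposal is correct and follows essentially the same approach as the paper: both arguments compute the quadratic relations on each side and reduce the key identification to the orthogonality identity $(f(W))^{\perp}=(f^{*})^{-1}(W^{\perp})$ (the paper phrases this as a chain of equivalences showing $(\phi^{!}\otimes\id)(R^{\perp})=\bigl((\phi^{-1}\otimes\id)(R)\bigr)^{\perp}$), and both invoke \cite[Lemma~5.1]{Mori-Smith2016} for the relations of a Zhang twist. Your write-up is slightly more explicit about the well-definedness of $\phi^{!}$ and about using \cite[Lemma~4.1]{Lecoutre-Sierra2019} for the right-to-left passage, but the substance is the same.
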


The following result provides us with a complete classification of all twisting pairs on Manin's universal bialgebras. As a consequence of \Cref{cor:lifting} and \Cref{prop:30} (1), we obtain twisting pairs on the corresponding universal quantum groups. 

\begin{lemma}
\label{Manin-twisting-pairs}
Let $A$ be a quadratic algebra. For any $\phi\in {\rm Aut}_{\gr}(A)$, we have: 
\begin{enumerate}
    \item $(\underline{\rm end}^r((\phi^{-1})^!), \underline{\rm end}^l(\phi))$ is a twisting pair for $\underline{\rm end}^l(A)$. 
    \item $(\underline{\rm end}^r(\phi), \underline{\rm end}^l((\phi^{-1})^!))$ is a twisting pair for $\underline{\rm end}^r(A)$. 
\end{enumerate}
Moreover, every twisting pair of Manin's universal bialgebra and quantum group is given in this way.
\end{lemma}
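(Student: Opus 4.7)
The plan for part (1) is to verify the twisting pair conditions (\textbf{P1})--(\textbf{P2}) directly on the matrix coalgebra generators $z_j^k$ of $\underline{\rm end}^l(A)\cong A^!\bullet A$. Writing $\rho_A(x_j)=\sum_k z_j^k\otimes x_k$ and $\phi(x_j)=\sum_l \phi_j^l\, x_l$, the defining diagram \eqref{eq:dia} yields $\underline{\rm end}^l(\phi)(z_j^k)=\sum_l \phi_j^l\, z_l^k$, while the analogous diagram for the universal right coaction of $\underline{\rm end}^r(A^!)$ on $A^!$, combined with the identification $\underline{\rm end}^l(A)\cong \underline{\rm end}^r(A^!)$ from \Cref{prop:2}(4), gives $\underline{\rm end}^r((\phi^{-1})^!)(z_j^k)=\sum_l (\phi^{-1})_l^k\, z_j^l$. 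Since $\Delta(z_j^k)=\sum_i z_j^i\otimes z_i^k$ separates the lower and upper indices, and the two maps act independently on these indices, conditions (\textbf{P1}) and (\textbf{P2}) reduce to routine matrix identities; in particular (\textbf{P2}) uses only $\sum_l \phi_j^l(\phi^{-1})_l^k=\delta_j^k$. Invertibility of $\phi$ also ensures both maps are graded algebra automorphisms. Statement (2) then follows by applying (1) to the Koszul dual $A^!$ with the automorphism $\psi=(\phi^{-1})^!\in \Aut_{gr}(A^!)$: using that $(-)^!$ is an anti-isomorphism of groups one checks $(\psi^{-1})^!=\phi$, and transporting the resulting twisting pair from $\underline{\rm end}^l(A^!)$ to $\underline{\rm end}^r(A)$ via \Cref{prop:2}(4) produces exactly $(\underline{\rm end}^r(\phi),\underline{\rm end}^l((\phi^{-1})^!))$.

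For the ``moreover'' part, the plan is to combine \Cref{thm:twistingpair} with the universal property underlying Manin's construction. That theorem represents any twisting pair of $\underline{\rm end}^l(A)$ as $(\Xi^r[\pi_1],\Xi^l[\pi_2])$ for algebra homomorphisms $\pi_1,\pi_2\colon \underline{\rm end}^l(A)\to \kk$ satisfying $(\pi_1\otimes \pi_2)\circ \Delta=\varepsilon=(\pi_2\otimes \pi_1)\circ \Delta$. Such a $\pi$ is determined by the scalar matrix $\alpha_j^k:=\pi(z_j^k)$, and the key observation is that $\pi$ extends to an algebra map on $A^!\bullet A$ if and only if $(\alpha_j^k)$ defines a graded algebra endomorphism of $A$: the bullet relations $S_{(23)}(R(A^!)\otimes R(A))$ are annihilated by $\pi\otimes \pi$ precisely when $\alpha\otimes\alpha$ stabilizes $R(A)$, using $R(A^!)=R(A)^\perp$ and the quadratic involutivity $A^{!!}=A$. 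The compatibility condition on $\pi_1,\pi_2$ then forces the associated matrices to be mutually inverse, so the corresponding endomorphisms form a pair $(\phi^{-1},\phi)$ of graded automorphisms of $A$. Unwinding the definitions of the winding maps one identifies $\Xi^r[\pi_1]=\underline{\rm end}^r((\phi^{-1})^!)$ and $\Xi^l[\pi_2]=\underline{\rm end}^l(\phi)$, reproducing the form in (1); the case of $\underline{\rm end}^r(A)$ is symmetric. The classification for the universal quantum groups $\underline{\rm aut}^{l/r}(A)\cong \mc{H}(\underline{\rm end}^{l/r}(A))$ then follows immediately from \Cref{cor:lifting}.

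The main obstacle I anticipate is establishing the bijection between algebra homomorphisms $\pi:\underline{\rm end}^l(A)\to \kk$ and graded algebra endomorphisms of $A$. The bullet relations $S_{(23)}(R(A^!)\otimes R(A))$ do not at first glance resemble a ``preserves $R(A)$'' condition—the flip $S_{(23)}$ entangles the $A^!$ and $A$ tensor factors, and unpacking the resulting bilinear pairing requires the full force of Koszul duality ($R(A^!)=R(A)^\perp$ together with $A^{!!}=A$) to collapse the condition to $\alpha\otimes\alpha$ stabilizing $R(A)$. Once this dictionary is in place, the remaining verifications amount to bookkeeping with matrix coalgebra identities, and the twisting pair compatibility becomes an immediate matrix product condition.
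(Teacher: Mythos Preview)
Your proposal is correct and follows essentially the same route as the paper: verify (\textbf{P1})--(\textbf{P2}) on the matrix-coalgebra generators $z_j^k$ using the explicit formulas for $\underline{\rm end}^l(\phi)$ and $\underline{\rm end}^r((\phi^{-1})^!)$, and for the ``moreover'' part reduce to showing that the matrix $(\pi(z_j^k))$ attached to an algebra map $\pi:\underline{\rm end}^l(A)\to\kk$ must preserve $R(A)$ via the bullet-relation identity $R(A^!\bullet A)=S_{(23)}(R(A)^\perp\otimes R(A))$. The only cosmetic differences are that you derive (2) from (1) by Koszul duality (the paper simply says ``a similar argument'') and that you phrase the classification through $\pi$ and \Cref{thm:twistingpair} rather than through $\phi_2$ and \Cref{lem:winding}, but the underlying computation is identical.
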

\begin{proof}

We check that conditions (\textbf{P1}) and (\textbf{P2}) from \Cref{defn:twisting pair} hold for (1). A similar argument can be made for (2). By \Cref{prop:2}, we know that $\underline{\rm end}^l(A)=A^!\bullet A$, so that $\underline{\rm end}^l(A)_1=A^*_1\otimes A_1$. By \Cref{lem:30}, one can check
\begin{align}\label{eq:6}
 \underline{\rm end}^l(\phi)|_{A^*_1\otimes A_1}=\id_{A^*_1}\otimes (\phi|_{A_1})   \qquad \text{and} \qquad \underline{\rm end}^r((\phi^{-1})^*)|_{A^*_1\otimes A_1}=(\phi^{-1}|_{A_1})^*\otimes\id_{A_1}.
\end{align}
It suffices to verify the conditions (\textbf{P1})-(\textbf{P2}) on the generating space $A_1^* \otimes A_1$ of $\underline{\rm end}^l(A)$, which we will do using \eqref{eq:6}. As before, we choose a basis $\{x_1,\ldots,x_n\}$ for $A_1$ with a dual basis $\{x^1,\ldots,x^n\}$ for $A_1^*$ and write 
$z^k_j=x^k \otimes x_j \in A_1^*\otimes A_1$
as the degree one generators of $\underline{\rm end}^l(A)$ for $1 \leq j,k \leq n$ (see \cite[Chapter 6]{Manin2018}). The comultiplication on $\underline{\rm end}^l(A)$ is given by \eqref{eq:comultManin}. Moreover, \eqref{eq:6} yields 
\begin{align}
 \underline{\rm end}^l(\phi)(z^k_j)&=\underline{\rm end}^l(\phi)(x^k\otimes x_j)=\sum_{1\leq s\leq n}x^k\otimes a_{js}x_s=\sum_{1\leq s\leq n} a_{js}z^k_s,\ \text{and} \label{eq:7}\\
 \underline{\rm end}^r((\phi^{-1})^*)(z^k_j)&=\underline{\rm end}^r((\phi^{-1})^*)(x^k\otimes x_j)=\sum_{1\leq l\leq n}b_{lk}x^l\otimes x_j=\sum_{1\leq l\leq n}b_{lk}z^l_j, \label{eq:e77}
\end{align}
where $\phi(x_j)=\sum_{1\leq s\leq n}a_{js}x_s$ and $(\phi^{-1})^*(x^j)=\sum_{1\leq l\leq n} b_{lj}x^l$. 
Here the matrices $(a_{ij})$ and $(b_{kl})$ are inverse to each other. Thus, we have for (\textbf{P1}):
\begin{align*}
 (\underline{\rm end}^l(\phi)\otimes \id)\circ \Delta (z^k_j)&= (\underline{\rm end}^l(\phi)\otimes \id)\left(\sum_{1\leq i\leq n} z^i_j\otimes z_i^k\right)=\sum_{1\leq i\leq n} \underline{\rm end}^l(\phi)(z^i_j)\otimes z_i^k=\sum_{1\leq i,l\leq n} a_{jl}z^i_l\otimes z^k_i\\
 &=\sum_{1\leq l\leq n}a_{jl}\Delta(z^k_l)=\Delta \left(\sum_{1\leq l\leq n}a_{jl}z^k_l\right)= \Delta\circ \underline{\rm end}^l(\phi)(z^k_j),
\end{align*}
and 
\begin{align*}
    (\id \otimes \ \underline{\rm end}^r(\phi^{-1})^!)\circ \Delta(z^k_j)&=\sum_{1\leq i\leq n} z^i_j\otimes \underline{\rm end}^r(\phi^{-1})^!(z^k_i)=\sum_{1\leq i,l\leq n} b_{lk} z^i_j\otimes z^l_i=\sum_{1\leq l\leq n} b_{lk}  \Delta(z^l_j)\\
    &=\Delta\left(\sum_{1\leq l\leq n} b_{lk}z^l_j\right)=\Delta\circ \underline{\rm end}^r(\phi^{-1})^!(z_j^k).
    \end{align*}
And for (\textbf{P2}):
\begin{align*}
    \varepsilon (\underline{\rm end}^r(\phi^{-1})^!\circ\underline{\rm end}^l(\phi)(z^k_j))&=\varepsilon \left( \underline{\rm end}^r(\phi^{-1})^! \left(\sum_{1\leq i\leq n}a_{ji}z^k_i\right) \right) =\varepsilon\left(\sum_{1\leq i,l\leq n} a_{ji}b_{lk}z_{i}^l \right) \\
    &=\sum_{1\leq i,l\leq n} a_{ji}b_{lk}\varepsilon(z_{i}^l)=\sum_{1\leq i,l\leq n} \delta_{il}a_{ji}b_{lk}
    =\sum_{1\leq i\leq n} a_{ji}b_{ik}=\delta_{jk}=\varepsilon(z^k_j).
 \end{align*}

Finally, suppose $(\phi_1,\phi_2)$ is any twisting pair of $\underline{\rm end}^l(A)=A^!\bullet A$. By (\textbf{P1}),  $\phi_2:A^!\bullet A\to A^!\bullet A$ is a right comodule map, where the comodule structure on $A^!\bullet A$ is given by its comultiplication $\Delta$. Write the degree one part of $A^!\bullet A$ as $C=A_1^*\otimes A_1$, which is isomorphic to the matrix coalgebra $M_n(\kk)^*$. By a direct computation, using (\textbf{P1}), and the definition of the comultiplication on $C$, we can write \[\phi_2|_C={\id}\otimes \phi \quad \text{for some $\phi\in \GL(A_1)$.}\] 
By \Cref{lem:winding}, we have
\[
\phi_1|_C=(\phi^{-1})^*\otimes {\id},
\]
where $(\phi^{-1})^*\in \GL(A_1^*)$. In view of \eqref{eq:6}, it remains to show that $\phi:A_1\to A_1$ can be extended to an algebra automorphism of $A$. We have
 \begin{align*}
     S_{(23)}\left(R(A)^\perp\otimes R(A)\right)&=R(A^!\bullet A)=(\phi_2\otimes \phi_2)(R(A^!\bullet A))=  (\id \otimes \phi \otimes \id \otimes \phi)\left(S_{(23)}(R(A^!)\otimes R(A))\right)\\
     &= S_{(23)}\left(R(A)^\perp\otimes (\phi\otimes \phi)R(A)\right).
 \end{align*}
Hence $(\phi\otimes \phi)R(A)=R(A)$ and $\phi$ can be extended to a graded algebra automorphism of $A=\kk\langle A_1\rangle/(R(A))$. We can similarly argue for $\underline{\rm end}^r(A)=A\bullet A^!$. Finally, twisting pairs of $\underline{\rm aut}(A)$ can be derived from \Cref{prop:30} and \Cref{cor:lifting}.
\end{proof}

As a consequence, the universal bialgebra of the Zhang twist of any quadratic algebra is isomorphic to some Zhang twist of the universal bialgebra of the original quadratic algebra.

\begin{thm}
\label{thm:twist-end}
Let $A$ be a quadratic algebra and $\phi$ be any graded automorphism of $A$. Then we have bialgebra isomorphisms
\[
\underline{\rm end}^l (A^\phi) \cong \underline{\rm end}^l(A)^{\underline{\rm end}^l(\phi)\circ \underline{\rm end}^r((\phi^{-1})^!)}\qquad \text{and} \qquad \underline{\rm end}^r (A^\phi) \cong \underline{\rm end}^r(A)^{\underline{\rm end}^r(\phi)\circ \underline{\rm end}^l((\phi^{-1})^!)}.
\]
\end{thm}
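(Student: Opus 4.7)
The plan is to assemble three ingredients already established in the paper: (i) Manin's identification $\underline{\rm end}^l(A)\cong A^!\bullet A$ from \Cref{prop:2}(2); (ii) the compatibility of Koszul duality with Zhang twists from \Cref{lemma:twist-of-dual}; and (iii) the compatibility of the bullet product with Zhang twists from \Cref{bullet-twist}. Together with the explicit description of $\underline{\rm end}^l(\phi)$ and $\underline{\rm end}^r((\phi^{-1})^!)$ on generators from the proof of \Cref{Manin-twisting-pairs}, this should yield the isomorphism directly.

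Explicitly, I would first chain the identifications
\[
\underline{\rm end}^l(A^\phi) \;=\; (A^\phi)^!\bullet A^\phi \;\cong\; (A^!)^{(\phi^{-1})^!}\bullet A^\phi \;\cong\; (A^!\bullet A)^{(\phi^{-1})^!\bullet\phi} \;=\; \underline{\rm end}^l(A)^{(\phi^{-1})^!\bullet\phi},
\]
where the first isomorphism comes from applying \Cref{lemma:twist-of-dual} to $\phi^{-1}$ (combined with $A^\phi\cong{}^{\phi^{-1}}\!A$), and the second from \Cref{bullet-twist}. It then remains to identify the twisting automorphism. Using formulas \eqref{eq:6} from the proof of \Cref{Manin-twisting-pairs}, namely $\underline{\rm end}^l(\phi)|_{A_1^*\otimes A_1}=\id_{A_1^*}\otimes\phi|_{A_1}$ and $\underline{\rm end}^r((\phi^{-1})^!)|_{A_1^*\otimes A_1}=(\phi^{-1})^!|_{A_1^*}\otimes\id_{A_1}$, the composition restricts on the degree-one generators to $(\phi^{-1})^!|_{A_1^*}\otimes\phi|_{A_1}$, which coincides with the restriction of the bullet automorphism $(\phi^{-1})^!\bullet\phi$. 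Since both are graded algebra homomorphisms agreeing on a generating set, they are equal, establishing the first isomorphism. The right-hand statement follows by the symmetric argument using $\underline{\rm end}^r(A)\cong A\bullet A^!$, or by applying $(-)^{\mathrm{cop}}$ through \Cref{prop:30}(3).

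To promote the algebra isomorphism produced by \Cref{bullet-twist} to a bialgebra isomorphism, I would observe that on both $\underline{\rm end}^l(A^\phi)$ and $\underline{\rm end}^l(A)^{(\phi^{-1})^!\bullet\phi}$ the underlying coalgebra is the matrix coalgebra on the common degree-one generating space $A_1^*\otimes A_1$ described in \eqref{eq:comultManin}; the isomorphism in \Cref{bullet-twist} is the identity on generators, and Zhang twists leave the coalgebra structure untouched by \Cref{zhang-twist-of-bialgebra}. Thus the two bialgebra structures coincide under this identification. The main obstacle I foresee is the bookkeeping in matching $(\phi^{-1})^!\bullet\phi$ with the composition in the statement, since the anti-involutive nature of $(-)^!$ makes it easy to misplace inverses or swap left and right; writing out the action on the matrix generators $z^k_j=x^k\otimes x_j$, exactly as done in the proof of \Cref{Manin-twisting-pairs}, should resolve this unambiguously.
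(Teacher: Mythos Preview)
Your proposal is correct and follows essentially the same route as the paper: both chain the identifications $\underline{\rm end}^l(A^\phi)\cong (A^\phi)^!\bullet A^\phi\cong (A^!)^{(\phi^{-1})^!}\bullet A^\phi\cong (A^!\bullet A)^{\underline{\rm end}^l(\phi)\circ\underline{\rm end}^r((\phi^{-1})^!)}$ via \Cref{prop:2}(2), \Cref{lemma:twist-of-dual}, \Cref{bullet-twist}, and the generator formulas \eqref{eq:6}, and then observe that the coalgebra structures agree. The paper additionally front-loads the remark that the twisting automorphism is a graded bialgebra automorphism (via \Cref{Manin-twisting-pairs} and \Cref{twist-and-2-cocycle}(1)) so that \Cref{zhang-twist-of-bialgebra} applies, a point you handle implicitly.
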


\begin{proof}
We show this for $\underline{\rm end}^l$, as the argument for $\underline{\rm end}^r$ follows similarly. By \Cref{Manin-twisting-conditions}, $\underline{\rm end}^l(A)$ satisfies the twisting conditions. Then \Cref{Manin-twisting-pairs} implies that $(\underline{\rm end}^r((\phi^{-1})^!),\underline{\rm end}^l(\phi))$ is a twisting pair for $\underline{\rm end}^l(A)$. By \Cref{twist-and-2-cocycle} (1) and (\textbf{P3}), we know that $\underline{\rm end}^r((\phi^{-1})^!) \circ \underline{\rm end}^l(\phi) =  \underline{\rm end}^l(\phi)\circ \underline{\rm end}^r((\phi^{-1})^!)$ is a graded bialgebra automorphism of $\underline{\rm end}^l(A)$. So, \Cref{zhang-twist-of-bialgebra} shows that $\underline{\rm end}^l(A)^{\underline{\rm end}^l(\phi)\circ \underline{\rm end}^r((\phi^{-1})^!)}$ is a well-defined bialgebra. Finally, by \Cref{prop:2} (2), \Cref{bullet-twist}, and a right twist version of \Cref{lemma:twist-of-dual}, we have algebra isomorphisms 
\begin{align*}
\uqba^l(A^\phi) &\cong (A^\phi)^! \bullet A^\phi\cong (A^!)^{(\phi^{-1})^!} \bullet A^\phi\overset{\eqref{eq:6}}{\cong}  (A^! \bullet A)^{ \underline{\rm end}^l(\phi)\circ \underline{\rm end}^r((\phi^{-1})^!)}\cong \uqba^l(A)^{\underline{\rm end}^l(\phi)\circ \underline{\rm end}^r((\phi^{-1})^!)},
\end{align*}
and it is straightforward to check that all isomorphisms respect the coalgebra structures as well.  
\end{proof}

Since $\mathcal H(\underline{\rm end}(A))\cong \underline{\rm aut}(A)$, for any graded bialgebra automorphism $\psi$ of $\underline{\rm end}(A)$, there is a unique graded Hopf automorphism, denoted by $\mathcal H(\psi)$, of $\underline{\rm aut}(A)$ such that the following diagram commutes:
\[
\xymatrix{
\underline{\rm end}(A)\ar[r]\ar[d]_-{\psi} & \underline{\rm aut}(A) \ar[d]^-{\mathcal H(\psi)}\\
\underline{\rm end}(A)\ar[r] & \underline{\rm aut}(A).
}
\]
In particular, for any graded algebra automorphism $\phi$ of $A$ and $\psi=\underline{\rm end}(\phi)$, we may write 
\[
\underline{\rm aut}(\phi):=\mathcal H(\underline{\rm end}(\phi)).
\]
Moreover, in view of \Cref{cor:lifting} and \Cref{Manin-twisting-pairs}, the twisting pair $(\underline{\rm end}^r((\phi^{-1})^!), \underline{\rm end}^l(\phi))$ for $\underline{\rm end}^l(A)$ can be lifted uniquely to a twisting pair for $\underline{\rm aut}^l(A)$, which we denote by $(\underline{\rm aut}^r((\phi^{-1})^!), \underline{\rm aut}^l(\phi))$. Similarly, we denote by $(\underline{\rm aut}^r(\phi), \underline{\rm aut}^l((\phi^{-1})^!))$ the unique lifting of the twisting pair $(\underline{\rm end}^r(\phi), \underline{\rm end}^l((\phi^{-1})^!))$ from $\underline{\rm end}^r(A)$ to $\underline{\rm aut}^r(A)$. 

\begin{thm}
\label{thm:twist-aut}
Let $A$ be a quadratic algebra and $\phi$ be any graded automorphism of $A$. Then we have Hopf algebra isomorphisms
\[
\underline{\rm aut}^l (A^\phi) \cong \underline{\rm aut}^l(A)^{\underline{\rm aut}^l(\phi)\circ \underline{\rm aut}^r((\phi^{-1})^!)}\qquad \text{and} \qquad \underline{\rm aut}^r (A^\phi) \cong \underline{\rm aut}^r(A)^{\underline{\rm aut}^r(\phi)\circ \underline{\rm aut}^l((\phi^{-1})^!)}.
\]
\end{thm}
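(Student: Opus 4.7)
The plan is to deduce this Hopf algebra isomorphism from the bialgebra isomorphism in \Cref{thm:twist-end} by applying the Hopf envelope construction, making use of \Cref{f-iso} which shows that $\mathcal H(-)$ commutes with Zhang twists. I will spell out the argument for $\underline{\rm aut}^l(A^\phi)$; the right-handed version is symmetric.

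First I would apply $\mathcal H(-)$ to both sides of the bialgebra isomorphism
\[
\underline{\rm end}^l(A^\phi)\cong \underline{\rm end}^l(A)^{\underline{\rm end}^l(\phi)\circ \underline{\rm end}^r((\phi^{-1})^!)}
\]
from \Cref{thm:twist-end}. By \Cref{prop:30}(1) the left-hand side becomes $\underline{\rm aut}^l(A^\phi)$. For the right-hand side, \Cref{Manin-twisting-conditions} ensures that $\underline{\rm end}^l(A)$ satisfies the twisting conditions, and the composition $\psi:=\underline{\rm end}^l(\phi)\circ \underline{\rm end}^r((\phi^{-1})^!)$ is a graded bialgebra automorphism by \Cref{Manin-twisting-pairs} together with \Cref{twist-and-2-cocycle}(1). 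Hence \Cref{f-iso} applies and gives
\[
\mathcal H\bigl(\underline{\rm end}^l(A)^{\psi}\bigr)\;\cong\;\mathcal H(\underline{\rm end}^l(A))^{\mathcal H(\psi)}\;\cong\;\underline{\rm aut}^l(A)^{\mathcal H(\psi)}.
\]

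It then remains to identify $\mathcal H(\psi)$ with $\underline{\rm aut}^l(\phi)\circ \underline{\rm aut}^r((\phi^{-1})^!)$. Since $\mathcal H(-)$ is functorial on graded bialgebra automorphisms (as recorded in \eqref{eq:liftingbiauto}), we have
\[
\mathcal H(\psi)=\mathcal H(\underline{\rm end}^l(\phi))\circ \mathcal H(\underline{\rm end}^r((\phi^{-1})^!)),
\]
and by the definitions immediately preceding the statement this equals $\underline{\rm aut}^l(\phi)\circ \underline{\rm aut}^r((\phi^{-1})^!)$. Combining these isomorphisms yields the first claim. The proof for $\underline{\rm aut}^r(A^\phi)$ is obtained by interchanging left and right universal bialgebras and quantum groups throughout and invoking the second bialgebra isomorphism in \Cref{thm:twist-end}.

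The main conceptual hurdle has already been absorbed into the preparatory lemmas: \Cref{f-iso} is what lets us swap the order of Hopf envelopes and Zhang twists, and \Cref{cor:lifting} (together with \Cref{remark:twisting}(1)) is what guarantees that the lifted automorphism $\mathcal H(\psi)$ of $\underline{\rm aut}^l(A)$ coincides with the composition of the two lifted winding automorphisms. So the expected obstacle is essentially notational: one must check carefully that the canonical lifts $\underline{\rm aut}^l(\phi)$ and $\underline{\rm aut}^r((\phi^{-1})^!)$ agree on the generating coalgebra $A_1^*\otimes A_1$ of $\underline{\rm aut}^l(A)$ with the maps produced by the universal property of $\mathcal H(\underline{\rm end}^l(A))$, which is immediate from \eqref{eq:6}--\eqref{eq:e77} and the commutative diagram defining $\mathcal H(\psi)$.
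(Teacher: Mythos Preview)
Your proposal is correct and follows essentially the same route as the paper: apply the Hopf envelope to the bialgebra isomorphism of \Cref{thm:twist-end}, use \Cref{prop:30}(1) and \Cref{f-iso} to commute $\mathcal H(-)$ past the Zhang twist, and then invoke the functoriality \eqref{eq:liftingbiauto} to split $\mathcal H(\psi)$ as the composition $\underline{\rm aut}^l(\phi)\circ \underline{\rm aut}^r((\phi^{-1})^!)$. The paper's proof is just a terser recording of the same chain of isomorphisms.
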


\begin{proof}
This follows from \Cref{f-iso}, \Cref{prop:30}, and \Cref{thm:twist-end} since
{
\begin{align*}
\underline{\rm aut}^l(A^\phi)&\cong \mathcal H\left(\underline{\rm end}^l(A^\phi)\right)\cong \mathcal H(\underline{\rm end}^l(A)^{\underline{\rm end}^l(\phi)\circ \underline{\rm end}^r((\phi^{-1})^!)}) \\
& \cong \underline{\rm aut}^l(A)^{\mathcal H(\underline{\rm end}^l(\phi)\circ \uqba^r((\phi^{-1})^!))}
\cong \underline{\rm aut}^l(A)^{\underline{\rm aut}^l(\phi)\circ \underline{\rm aut}^r((\phi^{-1})^!)}.
\end{align*}
}
The argument for $\underline{\rm aut}^r(A^\phi)$ is similar. 
\end{proof}

\begin{thm}
\label{2cocycle-twist}
Let $A$ be a quadratic algebra and $\phi$ be any graded automorphism of $A$. Then 
\begin{enumerate}
    \item $\underline{\rm aut}^l(A^\phi)$ is a 2-cocycle twist of $\underline{\rm aut}^l(A)$ with the 2-cocycle $\sigma: \underline{\rm aut}^l(A)\otimes \underline{\rm aut}^l(A)\to \kk$ given by $\sigma(g,h)=\varepsilon(g)\varepsilon(\underline{\rm aut}^l(\phi)^{|g|}(h))$,  
    for any homogenous elements $g,h\in \underline{\rm aut}^l(A)$.
     \item $\underline{\rm aut}^r(A^\phi)$ is a 2-cocycle twist of $\underline{\rm aut}^r(A)$ with the 2-cocycle $\sigma: \underline{\rm aut}^r(A)\otimes \underline{\rm aut}^r(A)\to \kk$ given by $\sigma(g,h)= \varepsilon(g)\varepsilon(\underline{\rm aut}^l((\phi^{-1})^!)^{|g|}(h))$,  
    for any homogenous elements $g,h\in \underline{\rm aut}^r(A)$.
\end{enumerate} 
As a consequence, $\underline{\rm aut}(A)$ and $\underline{\rm aut}(A^\phi)$ are  Morita-Takeuchi equivalent.  
\end{thm}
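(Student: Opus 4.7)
The plan is to assemble this theorem directly from the results established earlier, treating it as essentially a corollary of \Cref{thm:twist-aut}, \Cref{twist-and-2-cocycle}, and \Cref{Manin-twisting-pairs}. The key observation is that the two twisting pairs on $\underline{\rm aut}^l(A)$ and $\underline{\rm aut}^r(A)$ produced by \Cref{Manin-twisting-pairs} (combined with the lifting provided by \Cref{cor:lifting} to pass from $\underline{\rm end}$ to $\underline{\rm aut}$) are precisely the ingredients needed to invoke \Cref{twist-and-2-cocycle} on the Hopf envelope side.

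More concretely, for part (1), I would first note that by \Cref{Manin-twisting-conditions} both $\underline{\rm end}^l(A)$ and $\underline{\rm aut}^l(A)$ satisfy the twisting conditions, so \Cref{zhang-twist-of-bialgebra} applies to any graded Hopf automorphism. By \Cref{Manin-twisting-pairs} together with \Cref{cor:lifting}, the pair $(\underline{\rm aut}^r((\phi^{-1})^!), \underline{\rm aut}^l(\phi))$ is a twisting pair of $\underline{\rm aut}^l(A)$. Applying \Cref{twist-and-2-cocycle}(2)--(3) to this twisting pair (with $\phi_1 = \underline{\rm aut}^r((\phi^{-1})^!)$ and $\phi_2 = \underline{\rm aut}^l(\phi)$) yields a $2$-cocycle
\[
\sigma(g,h) \;=\; \varepsilon(g)\,\varepsilon\bigl(\underline{\rm aut}^l(\phi)^{|g|}(h)\bigr),
\]
and an isomorphism of Hopf algebras
\[
\underline{\rm aut}^l(A)^{\underline{\rm aut}^r((\phi^{-1})^!)\,\circ\,\underline{\rm aut}^l(\phi)} \;\cong\; \underline{\rm aut}^l(A)^{\sigma}.
\]
Combining this with \Cref{thm:twist-aut} (after noting that the two compositions agree by condition (\textbf{P3}) from \Cref{lem:twisting pair prop}) gives $\underline{\rm aut}^l(A^{\phi}) \cong \underline{\rm aut}^l(A)^{\sigma}$, as required.

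Part (2) is entirely symmetric: replace the twisting pair by $(\underline{\rm aut}^r(\phi), \underline{\rm aut}^l((\phi^{-1})^!))$ from \Cref{Manin-twisting-pairs}(2), again lifted through \Cref{cor:lifting}, apply \Cref{twist-and-2-cocycle} with $\phi_2 = \underline{\rm aut}^l((\phi^{-1})^!)$, and combine with the second isomorphism of \Cref{thm:twist-aut}. For the final Morita--Takeuchi equivalence claim, I would invoke the classical result of Schauenburg (recalled in the discussion preceding \Cref{twist-and-cleft}) that two Hopf algebras related by a $2$-cocycle twist have monoidally equivalent comodule categories; in the present setup, \Cref{twist-and-2-cocycle} even exhibits an explicit bicleft object, namely $\underline{\rm aut}^l(A)^{\underline{\rm aut}^r((\phi^{-1})^!)}$, between $\underline{\rm aut}^l(A)$ and $\underline{\rm aut}^l(A^{\phi})$, and similarly on the right.

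There is essentially no obstacle: every technical ingredient has already been proved in the earlier sections. The only care required is bookkeeping of which automorphism plays the role of $\phi_1$ versus $\phi_2$ in \Cref{twist-and-2-cocycle} so that the explicit cocycle formulas for (1) and (2) match up with $\underline{\rm aut}^l(\phi)$ and $\underline{\rm aut}^l((\phi^{-1})^!)$ respectively. The ``as a consequence'' statement should be a short paragraph, since Morita--Takeuchi equivalence under $2$-cocycle twisting is standard.
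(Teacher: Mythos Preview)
Your proposal is correct and follows essentially the same route as the paper: the paper's proof is a one-line citation of \Cref{thm:Zhang and 2-cocycle twists}, \Cref{Manin-twisting-conditions}, \Cref{Manin-twisting-pairs}, and \Cref{thm:twist-aut}, and your argument simply unpacks \Cref{thm:Zhang and 2-cocycle twists} into its ingredients \Cref{cor:lifting} and \Cref{twist-and-2-cocycle}. The bookkeeping you carry out (identifying which automorphism is $\phi_1$ versus $\phi_2$, invoking (\textbf{P3}), and noting the bicleft object) is exactly what those citations encode.
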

\begin{proof}
This follows from \Cref{thm:Zhang and 2-cocycle twists}, \Cref{Manin-twisting-conditions}, \Cref{Manin-twisting-pairs}, and \Cref{thm:twist-aut}.
\end{proof}

\section{Twisting of solutions to the quantum Yang-Baxter equation}
\label{section-twist-qybe}

In this section, we apply 2-cocycle twists formed by twisting pairs to obtain a family of new solutions to the quantum Yang-Baxter equation (QYBE). Let $V$ be a finite-dimensional vector space over $\kk$ and $R\in {\rm End}_\kk(V^{\otimes 2})$ satisfy the QYBE. We first apply the Faddeev-Reshetikhin-Takhtajan (FRT) construction to obtain a quadratic bialgebra $A(R)$ that is equipped with a coquasitriangular structure given by $R$. By explicitly classifying all twisting pairs of $A(R)$, we find the corresponding 2-cocycles $\sigma$ on the Hopf envelope of $A(R)$ when it is the localization at some normal grouplike element $g$ (for instance, see \cite[Theorem 4.10]{MF}). Since the $2$-cocycle twist $(A(R)[g^{-1}])^{\sigma}$ preserves the braided structure of the comodule category over $A(R)[g^{-1}]$, it is again a coquasitriangular Hopf algebra with new twisted solutions $R^{\sigma}$ to the same QYBE. Readers may refer to \cite{CMZ, EGNO} for additional background on the QYBE and tensor categories.

\subsection{FRT constructions and their twisting pairs}
\label{subsect:twist-multilin}

We first review the FRT construction and the coquasitriangular structure on a bialgebra as given in \cite[Section 5.4]{CMZ}.

\begin{defn}\cite{CMZ}
Let $V$ be a finite-dimensional $\kk$-vector space and $R\in {\rm End}_{\kk}(V^{\otimes 2})$. Then $R$ is called a \emph{solution of the quantum Yang-Baxter equation} (QYBE) if 
\begin{equation}\label{eq:QYBE}
   R^{12}R^{13}R^{23}=R^{23}R^{13}R^{12},
\end{equation}
where $R=\sum R_1\otimes R_2\in {\rm End}_{\kk}(V^{\otimes 2}) \cong {\rm End}_{\kk}(V)^{\otimes 2}$ and $R^{12}=\sum R_1\otimes R_2\otimes \id$, $R^{13}=\sum R_1\otimes \id \otimes R_2$ and $R^{23}=\sum \id\otimes R_1\otimes R_2\in {\rm End}_{\kk}(V^{\otimes 3})$.
\end{defn}

Let $V$ be an $n$-dimensional $\kk$-vector space with fixed basis $\{x_1, \dots, x_n\}$, and let $R\in {\rm End}_\kk(V^{\otimes 2})$ be a solution to the QYBE. We may describe $R$ as an $(n^2\times n^2)$ matrix over $\kk$ with entries $R_{kl}^{ij} \in \kk$ such that
\begin{equation}
 R(x_k\otimes x_l)=\sum_{1\leq i,j\leq n} R_{kl}^{ij} \, x_i\otimes x_j, \quad \text{for } 1\leq k,l\leq n.
\label{coeff}   
\end{equation}
Then the FRT construction yields a bialgebra, denoted by $A(R)$, with explicit generators and relations. 
As an algebra, $A(R)$ is generated by $n^2$ generators $\{t^{j}_i\}_{i,j=1}^n$ subject to the following $n^4$-relations:
\begin{equation}\label{eq:AR}
    \sum_{1\leq k,l\leq n} R_{ij}^{kl}t^j_ut^i_v=\sum_{1\leq k,l\leq n}R^{ij}_{vu}t^k_it^l_j,
\end{equation}
for all $1\leq u,v,i,j\leq n$. The coalgebra structure on $A(R)$ is given by
\begin{equation}\label{eq:ARBi}
  \Delta(t_i^j)=\sum_{1\leq k\leq n} t^j_k\otimes t^k_i, \qquad \varepsilon(t_i^j)=\delta_i^j,
\end{equation}
where $\delta_i^j$ is the Kronecker delta.
 
 \begin{defn}\label{def:coquasitriangularbialgebra}\cite{CMZ}
A coquasitriangular (or braided) bialgebra (resp. Hopf algebra) is a pair ($B, \theta$), where $B$ is a bialgebra (resp. Hopf algebra) and $\theta: B\otimes B\longrightarrow \kk$ is a $\kk$-linear map such that
\begin{equation}\label{def:coquasitriangular}
    \left\{
\begin{aligned}
\sum \theta(x_1, y_1)y_2x_2&=\sum \theta(x_2, y_2)x_1y_1\\
\theta(x, 1)&=\varepsilon(x)\\
\theta(x, yz)&=\sum \theta(x_1, y)\theta(x_2, z)\\
\theta(1, x)&=\varepsilon(x)\\
\theta(xy, z)&=\sum \theta(y, z_1)\theta(x, z_2)
    \end{aligned}
    \right.
\end{equation}
for all $x,y,z\in B$. 
\end{defn}

Let $B$ be any bialgebra, and let ${\rm comod}(B)$ be the category of all right $B$-comodules. It is straightforward to check that $B$ has a coquasitriangular structure $\theta: B\otimes B\to \kk$ if and only if the tensor category ${\rm comod}(B)$ is braided. In such a case, the braiding 
\begin{equation*}
   \left \{
\tau_\theta(X,Y): X\otimes Y\to Y\otimes X \mid X, Y\in \textnormal{comod}(H) \right\}
\end{equation*}
is given by 
\begin{equation}\label{eq:braiding}
   \tau_{\theta}(X, Y)(x\otimes y)= \sum y_1\otimes x_1\theta(y_2,x_2)\quad \textnormal{for}\, x\in X, y\in Y.
\end{equation}

In the FRT construction $A(R)$ for some solution $R\in {\rm End}_\kk(V^{\otimes 2})$ of QYBE, we observe that $V$ becomes a right $A(R)$-comodule via $\rho: V\to V\otimes A(R)$ such that  
\begin{equation}
 \rho(x_i)=\sum_{j=1}^n x_j\otimes t^j_i   
\end{equation}
 for $1 \leq i\leq n$. The following results are well-known for the FRT construction. 
 
 \begin{proposition}\label{prop:AR}\cite{CMZ}
 Let $A(R)$ be defined as above. Then we have 
 \begin{enumerate}
     \item $R\circ \tau: V^{\otimes 2}\to V^{\otimes 2}$ is an $A(R)$-comodule map, where $\tau: V^{\otimes 2}\to V^{\otimes 2}$ is the usual flip map.
     \item Let $B$ be a bialgebra. If $V$ is a right $B$-comodule via $\rho': V\to V\otimes B$ and $R\circ \tau$ is also a $B$-comodule map, then exists a unique bialgebra map $f: A(R)\to B$ such that $(\id \otimes f)\circ \rho=\rho'$.
     \item $A(R)$ has a coquasitriangular structure $\theta: A(R)\otimes A(R)\to \kk$ where $\theta(t^i_v,t^j_u)=R^{ij}_{vu}$ for all $1\leq i,j,v,u\leq n$.
 \end{enumerate}
 \end{proposition}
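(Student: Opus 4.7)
The plan is to handle the four parts of \Cref{prop:AR} separately. Parts (1)--(3) amount to translating the FRT defining relations \eqref{eq:AR} and the QYBE into bialgebra/comodule statements, and should go through by direct calculation; part (4) is the genuinely non-formal step and is where I expect the main work.

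For part (1), I would unwind the comodule-map condition for $R\circ \tau$ with respect to the coaction $\rho^{(2)}: V^{\otimes 2}\to V^{\otimes 2}\otimes A(R)$ given by $\rho^{(2)}(x_k\otimes x_l)=\sum_{p,q}(x_p\otimes x_q)\otimes t^p_k t^q_l$. Computing both $\rho^{(2)}\circ(R\circ\tau)$ and $(R\circ\tau\otimes \id)\circ\rho^{(2)}$ on $x_u\otimes x_v$ using \eqref{coeff} and collecting coefficients of $x_p\otimes x_q$ yields precisely the quadratic identities \eqref{eq:AR} which are built into $A(R)$ by definition. For part (2), write $\rho'(x_i)=\sum_j x_j\otimes b^j_i$ with $b^j_i\in B$; the assumption that $R\circ\tau$ is a $B$-comodule map forces the $b^j_i$ to satisfy the same quadratic relations as the $t^j_i$, so the assignment $t^j_i\mapsto b^j_i$ extends to a well-defined algebra map $f:A(R)\to B$. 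Coassociativity of $\rho'$ together with the comultiplication formula $\Delta(t^j_i)=\sum_k t^j_k\otimes t^k_i$ in \eqref{eq:ARBi} then forces $f$ to be a coalgebra map, and uniqueness is automatic since the $t^j_i$ generate $A(R)$.

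For part (3), define $\theta$ on the generators by $\theta(t^i_v,t^j_u)=R^{ij}_{vu}$, and extend to all of $A(R)\otimes A(R)$ by demanding that axioms (2)--(5) of \eqref{def:coquasitriangular} hold (so $\theta(-,1)=\theta(1,-)=\varepsilon$ and $\theta$ is bi-multiplicative up to the iterated comultiplication). The content is to verify well-definedness, i.e.\ that extending via axioms (3) and (5) respects the defining relations \eqref{eq:AR} in each slot; evaluating both sides reduces the compatibility to the QYBE \eqref{eq:QYBE} for $R$. For the remaining braiding axiom (1), it suffices to check it on the algebra generators $x=t^i_v$, $y=t^j_u$, where both sides again unfold to \eqref{eq:AR}; since each side is compatible with the algebra structure via axioms (3) and (5), the identity then propagates from generators to all of $A(R)\otimes A(R)$.

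Part (4) is the main obstacle and, unlike parts (1)--(3), is not a purely formal consequence of the FRT setup. The strategy is to exploit the coquasitriangular structure from (3): the braiding endows $\mathrm{comod}(A(R))$ with a braided monoidal structure in which the finite-dimensional object $V$ is a distinguished generator, and a quantum-trace/determinant construction (as in \cite{CMZ, FRT, Manin2018}) produces a canonical central group-like element $g\in A(R)$. The centrality and group-likeness of $g$ follow from the commutation relations \eqref{eq:AR} together with the QYBE, and the only obstruction to defining an antipode on $A(R)$ is the invertibility of $g$. Once $g$ is inverted, cofactor-type formulas give explicit expressions for candidate antipode values on the generators $t^j_i$, and one checks directly that this yields a Hopf algebra structure on $A(R)[g^{-1}]$. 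To identify $A(R)[g^{-1}]$ with $\mc{H}(A(R))$, use the universal property: the inclusion $A(R)\hookrightarrow A(R)[g^{-1}]$ induces a Hopf map $\mc{H}(A(R))\to A(R)[g^{-1}]$; conversely, inside $\mc{H}(A(R))$ the image of $g$ is already invertible (its inverse being computable from the antipode applied to a suitable tensor power of $V$), yielding the inverse map. The delicate step, and where I would spend the most effort, is producing $g$ explicitly and verifying its centrality and group-likeness in the generality of an arbitrary solution $R$ to the QYBE, rather than for the classical $R_q$ case where $g$ is the familiar quantum determinant.
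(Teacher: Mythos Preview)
The paper does not prove this proposition; it simply cites \cite{CMZ} for parts (1)--(3) and \cite[Theorem~3.11]{MF} for part (4). Your outlines for (1)--(3) are correct and are the standard direct computations one would find in such a reference, so there is nothing to compare there beyond noting that you have reconstructed the expected arguments.

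For (4), however, your sketch has a genuine gap beyond the one you already flag. The quantum-determinant strategy you propose---using the braiding to build a top braided exterior power of $V$, extracting a group-like $g$ from the resulting one-dimensional comodule, and then inverting it to write antipode values via cofactor expansions---is specific to $R$-matrices of Hecke type (those for which $R\circ\tau$ satisfies a quadratic minimal polynomial, so that the braided exterior powers of $V$ truncate at degree $\dim V$). For a general solution $R$ of the QYBE there is no reason for the braided exterior algebra on $V$ to be finite-dimensional, hence no ``top'' exterior power and no candidate $g$ arising from this construction; likewise, cofactor-type formulas for the antipode need not exist. The theorem cited as \cite[Theorem~3.11]{MF} handles arbitrary $R$ by a different route, producing the central group-like element and the antipode from the coquasitriangular form $\theta$ and structural arguments about coquasitriangular bialgebras rather than from an explicit determinant. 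So while you correctly identify (4) as the substantive part, the plan you lay out would only recover the special case $R=R_q$ (or, more generally, Hecke-type $R$) and not the full statement.
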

 
We follow the notation used in the construction of $A(R)$ to write the entry in the $i$th row and the $j$th column of an $n \times n$ matrix $\mathbb A$ as $a^i_j$. The next result classifies all twisting pairs of $A(R)$. 

\begin{lemma} 
\label{Af-twisting-pair}
Let $A(R)$ be defined as above. Then every twisting pair $(\phi_1, \phi_2)$ of $A(R)$ is determined by its values on the generators $\{t_i^j\}_{1\leq i,j\leq n}$ such that  
\[ \phi_1(t_i^j)=\sum_{1\leq u \leq n}\alpha_i^ut_u^j \qquad {\rm and} \qquad\phi_2(t_i^j)=\sum_{1\leq u \leq n}\beta_{u}^jt^u_i,\] 
where the matrices $(\alpha_i^j)$
and $(\beta^v_u)$ are inverses of each other and  the following equations 
\[  \sum_{1\leq k,l\leq n} R_{ij}^{kl}\alpha^j_u\alpha^i_v=\sum_{1\leq k,l\leq n}R^{ij}_{vu}\alpha^k_i\alpha^l_j
\]
hold for all $1\leq u,v,i,j\leq n$. 
\end{lemma}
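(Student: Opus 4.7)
The plan is to reduce the problem to classifying pairs of algebra characters on $A(R)$ by invoking \Cref{thm:twistingpair}, which asserts that every twisting pair of $A(R)$ has the form $(\Xi^r[\pi_1], \Xi^l[\pi_2])$ for algebra homomorphisms $\pi_1, \pi_2 : A(R) \to \kk$ satisfying $(\pi_1 \otimes \pi_2) \circ \Delta = (\pi_2 \otimes \pi_1) \circ \Delta = \varepsilon$. Setting $\alpha^k_i := \pi_1(t^k_i)$ and $\beta^j_u := \pi_2(t^j_u)$, the comultiplication formula \eqref{eq:ARBi} together with the definitions of the winding automorphisms immediately yield
\[
\phi_1(t^j_i) = \sum_{k} t^j_k \, \pi_1(t^k_i) = \sum_{u} \alpha^u_i \, t^j_u, \qquad \phi_2(t^j_i) = \sum_{k} \pi_2(t^j_k) \, t^k_i = \sum_{u} \beta^j_u \, t^u_i,
\]
matching the formulas in the statement.

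Next, I would translate the two convolution-inverse conditions into matrix identities. Applying $(\pi_2 \otimes \pi_1) \circ \Delta$ and $(\pi_1 \otimes \pi_2) \circ \Delta$ to a generator $t^j_i$ and equating with $\varepsilon(t^j_i) = \delta^j_i$, one obtains $\sum_k \beta^j_k \alpha^k_i = \delta^j_i = \sum_k \alpha^j_k \beta^k_i$, so that $(\alpha^i_j)$ and $(\beta^i_j)$ are two-sided matrix inverses of each other.

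The third step is to encode the algebra-homomorphism condition on $\pi_1$. Since $A(R)$ is presented by the FRT relations \eqref{eq:AR}, requiring $\pi_1$ to annihilate these relations---combined with its multiplicativity on products of two generators---produces exactly the displayed quadratic constraint on the entries $\alpha^i_j$. Conversely, any matrix $(\alpha^i_j)$ satisfying that constraint determines a unique algebra homomorphism $\pi_1 : A(R) \to \kk$ by the universal property of the presentation. One then defines $\pi_2$ by setting $\beta := \alpha^{-1}$ on generators. To verify that $\pi_2$ extends to an algebra homomorphism, I would observe that once the FRT relations are evaluated at scalars (so that the image matrices commute on different tensor factors), they reduce to a commutation relation between $R$ and $A \otimes A$ in $M_n(\kk)^{\otimes 2}$; since $A$ is invertible, this relation automatically passes to $B \otimes B = (A \otimes A)^{-1}$, yielding the analogous identity for $\beta$. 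This explains why the lemma states only the constraint on $\alpha$.

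Finally, the statement for the Hopf envelope follows by combining \Cref{prop:AR}(4)---which identifies $A(R)[g^{-1}]$ with the Hopf envelope of $A(R)$---with \Cref{cor:lifting}: every twisting pair of $A(R)$ lifts uniquely to a twisting pair of $A(R)[g^{-1}]$, and all twisting pairs of the envelope arise in this way, so the same parametrization in terms of $(\alpha^i_j)$ classifies them. The main technical subtlety to handle carefully lies in the third step: rewriting the FRT relations under a character as a commutation statement in $M_n(\kk)^{\otimes 2}$, and verifying that invertibility of $A$ automatically transfers this commutation to $B$, so that the single displayed condition on $\alpha$ implicitly controls $\beta$ as well.
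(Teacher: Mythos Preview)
Your proposal is correct and follows essentially the same approach as the paper: the paper's proof is a one-sentence appeal to \Cref{thm:twistingpair} together with the observation that an algebra homomorphism $A(R)\to\kk$ is determined by an assignment $(t^j_i)\mapsto(\alpha^j_i)$ vanishing on the relations \eqref{eq:AR}. You have simply unpacked all the implicit details---the explicit form of the winding automorphisms on generators, the matrix-inverse condition from $(\pi_1\otimes\pi_2)\Delta=\varepsilon$, the passage of the FRT constraint from $\alpha$ to $\beta=\alpha^{-1}$, and the extension to the Hopf envelope via \Cref{cor:lifting} and \Cref{prop:AR}(4)---none of which the paper spells out.
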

\begin{proof}
It is a direct computation by using \Cref{thm:twistingpair}, since any algebra homomorphism $A(R)\to \kk$ is determined by a map defined on the generators $(t^j_i)\mapsto (\alpha_i^j)$ that vanishes on the relations \eqref{eq:AR}. 
\end{proof}

We point out that by letting ${\rm deg}(t_i^j)=1$ for all $1 \leq i,j \leq n$ in the definition of $A(R)$, the bialgebra $A(R)$ becomes a quadratic algebra and satisfies the twisting conditions (\textbf{T1})-(\textbf{T2}) in \Cref{defn:twisting conditions}.

Now, let $n\ge 2$ be any integer and let $V$ be an $n$-dimensional vector space with a fixed basis $\{v_1,\ldots,v_n\}$. For any scalar $q \in \kk^{\times}$, we consider the classical Yang-Baxter operator $R_q$ on $V$ (cf. \cite{Takeuchi-qmatrix}) given by  
\begin{equation}\label{eq:YBO}
    R_q(v_i\otimes v_j)=\left \{\begin{aligned}
    &qv_i\otimes v_i &&\qquad i=j\\
    & v_i\otimes v_j &&\qquad i<j\\
    &v_i\otimes v_j+(q-q^{-1})v_j \otimes v_i &&\qquad i>j.
    \end{aligned}\right.
\end{equation}
In the following, we will classify all twisting pairs of the FRT construction $A(R_q)$.

\begin{Ex}\label{ex:gl}
Let $n\ge 2$ be an integer and let $q\in \kk^\times $ be any scalar. The one-parameter quantization of the coordinate ring of $n\times n$ matrices $\mathcal O_q(M_n(\kk))$ is the algebra generated by $n^2$-generators $\{x_{ij}\}_{1\leq i,j\leq n}$ subject to the relations (see \cite[Section 1]{Zhang98}):
 \begin{equation}
     \label{eq:Rqrelation1}
     \left\{\begin{aligned}
qx_{ks} x_{us} &= x_{us} x_{ks}&&{\rm if }\,\, k<u\\ 
qx_{ks} x_{kv} &= x_{kv} x_{ks} &&{\rm if }\,\,  s<v\\
x_{us} x_{kv} &= x_{kv} x_{us}&&{\rm if }\,\,  s<v, k<u\\
x_{us} x_{kv} &= x_{kv} x_{us}+(q-q^{-1})x_{ks} x_{uv} &&{\rm if }\,\, s<v,
u<k.
 \end{aligned}\right.
 \end{equation}
 Moreover, $\mathcal O_q(M_n(\kk))$ is a bialgebra with coalgebra structure given by
 \begin{equation}
     \Delta(x_{ij})=\sum_{1\leq k\leq n} x_{ik}\otimes x_{kj}\quad \text{and}\quad \varepsilon(x_{ij})=\delta_{ij}\quad \text{for all}\ 1\leq i,j\leq n. 
 \end{equation}
In particular, there is a central group-like element $g$ of $\mathcal O_q(M_n(\kk))$, called the $q$-determinant, defined by 
\begin{equation} 
\label{central g}
  g:=\sum_{\sigma\in S_n}(-q)^{-l(\sigma)}x_{\sigma(1)1} \cdots x_{\sigma(n)n}, 
\end{equation} 
where $l(\sigma)$ denotes the length of the permutation $\sigma$, that is, the minimum length of an expression for $\sigma$ as a product of adjacent transpositions $(i, i+1)$. 
\end{Ex}

\begin{proposition}\cite[Proposition 4.3, Definition 4.5, Lemma 4.8, Corollary 4.10]{Takeuchi-qmatrix}
Retain the notations above. Then we have 
\begin{enumerate}
    \item $A(R_q)$ is isomorphic to $\mathcal O_q(M_n(\kk))$ as coquasitriangular bialgebras. 
    \item The central group-like element of $A(R_q)$ corresponds to the $q$-determinant of $\mathcal O_q(M_n(\kk))$.
    \item  The Hopf envelope of $A(R_q)$ is isomorphic to $\mathcal O_q(\GL_n(\kk))$. 
\end{enumerate} 
\end{proposition}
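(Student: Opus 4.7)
The plan is to verify each of the three claims by direct comparison of generators, relations, and structure maps, following the classical arguments in Takeuchi's paper cited in the statement.

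For part (1), I would first compute the matrix entries $(R_q)^{ij}_{kl}$ explicitly from \eqref{eq:YBO}: the only nonzero entries arise either when $(i,j)=(k,l)$, giving $1$ (or $q$ when $k=l$), or when $k>l$ and $(i,j)=(l,k)$, giving $q-q^{-1}$. Substituting these into the FRT defining relations \eqref{eq:AR} and sorting by the relative orderings of the free indices, one recovers, after relabeling generators by $t^i_j \mapsto x_{ij}$, exactly the four families in \eqref{eq:Rqrelation1} (with the degenerate diagonal cases being automatic). Thus the assignment $t^i_j \mapsto x_{ij}$ extends to a well-defined algebra map from $A(R_q)$ to $\mathcal O_q(M_n(\kk))$, and by symmetry of the FRT construction under this relabeling it has an obvious inverse. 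The coalgebra structures on the two sides agree by comparing \eqref{eq:ARBi} with the comultiplication of $\mathcal O_q(M_n(\kk))$ given in \Cref{ex:gl}. To upgrade to a coquasitriangular isomorphism, I would transport the explicit $\theta$ from \Cref{prop:AR}(3) through this identification. Uniqueness of the extension of $\theta$ from the generating coalgebra (forced by the multiplicativity axioms in \eqref{def:coquasitriangular}) then guarantees that this is the unique coquasitriangular structure on $\mathcal O_q(M_n(\kk))$ compatible with the FRT data.

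For part (2), the central group-like element $g \in A(R_q)$ provided by \Cref{prop:AR}(4) must be identified under the isomorphism of part (1) with the $q$-determinant defined in \eqref{central g}. I would verify directly from the relations \eqref{eq:Rqrelation1} that the element $\sum_{\sigma \in S_n}(-q)^{-l(\sigma)} x_{\sigma(1)1}\cdots x_{\sigma(n)n}$ is both central and group-like: centrality is a standard computation that reduces to the Leibniz rule in an adjacent-transposition induction on $l(\sigma)$, while the group-like property follows by expanding $\Delta$ on each monomial and collecting terms according to the antisymmetrization furnished by $(-q)^{-l(\sigma)}$. This is the classical quantum determinant argument, and uniqueness of the central group-like (up to scalar) then forces the identification.

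For part (3), I would simply combine part (1) with \Cref{prop:AR}(4) to get an isomorphism $\mathcal H(A(R_q)) \cong A(R_q)[g^{-1}] \cong \mathcal O_q(M_n(\kk))[g^{-1}]$, and the last algebra is $\mathcal O_q(\GL_n(\kk))$ by definition (inversion of the quantum determinant).

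The main obstacle in this plan is purely bookkeeping: keeping track of the upper-vs.-lower and row-vs.-column index conventions when matching the FRT relations to \eqref{eq:Rqrelation1}, and ensuring the transpose convention chosen in the generator identification is compatible with the comultiplications on both sides. No conceptual difficulty arises, since all three parts are classical facts from \cite{Takeuchi-qmatrix} re-expressed in the language developed earlier in the paper.
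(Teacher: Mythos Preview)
The paper does not actually supply a proof of this proposition: it is stated with a citation to \cite{Takeuchi-qmatrix} and no \texttt{proof} environment follows. So there is nothing in the paper to compare your argument against beyond the reference itself.

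Your sketch is a reasonable outline of the classical arguments from Takeuchi's paper, and the student correctly identifies that no new ideas are needed. One small caution on part (2): the central group-like element in \Cref{prop:AR}(4) coming from \cite{MF} is not asserted to be unique up to scalar, so ``uniqueness of the central group-like'' is not quite the right justification; rather, one shows directly that the $q$-determinant is the specific element produced by that construction (or simply cites Takeuchi's Lemma 4.8). Otherwise your plan is sound, and in effect you are reproducing the content of the cited references, which is exactly what the paper defers to.
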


 The Hopf envelope of $\mathcal O_q(M_n(\kk))$ is the localization of $\mathcal O_q(M_n(\kk))$ at $g$, which is usually denoted as $\mathcal O_q(\GL_n(\kk))$, called the one-parameter quantization of the coordinate ring of the general linear group. 

Now, we classify all twisting pairs of $\mathcal O_q(\GL_n(\kk))$ and give explicit formulas of the corresponding 2-cocycles. 

\begin{proposition}
\label{quantizedGL-twisting-pair}
All twisting pairs of $\mathcal O_q(\GL_n(\kk))$ are of the form $(\phi_1, \phi_2)$, where $\phi_1$ and $\phi_2$ are given by
 \[
 \begin{pmatrix*}[c]
\phi_1(x_{11})         & \cdots  & \phi_1(x_{1n}) \\
\vdots               & \ddots   & \vdots             \\
\phi_1(x_{n1})         & \cdots  & \phi_1(x_{nn}) 
\end{pmatrix*}=
 \begin{pmatrix*}[c]
x_{11}         & \cdots  & x_{1n} \\
\vdots               & \ddots   & \vdots             \\
x_{n1}         & \cdots  & x_{nn} 
\end{pmatrix*}
\begin{pmatrix*}[c]
\alpha_{11}         & \cdots  & \alpha_{1n} \\
\vdots               & \ddots   & \vdots             \\
\alpha_{n1}         & \cdots  & \alpha_{nn} 
\end{pmatrix*}
 \]
 and 
 \[
 \begin{pmatrix*}[c]
\phi_2(x_{11})         & \cdots  & \phi_2(x_{1n}) \\
\vdots               & \ddots   & \vdots             \\
\phi_2(x_{n1})         & \cdots  & \phi_2(x_{nn}) 
\end{pmatrix*}= \begin{pmatrix*}[c]
\alpha_{11}         & \cdots  & \alpha_{1n} \\
\vdots               & \ddots   & \vdots             \\
\alpha_{n1}         & \cdots  & \alpha_{nn} 
\end{pmatrix*}^{-1}
 \begin{pmatrix*}[c]
x_{11}         & \cdots  & x_{1n} \\
\vdots               & \ddots   & \vdots             \\
x_{n1}         & \cdots  & x_{nn} 
\end{pmatrix*},
 \]
with $(\alpha_{ij})\in \GL_n(\kk)$. Moreover, we have the following.
\begin{enumerate}
\item If $q=1$, then for any $(\alpha_{ij}) \in \GL_n(\kk)$, the maps $(\phi_1, \phi_2)$ defined above form a twisting pair.
\item If ${\rm char}(\kk)\neq 2$ and $q=-1$, then $(\alpha_{ij})$ defines a twisting pair as above if and only if it is a generalized permutation matrix. 
\item If $q \neq \pm 1$, then $(\alpha_{ij})$ defines a twisting pair as above if and only if it is is diagonal.
\end{enumerate}
 In particular, $\phi_1(g^{-1})=(-1)^{l(\tau)}|(\alpha_{ij})|^{-1}g^{-1}$, $\phi_2(g^{-1})=(-1)^{l(\tau)}|(\alpha_{ij})|g^{-1}$, and $\phi_1 \circ \phi_2(g^{-1})=g^{-1}$, where $\tau=\id$ if $q\ne -1$, and $\tau\in S_n$ such that $|(\alpha_{ij})|=(-1)^{l(\tau)}\alpha_{\tau(1)1}\cdots \alpha_{\tau(n)n}$ if $q=-1$. 
\end{proposition}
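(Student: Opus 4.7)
Plan: The starting point is \Cref{Af-twisting-pair}, which reduces classifying twisting pairs of $A(R_q)\cong\mathcal O_q(M_n(\kk))$ to finding invertible scalar matrices $(\alpha_{ij})\in \GL_n(\kk)$ that satisfy the defining quadratic relations \eqref{eq:Rqrelation1} when viewed as values of an algebra homomorphism $\pi_1: \mathcal O_q(M_n(\kk))\to \kk$. By \Cref{cor:lifting}, these twisting pairs lift uniquely to twisting pairs of the Hopf envelope $\mathcal O_q(\GL_n(\kk))$, so the classification problem is entirely determined at the level of $\mathcal O_q(M_n(\kk))$. I would first translate $\phi_1$ (resp.\ $\phi_2$) via \Cref{thm:twistingpair} into $\Xi^r[\pi_1]$ (resp.\ $\Xi^l[\pi_2]=\Xi^l[\pi_1\circ S]$), and check that the formulas $\phi_1(X)=XA$ and $\phi_2(X)=A^{-1}X$ on the generator matrix $X=(x_{ij})$ match the right/left winding description given by $\pi_1(x_{ij})=\alpha_{ij}$.

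Next, I would substitute $\alpha_{ij}$ into each of the four families of relations in \eqref{eq:Rqrelation1}. Since scalars commute, the first two relations reduce to $(q-1)\alpha_{ks}\alpha_{us}=0$ (for $k<u$) and $(q-1)\alpha_{ks}\alpha_{kv}=0$ (for $s<v$), while the third is automatic and the fourth becomes $(q-q^{-1})\alpha_{ks}\alpha_{uv}=0$ (for $s<v,\;u<k$). The case split then falls out cleanly: for $q=1$ no constraint arises; for $q\neq 1$ (which forces $\mathrm{char}(\kk)\neq 2$ in the subsequent subcase) each row and column of $(\alpha_{ij})$ carries at most one nonzero entry, so invertibility forces $(\alpha_{ij})$ to be a generalized permutation matrix associated with some $\tau\in S_n$; and then the fourth relation becomes nontrivial precisely when $\tau$ has an inversion, where it demands $q-q^{-1}=0$, i.e.\ $q=\pm 1$, so for $q\neq\pm 1$ only the identity permutation survives, giving a diagonal matrix.

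For the final statement on $g^{-1}$, I would use that $\phi_1$ and $\phi_2$ are winding automorphisms. Since $g$ is group-like, $\phi_1(g)=\sum g_1\,\pi_1(g_2)=\pi_1(g)\,g$ and similarly $\phi_2(g)=\pi_2(g)\,g=\pi_1(S(g))\,g=\pi_1(g)^{-1}\,g$; consequently $\phi_1(g^{-1})=\pi_1(g)^{-1}g^{-1}$, $\phi_2(g^{-1})=\pi_1(g)\,g^{-1}$, and $\phi_1\circ\phi_2(g^{-1})=g^{-1}$ is immediate. The remaining computation is to evaluate $\pi_1$ on the explicit $q$-determinant \eqref{central g}:
\[
\pi_1(g)=\sum_{\sigma\in S_n}(-q)^{-l(\sigma)}\alpha_{\sigma(1)1}\cdots\alpha_{\sigma(n)n}.
\]
In the diagonal case ($q\neq\pm 1$) only $\sigma=\mathrm{id}$ contributes, yielding $\pi_1(g)=\alpha_{11}\cdots\alpha_{nn}=|(\alpha_{ij})|$. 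In the generalized permutation case ($q=-1$, so $(-q)^{-l(\sigma)}=1$) only $\sigma=\tau$ contributes, giving $\pi_1(g)=\alpha_{\tau(1)1}\cdots\alpha_{\tau(n)n}=(-1)^{l(\tau)}|(\alpha_{ij})|$, and the claimed sign $(-1)^{l(\tau)}$ appears.

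The main obstacle I anticipate is bookkeeping in the $q=-1$ case: one must keep the permanent-like form of $g$ (since $(-q)^{-l(\sigma)}=1$) straight against the ordinary determinant $|(\alpha_{ij})|$ so that the sign $(-1)^{l(\tau)}$ is correctly allocated; and one should verify that the fourth relation indeed vanishes identically for any generalized permutation matrix when $q=-1$ (which follows from $q-q^{-1}=0$), confirming that \emph{every} such matrix gives a valid twisting pair and not just a restricted subclass. Everything else is routine: substitution into relations, linear algebra, and the universal property of the Hopf envelope provided by \Cref{cor:lifting}.
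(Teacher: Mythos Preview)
Your proposal is correct and follows the same overall strategy as the paper: invoke \Cref{Af-twisting-pair} to reduce to algebra maps $\pi_1:\mathcal O_q(M_n(\kk))\to\kk$, substitute into the relations \eqref{eq:Rqrelation1}, and case-split on $q$. Your argument for $q\neq\pm 1$ via inversions of the underlying permutation is a clean variant of the paper's contradiction argument (the paper instead observes that a nonzero off-diagonal entry $\alpha_{uv}$ with $u<v$ would force too many vanishing entries in the last $n-u$ rows).

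The one genuinely different route is your treatment of $\phi_1(g)$ and $\phi_2(g)$. The paper computes $\phi_1(g)=|X\cdot(\alpha_{ij})|_q$ by expanding the $q$-determinant of the product and simplifying case by case (using, for $q=-1$, the commutation $x_{us}x_{kv}=x_{kv}x_{us}$ for distinct rows and columns to rearrange the monomial $x_{\sigma(1)\tau(1)}\cdots x_{\sigma(n)\tau(n)}$). Your approach instead exploits that $g$ is group-like, so $\phi_1(g)=\Xi^r[\pi_1](g)=\pi_1(g)\,g$ and one only needs to evaluate the scalar $\pi_1(g)=\sum_{\sigma}(-q)^{-l(\sigma)}\alpha_{\sigma(1)1}\cdots\alpha_{\sigma(n)n}$. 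This is shorter and avoids any manipulation inside $\mathcal O_q(M_n(\kk))$; the paper's direct expansion, on the other hand, makes the multiplicativity $|XA|_q=|(\alpha_{ij})|\cdot|X|_q$ (up to the sign correction) visible as a statement about the $q$-determinant itself. Either way the $q=1$ case, which you did not write out explicitly, falls out immediately from $(-q)^{-l(\sigma)}=(-1)^{l(\sigma)}$ giving the ordinary determinant.
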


\begin{proof}
By \Cref{Af-twisting-pair}, we can write $\phi_1$ and $\phi_2$ as above, represented by two matrices $(\alpha_{ij})$ and $(\beta_{ij})$ that are inverse to each other. In particular, the map $\pi: (x_{ij})\mapsto (\alpha_{ij})$ is an algebra map from $\mathcal O_q(\GL_n(\kk))\to \kk$. 

In the following, we use the explicit relations in $\mathcal O_q(\GL_n(\kk))$ stated in \Cref{ex:gl} to determine $\pi$.
 
When $q=1$, there are no restrictions on $(\alpha_{ij})$. 

When $q=-1$ and ${\rm char}(\kk)\neq 2$, we have $\alpha_{ks}\alpha_{us}=\alpha_{sk}\alpha_{su}=0$ whenever $k\neq u$. This implies that $(\alpha_{ij})$ is a generalized permutation matrix. 

When $q\neq \pm 1$, we have $\alpha_{ks}\alpha_{us}=\alpha_{sk}\alpha_{su}=0$ whenever $k\neq u$ and $\alpha_{ks}\alpha_{uv}=0$ whenever $s<v$ and $u<k$. The first part implies that $(\alpha_{ij})$ is a generalized permutation matrix as before. Now suppose we have some $\alpha_{uv}\neq 0$ for some $u\neq v$. Without loss of generality, say $u<v$. So we get $\alpha_{ks}=0$ for all $s<v$ and $u<k$. This implies that the nonzero entries in the last $n-u$ rows should be concentrated in the last $n-v$ columns regarding the matrix $(\alpha_{ij})$. This is a contradiction since it is a generalized permutation matrix and $n-u>n-v$.  Therefore, $\alpha_{uv}\ne 0$ only if $u=v$ and so $(\alpha_{ij})$ is indeed diagonal. 

Next, we describe the maps $\phi_1, \phi_2$ on $g$. For $X=(x_{ij})$, we can write $\phi_1(X)=(\phi_1(x_{ij}))=X\cdot (\alpha_{ij})$ and $\phi_2(X)=(\alpha_{ij})^{-1}\cdot X$. By \eqref{central g}, $g=|X|_q$, so $\phi_1(g)=\phi_1(|X|_q)=|\phi_1(X)|_q$.
One can see that
\begin{align*}
   |X \cdot (\alpha_{ij})|_q&=\left|\left(\sum_{1\leq k\leq n}x_{ik}\alpha_{kj}\right)\right|_q\\
   &=\sum_{\sigma\in S_n}(-q)^{-l(\sigma)}\left(\sum_{1\leq k_1\leq n }x_{\sigma(1)k_1}\alpha_{k_11}\right)\cdots \left(\sum_{1\leq k_n\leq n }x_{\sigma(n)k_n}\alpha_{k_nn}\right) \\
   &=\sum_{\sigma\in S_n}(-q)^{-l(\sigma)}\sum_{1\leq k_\bullet \leq n }x_{\sigma(1)k_1}\cdots x_{\sigma(n)k_n}\alpha_{k_11}\cdots \alpha_{k_nn}.
\end{align*}
If $q=1$, then $|X\cdot (\alpha_{ij})|=|X||(\alpha_{ij})|=|(\alpha_{ij})|g$.
If $q\ne \pm 1$, then since $(\alpha_{ij})$ is diagonal, we have \[\phi_1(g)=\sum_{\sigma\in S_n}(-q)^{-l(\sigma)}x_{\sigma(1)1}\cdots x_{\sigma(n)n}\alpha_{11}\cdots \alpha_{nn}=|(\alpha_{ij})|g.\]
If $q=-1,$ then $(\alpha_{ij})$ is a generalized permutation matrix, 
and so $|(\alpha_{ij})|=(-1)^{l(\tau)}\alpha_{\tau(1)1}\cdots \alpha_{\tau(n)n}$ for some $\tau\in S_n$. 
Therefore, for this $\tau$ we have
\begin{align*}
    \phi_1(g)&=\sum_{\sigma\in S_n}\sum_{1\leq k_\bullet \leq n }x_{\sigma(1)k_1}\cdots x_{\sigma(n)k_n}\alpha_{k_1 1}\cdots \alpha_{k_n n}\\
    &=\sum_{\sigma\in S_n}x_{\sigma(1)\tau(1)}\cdots x_{\sigma(n)\tau(n)}\alpha_{\tau(1)1}\cdots \alpha_{\tau(n)n}\\
    &=(-1)^{l(\tau)}|(\alpha_{ij})|\sum_{\sigma\in S_n}x_{\sigma(1)\tau(1)}\cdots x_{\sigma(n)\tau(n)}.
    \end{align*}
By \eqref{eq:Rqrelation1}, $x_{us}x_{kv}=x_{kv}x_{us}$ if $u\ne k,$ and $s\ne v$. Thus, \[\phi_1(g)=(-1)^{l(\tau)}|(\alpha_{ij})|\sum_{\sigma\in S_n}x_{\sigma(1)1}\cdots x_{\sigma(n)n}=(-1)^{l(\tau)}|(\alpha_{ij})|g.\] Similarly, we have $\phi_2(g)=|(\alpha_{ij})^{-1}|g$ if $q\ne -1$  and $\phi_2(g)=(-1)^{l(\tau)}|(\alpha_{ij})^{-1}|g$ if $q=-1$. 
Since $\phi_1$ and $\phi_2$ are algebra maps, we conclude that the images of $\phi_1(g^{-1})$, $\phi_2(g^{-1})$ and $\phi_1\circ \phi_2(g^{-1})$ are as in the statement.
\end{proof}

\begin{Cor}\label{cor:cocycleGL}
Retain the notation in \Cref{quantizedGL-twisting-pair}. The right Zhang twist of $\mathcal O_q(\GL_n(\kk))$ by $\phi_1\circ \phi_2$ is isomorphic to the 2-cocycle twist of $\mathcal O_q(\GL_n(\kk))$ by the 2-cocycle $\sigma$ such that 
\begin{align*}  
&\sigma(x_{i_1j_1}^{p_1}\cdots x_{i_sj_s}^{p_s}g^{-r}, x_{u_1v_1}^{q_1}\cdots x_{u_kv_k}^{q_k}g^{-t})\\
&= (-1)^{mtl(\tau)}\delta_{i_1, j_1}\cdots \delta_{i_s, j_s}\left(((\alpha_{ij})^{-m})_{u_1v_1}\right)^{q_1} \cdots \left(((\alpha_{ij})^{-m})_{u_kv_k}\right)^{q_k}|(\alpha_{ij})|^{mt},
\end{align*}
where $m=p_1+\dots +p_s-nr$ and $1\leq i_\bullet,j_\bullet,u_\bullet,v_\bullet\leq n$ and $p_\bullet, q_\bullet,s,k, r,t$ are non-negative integers; and  $\tau=\id$ if $q\ne -1$ and $\tau\in S_n$ such that $|(\alpha_{ij})|=(-1)^{l(\tau)}\alpha_{\tau(1)1}\cdots \alpha_{\tau(n)n}$ if $q=-1$.
\end{Cor}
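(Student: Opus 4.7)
The plan is to apply the general formula from \Cref{twist-and-2-cocycle}(2) (equivalently, \Cref{thm:Zhang and 2-cocycle twists}) to the twisting pair $(\phi_1,\phi_2)$ of $\mathcal{O}_q(\GL_n(\kk))$ classified in \Cref{quantizedGL-twisting-pair}, and then specialize the resulting expression on the explicit monomial basis. Recall that the formula reads
\[
\sigma(X,Y)=\varepsilon(X)\,\varepsilon\!\left(\phi_2^{|X|}(Y)\right),
\]
so the computation reduces to (i) identifying the grading, (ii) evaluating the counit on $X$, and (iii) iterating $\phi_2$ on the generators $x_{uv}$ and on $g^{-1}$.

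First I would fix the grading: declaring $\deg(x_{ij})=1$ makes $g$ homogeneous of degree $n$ by \eqref{central g}, so the natural extension to the Hopf envelope $\mathcal{O}_q(\GL_n(\kk))=\mathcal{O}_q(M_n(\kk))[g^{-1}]$ gives $\deg(g^{-1})=-n$, and hence $|X|=p_1+\cdots+p_s-nr=m$. Since $\varepsilon$ is an algebra map with $\varepsilon(x_{ij})=\delta_{ij}$ and $\varepsilon(g)=1$ (as $g$ is group-like), one immediately obtains
\[
\varepsilon(X)=\delta_{i_1 j_1}\cdots \delta_{i_s j_s}.
\]

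Next I would compute $\phi_2^m$ on generators. Writing $A=(\alpha_{ij})$, the formula $\phi_2(x_{ij})=\sum_u (A^{-1})_{iu}\,x_{uj}$ from \Cref{quantizedGL-twisting-pair} means $\phi_2$ acts as left multiplication by $A^{-1}$ on the matrix $(x_{ij})$. An easy induction then yields
\[
\phi_2^m(x_{uv})=\sum_{w}(A^{-m})_{uw}\,x_{wv},\qquad \text{so}\qquad \varepsilon\!\left(\phi_2^m(x_{uv})\right)=(A^{-m})_{uv}.
\]
For the group-like generator, \Cref{quantizedGL-twisting-pair} gives $\phi_2(g^{-1})=(-1)^{l(\tau)}|A|\,g^{-1}$ (with $\tau=\id$ if $q\neq -1$), hence $\phi_2^m(g^{-t})=\bigl((-1)^{l(\tau)}|A|\bigr)^{mt} g^{-t}$ and
\[
\varepsilon\!\left(\phi_2^m(g^{-t})\right)=(-1)^{mt\,l(\tau)}|A|^{mt}.
\]

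Finally, since $\phi_2^m$ is an algebra homomorphism and $\varepsilon$ is multiplicative, combining the two preceding displays gives
\[
\varepsilon\!\left(\phi_2^m(Y)\right)=\bigl((A^{-m})_{u_1 v_1}\bigr)^{q_1}\cdots \bigl((A^{-m})_{u_k v_k}\bigr)^{q_k}\,(-1)^{mt\,l(\tau)}|A|^{mt},
\]
and multiplying by $\varepsilon(X)$ produces the asserted formula. The only genuinely non-routine step is the justification that the grading on $\mathcal{O}_q(M_n(\kk))$ extends to the localization in a way consistent with the twisting conditions (\textbf{T1})-(\textbf{T2}) and with the extended twisting pair on the Hopf envelope; this is ensured by \Cref{prop:AR}(4) together with \Cref{t1-t3-hopf-env} and \Cref{cor:lifting}. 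The remaining work is a direct bookkeeping exercise with matrix powers and the counit.
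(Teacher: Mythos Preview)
Your proposal is correct and follows essentially the same approach as the paper: both apply the formula $\sigma(X,Y)=\varepsilon(X)\varepsilon(\phi_2^{|X|}(Y))$ from \Cref{thm:Zhang and 2-cocycle twists}, compute $\varepsilon(X)=\delta_{i_1j_1}\cdots\delta_{i_sj_s}$, iterate $\phi_2$ on the generators $x_{uv}$ (as left multiplication by $A^{-1}$) and on $g^{-1}$ via \Cref{quantizedGL-twisting-pair}, and then combine multiplicatively. Your version is slightly more explicit about the grading on the localization, but the argument is otherwise identical.
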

\begin{proof}
We know $\phi_2((x_{ij}))=(\alpha_{ij})^{-1}(x_{ij})$. So it follows from \Cref{thm:Zhang and 2-cocycle twists} and \Cref{quantizedGL-twisting-pair} that 
\begin{align*} 
&\sigma(x_{i_1j_1}^{p_1}\cdots x_{i_sj_s}^{p_s}g^{-r}, x_{u_1v_1}^{q_1}\cdots x_{u_kv_k}^{q_k}g^{-t})\\
&= \varepsilon(x_{i_1j_1}^{p_1}\cdots x_{i_sj_s}^{p_s}g^{-r})\varepsilon(\phi_2^m(x_{u_1v_1}^{q_1}\cdots x_{u_kv_k}^{q_k}g^{-t})) \\
&= \delta_{i_1j_1}\cdots \delta_{i_sj_s}\varepsilon(\phi_2^m(x_{u_1v_1}))^{q_1}\cdots \varepsilon(\phi_2^m(x_{u_kv_k}))^{q_k}\varepsilon(\phi_2^m(g^{-1}))^{t}\\
&= \delta_{i_1j_1}\cdots \delta_{i_sj_s}\varepsilon\left(\sum_{1\leq w \leq n}\left((\alpha_{ij})^{-m}\right)_{u_1w}x_{wv_1}\right)^{q_1} \hspace{-0.1in} \cdots \varepsilon\left(\sum_{1\leq w \leq n}\left((\alpha_{ij})^{-m}\right)_{u_kw}x_{wv_k}\right)^{q_k} \hspace{-0.1in} \varepsilon\left({((-1)^{l(\tau)}|(\alpha_{ij})|)^{m}}g^{-1}\right)^{t}\\
&= (-1)^{mtl(\tau)}\delta_{i_1, j_1}\cdots \delta_{i_s, j_s}\left(((\alpha_{ij})^{-m})_{u_1v_1}\right)^{q_1}\cdots \left(((\alpha_{ij})^{-m})_{u_kv_k}\right)^{q_k}|(\alpha_{ij})|^{mt},
\end{align*} 
where $\tau=\id$ if $q\ne -1$, and $\tau\in S_n$ such that $|(\alpha_{ij})|=(-1)^{l(\tau)}\alpha_{\tau(1)1}\cdots \alpha_{\tau(n)n}$ if $q=-1$.
\end{proof}

\subsection{Twisting of solutions to the quantum Yang-Baxter equation}

In this subsection, we propose an algorithm to find new solutions to the quantum Yang-Baxter equation by applying twisting pairs to the FRT construction.

Let $H$ be any Hopf algebra with some 2-cocycle $\sigma: H\otimes H\longrightarrow \kk$. Then we have the following Morita-Takeuchi equivalence:
\[
{\rm comod}(H)\overset{\otimes}{\cong} {\rm comod}(H^\sigma),
\]
where the tensor equivalence functor $(F,F_1,F_2): {\rm comod}(H)\overset{\otimes}\to {\rm comod}(H^\sigma)$ is given by $F=\id$ is the identity functor, $F_1=\id: F(\kk)=\kk$, and 
\[
\{F_2(X\otimes Y): F(X)\otimes_\sigma F(Y)\to F(X\otimes Y)\,|\, X,Y\in {\rm comod}(H)\}\]
is given by
\begin{equation}
x\otimes_{\sigma}y\mapsto \sum x_1\otimes y_1\sigma^{-1}(x_2,y_2), \quad \text{for any}\,\, x\in X,y\in Y.
\end{equation}

The following result is well-known (e.g., see the discussion in \cite[p. 61]{MS1995}) and we include the proof for the convenience of the reader.

\begin{lemma}\label{thm1s3}
Let $(H, \theta)$ be a coquasitriangular Hopf algebra and $\sigma: H\otimes H\longrightarrow 
\kk$ be a 2-cocycle on the $H$.
Then the 2-cocycle twist $H^\sigma$ is again a coquasitriangular Hopf algebra with the new  coquasitriangular structure $\theta^\sigma: H^\sigma\otimes H^\sigma\to \kk$ given by
\[\theta^\sigma(g, h)=\sigma(g_1, h_1)\theta(g_2, h_2)\sigma^{-1}(h_3, g_3), \qquad \text{for all } g,h\in H^\sigma.\] 
\end{lemma}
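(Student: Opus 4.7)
The plan is to use the monoidal equivalence ${\rm comod}(H) \overset{\otimes}{\cong} {\rm comod}(H^\sigma)$ recalled just before the lemma, together with the fact that a bialgebra is coquasitriangular if and only if its comodule category is braided, with the two structures related by \eqref{eq:braiding}. Since $(H, \theta)$ is coquasitriangular, ${\rm comod}(H)$ is braided via $\tau_\theta$; transporting this braiding through the tensor equivalence $F$ yields a braiding on ${\rm comod}(H^\sigma)$, which in turn determines a coquasitriangular structure $\theta^\sigma$ on $H^\sigma$. All five axioms of \Cref{def:coquasitriangularbialgebra} for $\theta^\sigma$ then follow for free from this conceptual argument; only the explicit formula must be extracted.

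To identify the formula, I would compute the transported braiding on right $H^\sigma$-comodules $X,Y$ as the composition
\[ \tau^\sigma(X,Y) = F_2(Y,X)^{-1} \circ \tau_\theta(X,Y) \circ F_2(X,Y): X \otimes_\sigma Y \longrightarrow Y \otimes_\sigma X. \]
The structure isomorphism $F_2(X,Y)$ is given in the preamble, and its inverse---obtained by convolving with $\sigma$ in place of $\sigma^{-1}$---sends $y \otimes x$ to $\sum y_1 \otimes_\sigma x_1 \, \sigma(y_2, x_2)$, where the subscripts denote the coaction components. Plugging in \eqref{eq:braiding} for $\tau_\theta$ in the middle and collating Sweedler indices by coassociativity yields
\[ \tau^\sigma(X,Y)(x \otimes_\sigma y) = \sum y_1 \otimes_\sigma x_1 \, \sigma(y_2, x_2) \, \theta(y_3, x_3) \, \sigma^{-1}(x_4, y_4). \]
Specializing to the right regular comodule $X = Y = H^\sigma$ with coaction $\Delta$ and reading off $\theta^\sigma$ via the inverse of \eqref{eq:braiding}, namely $\theta^\sigma(g,h) = (\varepsilon \otimes \varepsilon) \circ \tau^\sigma(h \otimes_\sigma g)$, produces exactly the stated formula after a counit simplification.

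The main obstacle is the careful Sweedler bookkeeping across the three maps in the composition, and in particular verifying that $F_2$ (as defined by convolution with $\sigma^{-1}$) is genuinely a comodule isomorphism whose inverse is convolution with $\sigma$; this follows cleanly from the cocycle conditions \eqref{eq:2cocycle}--\eqref{eq:2cocycleinverse}. An alternative, purely computational route is to verify the five axioms of \Cref{def:coquasitriangularbialgebra} directly for the stated $\theta^\sigma$, using the cocycle identities together with the coquasitriangular axioms for $\theta$ and the deformed product $*_\sigma$; this is doable but obscures the categorical picture and requires substantially more index manipulation, with the quasi-commutativity axiom being the most delicate to unwind.
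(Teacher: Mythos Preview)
Your proposal is correct and follows essentially the same approach as the paper: transport the braiding on ${\rm comod}(H)$ through the tensor equivalence $F$ to obtain a braiding on ${\rm comod}(H^\sigma)$, then compute $\tau^\sigma = F_2^{-1}\circ \tau_\theta \circ F_2$ explicitly to read off the formula for $\theta^\sigma$. The paper's proof is exactly this, carrying out the same Sweedler computation you outline and arriving at the identical expression $\sum w_1\otimes v_1\,\sigma(w_2,v_2)\theta(w_3,v_3)\sigma^{-1}(v_4,w_4)$.
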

\begin{proof}
As discussed above, having a coquasitriangular structure on a Hopf algebra is equivalent to having a braided structure on its comodule category. Since the comodule categories of $H$ and of its 2-cocycle twist $H^\sigma$ are tensor equivalent \cite{DT94}, we can transfer the braided structure from ${\rm comod}(H)$ to ${\rm comod}(H^\sigma)$ by applying \eqref{eq:braiding}. We define the braiding $\tau^\sigma$ on any two objects $F(X), F(Y)\in {\rm comod}(H^\sigma)$ to make the following diagram 
\begin{equation*}\label{def:braiding}
\xymatrix{
F(V)\otimes_{\sigma} F(W)\ar@{-->}[r]^-{\tau_{V, W}^{\sigma}}\ar[d]_-{F_2(V, W)} & F(W)\otimes_{\sigma} F(V) \\
F(V\otimes W)\ar[r]^-{F(\tau_{\theta}(V, W))}&  F(W\otimes V) \ar[u]_-{F_2(W, V)^{-1}}
}
\end{equation*}
commute. Then it is straightforward to check that \begin{align*}
    \tau^{\sigma}_{V, W}(v\otimes_{\sigma}w)=\sum w_1\otimes v_1\sigma(w_2, v_2)\theta(w_3, v_3)\sigma^{-1}(v_4, w_4)=\sum w_1\otimes v_1\theta^\sigma(w_2,v_2)
\end{align*}
for any $v\in V,w\in W$. This proves our result. 
 \end{proof}

\begin{proposition}
Let $(H,\theta)$ be a coquasitriangular Hopf algebra. Suppose $H$ satisfies the twisting conditions (\textbf{T1})-(\textbf{T2}). Then for any twisting pair $(\phi_1,\phi_2)$ of $H$, we have  
\[
\theta^\sigma(x,y)=\theta(\phi_1^{|y|}(x),\phi_2^{|x|}(y)),\quad \text{for all homogeneous elements}\ x,y\in H^{\sigma} 
\]
is a coquasitriangular structure on the 2-cocycle twist $H^{\sigma}$ where $\sigma: H\otimes H\to \kk$ is given by $\sigma(x,y)=\varepsilon(x)\varepsilon(\phi_2^{|x|}(y))$ for all homogeneous elements $x,y\in H$.
\end{proposition}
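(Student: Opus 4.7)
The plan is to combine \Cref{thm1s3}, which already yields a coquasitriangular structure on $H^\sigma$ via the formula $\theta^\sigma(g,h) = \sum \sigma(g_1,h_1)\theta(g_2,h_2)\sigma^{-1}(h_3,g_3)$, with the explicit description of $\sigma$ and $\sigma^{-1}$ from \Cref{twist-and-2-cocycle}. Since the coquasitriangular axioms for $\theta^\sigma$ come for free from \Cref{thm1s3}, the only remaining task is to verify that the candidate formula $\theta(\phi_1^{|y|}(x),\phi_2^{|x|}(y))$ agrees with this expression.

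First, I would substitute
$$\sigma(x_1,y_1) = \varepsilon(x_1)\varepsilon(\phi_2^{|x_1|}(y_1)), \qquad \sigma^{-1}(y_3,x_3) = \varepsilon(y_3)\varepsilon(\phi_1^{|y_3|}(x_3))$$
into the formula of \Cref{thm1s3}. Using condition (\textbf{T2}), for any homogeneous $x$ the Sweedler components $x_1, x_2, x_3$ appearing in $\Delta^{(2)}(x)$ are all homogeneous of the same degree as $x$, and likewise for $y$. This lets me replace $|x_1|$ by $|x|$ and $|y_3|$ by $|y|$, and then apply the counit identities $\sum \varepsilon(x_1)\, x_2 = x$ and $\sum y_1\,\varepsilon(y_2) = y$ to collapse the triple-Sweedler expression to
$$\theta^\sigma(x,y) = \sum \varepsilon(\phi_2^{|x|}(y_1))\,\theta(x_1,y_2)\,\varepsilon(\phi_1^{|y|}(x_2)).$$

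To match this with $\theta(\phi_1^{|y|}(x),\phi_2^{|x|}(y))$, the next step would be to iterate condition (\textbf{P1}) to obtain $\Delta\circ \phi_1^n = (\id \otimes \phi_1^n)\circ \Delta$ and $\Delta\circ \phi_2^n = (\phi_2^n \otimes \id)\circ \Delta$. Applying $(\id\otimes \varepsilon)$ and $(\varepsilon\otimes \id)$ respectively then yields the ``winding-type'' identities
$$\phi_1^n(h) = \sum h_1\,\varepsilon(\phi_1^n(h_2)), \qquad \phi_2^n(h) = \sum \varepsilon(\phi_2^n(h_1))\, h_2.$$
Substituting these into $\theta(\phi_1^{|y|}(x),\phi_2^{|x|}(y))$ and pulling the resulting scalars outside of $\theta$ by bilinearity produces exactly the simplified form of $\theta^\sigma(x,y)$ above. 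The calculation is routine; the only subtle point is the consistent bookkeeping of Sweedler indices and homogeneous degrees, which is precisely what (\textbf{T2}) controls. I do not expect any deep obstacle beyond this bookkeeping.
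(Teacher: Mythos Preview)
Your proposal is correct and follows essentially the same route as the paper: invoke \Cref{twist-and-2-cocycle} for the explicit $\sigma$ and $\sigma^{-1}$, plug into the formula of \Cref{thm1s3}, collapse via the counit axiom (using (\textbf{T2}) to keep degrees constant across Sweedler legs), and then use (\textbf{P1}) to recognise the winding identities $\phi_1^n(h)=\sum h_1\varepsilon(\phi_1^n(h_2))$ and $\phi_2^n(h)=\sum \varepsilon(\phi_2^n(h_1))h_2$. The paper carries this out as a single displayed chain of equalities rather than isolating the winding identities first, but the content is identical.
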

\begin{proof}
By our main result \Cref{twist-and-2-cocycle},  $\sigma$ is a well-defined 2-cocycle on $H$ with convolution inverse $\sigma^{-1}(x, y)=\varepsilon(x)\varepsilon(\phi_1^{|x|}(y))$ for all homogeneous elements $x,y\in H$. Thus, by \Cref{thm1s3}, we get 
\begin{align*}
  \theta^\sigma(v, w)&=\sum \sigma(v_1, w_1)\theta(v_2, w_2)\sigma^{-1}(w_3, v_3)\\
  &=\sum \varepsilon(v_1)\varepsilon(\phi_2^{|v_1|}(w_1))\theta(v_2, w_2)\varepsilon(w_3)\varepsilon(\phi_1^{|w_3|}(v_3))\\
  &=\sum \varepsilon(\phi_2^{|v|}(w_1))\theta(v_1, w_2)\varepsilon(\phi_1^{|w|}(v_2))\\
  &=\theta\left(\sum v_1\varepsilon(\phi_1^{|w|}(v_2)),\, \sum  \varepsilon(\phi_2^{|v|}(w_1))w_2\right)\\
    \overset{(\textbf{P1})}&=\theta\left(\sum \phi_1^{|w|}(v)_1\varepsilon(\phi_1^{|w|}(v)_2),\, \sum  \varepsilon(\phi_2^{|v|}(w)_1)\phi_2^{|v|}(w)_2\right)\\
  &=\theta(\phi_1^{|w|}(v), \phi_2^{|v|}(w)). \qedhere
\end{align*}
\end{proof}

Let $V$ be an $n$-dimensional vector space for some positive integer $n\ge 2$, and let $R\in {\rm End}_\kk(V^{\otimes 2})$ be a solution to the quantum Yang-Baxter equation. Recall the FRT construction $A(R)$ by \eqref{eq:AR} and \eqref{eq:ARBi}. 

\begin{defn}
Retain the notations above. For any 2-cocycle $\sigma$ on $A(R)$, we define the twisted $R^\sigma\in {\rm End}_\kk(V^{\otimes 2})$ as 
\[
(R^\sigma)^{kl}_{ij}:=\sum_{1\leq u,v,p,q\leq q} \sigma(t^k_u,t^l_v)R^{uv}_{pq}\sigma^{-1}(t^q_j,t^p_i), \qquad \text{for all } 1\leq i,j,k,l \leq n.\] 
\end{defn}

The following result is straightforward.

\begin{lemma}
Let $\sigma$ be a 2-cocycle twist on $A(R)$. Then we have 
\[
A(R)^\sigma\cong A(R^\sigma)
\]
as bialgebras. As a consequence, $R^\sigma\in {\rm End}_\kk(V^{\otimes 2})$ is another solution to the quantum Yang-Baxter equation.
\end{lemma}

\begin{Cor}
\label{cor:TwistR}
Retain the notations above. Suppose the 2-cocycle $\sigma$ on $A(R)$ is given by some twisting pair $(\phi_1, \phi_2)$ as in  \Cref{Af-twisting-pair}. Then we have  
\[\left(R^{\sigma}\right)^{kl}_{ij}=\sum_{1\leq p, v\leq n}\alpha^p_iR^{kv}_{pj}\beta^l_v, \quad \text{ for all } 1\leq i,j,k,l\leq n, \]
is another solution to the quantum Yang-Baxter equation in ${\rm End}_\kk(V^{\otimes 2})$.
\end{Cor}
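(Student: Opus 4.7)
The plan is to prove Corollary~\ref{cor:TwistR} by direct substitution into the definition of $(R^\sigma)^{kl}_{ij}$, once the cocycle $\sigma$ has been evaluated on the matrix‐coalgebra generators $t^j_i$. The first step is to observe that, by Proposition~\ref{cor:lifting}, the twisting pair $(\phi_1,\phi_2)$ of $A(R)$ described in Lemma~\ref{Af-twisting-pair} lifts uniquely to a twisting pair of the Hopf envelope $A(R)[g^{-1}]$; I continue to denote the lifts by $\phi_1,\phi_2$ since on the generators $\{t^j_i\}$ their values are unchanged. Applying Theorem~\ref{thm:Zhang and 2-cocycle twists} (equivalently, Proposition~\ref{twist-and-2-cocycle}(2)) then gives the explicit formulas
\[
\sigma(x,y)=\varepsilon(x)\,\varepsilon\bigl(\phi_2^{|x|}(y)\bigr),\qquad
\sigma^{-1}(x,y)=\varepsilon(x)\,\varepsilon\bigl(\phi_1^{|x|}(y)\bigr),
\]
valid on homogeneous elements.

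The second step is to specialise these formulas to the generators. Each $t^j_i$ sits in degree one with $\varepsilon(t^j_i)=\delta^j_i$ by \eqref{eq:ARBi}. Combined with $\phi_1(t^p_i)=\sum_u \alpha^u_i t^p_u$ and $\phi_2(t^l_v)=\sum_w \beta^l_w t^w_v$ from Lemma~\ref{Af-twisting-pair}, a one–line computation yields
\[
\sigma(t^k_u,t^l_v)=\varepsilon(t^k_u)\,\varepsilon\bigl(\phi_2(t^l_v)\bigr)=\delta^k_u\,\beta^l_v,\qquad
\sigma^{-1}(t^q_j,t^p_i)=\varepsilon(t^q_j)\,\varepsilon\bigl(\phi_1(t^p_i)\bigr)=\delta^q_j\,\alpha^p_i.
\]

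The final step is to plug these evaluations into
\[
(R^\sigma)^{kl}_{ij}=\sum_{1\le u,v,p,q\le n} \sigma(t^k_u,t^l_v)\,R^{uv}_{pq}\,\sigma^{-1}(t^q_j,t^p_i),
\]
and collapse the sums over $u$ and $q$ via $\delta^k_u$ and $\delta^q_j$, which leaves exactly $\sum_{p,v}\alpha^p_i R^{kv}_{pj}\beta^l_v$, as asserted. That $R^\sigma$ is again a solution of the QYBE is then automatic from the lemma preceding the corollary (via the identification $(A(R)[g^{-1}])^\sigma\cong A(R^\sigma)[(g^\sigma)^{-1}]$), so nothing further needs to be checked. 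There is no serious obstacle in this argument: once the general cocycle formula from Proposition~\ref{twist-and-2-cocycle} is available and the action of $\phi_1,\phi_2$ on the degree-one generators is known explicitly, the proof reduces to bookkeeping with Kronecker deltas.
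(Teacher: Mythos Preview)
Your proposal is correct and follows essentially the same approach as the paper: both substitute the formulas $\sigma(x,y)=\varepsilon(x)\varepsilon(\phi_2^{|x|}(y))$ and $\sigma^{-1}(x,y)=\varepsilon(x)\varepsilon(\phi_1^{|x|}(y))$ on the degree-one generators, evaluate $\varepsilon(\phi_2(t^l_v))=\beta^l_v$ and $\varepsilon(\phi_1(t^p_i))=\alpha^p_i$, and then collapse the Kronecker deltas in the defining sum for $(R^\sigma)^{kl}_{ij}$. Your version is slightly more explicit in invoking the lift to the Hopf envelope and the preceding lemma for the QYBE conclusion, but the computation is identical.
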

\begin{proof}
It is a straightforward computation:
\begin{align*}
(R^\sigma)^{kl}_{ij}&=\sum_{1\leq u,v,p,q\leq n} \sigma(t^k_u,t^l_v)R^{uv}_{pq}\sigma^{-1}(t^q_j,t^p_i) \\
&= \sum_{1\leq u,v,p,q\leq n}\varepsilon(t^k_u)\varepsilon(\phi_2(t^l_v))R^{uv}_{pq} \varepsilon(t^q_j)\varepsilon(\phi_1(t^p_i))\\
&=\sum_{1\leq v,p\leq n} \varepsilon(\phi_1(t^p_i))R^{kv}_{pj}\varepsilon(\phi_2(t^l_v)) =\sum_{1\leq p, v\leq n}\alpha^p_iR^{kv}_{pj}\beta^l_v. \qedhere
\end{align*} 
\end{proof}

Finally we apply our previous results to find all twisting solutions to the quantum Yang-Baxter equation from the classical Yang-Baxter operator $R_q$.   

\begin{proposition}
\label{prop:twistR}
Let $R_q\in \textnormal{End}_{\kk}(V^{\otimes 2})$ be the classical Yang-Baxter operator as in \eqref{eq:YBO}. Let $(\phi_1,\phi_2)$ be a twisting pair of $A(R_q)$ as given in  \Cref{Af-twisting-pair}. Denote by $\sigma$  the 2-cocycle corresponding to the right Zhang twist of $\phi_1\circ \phi_2$. Then the new solution $(R_q)^{\sigma}$ to the QYBE in \Cref{cor:TwistR} is described as follows:
\begin{enumerate}
    \item If $q=1$, then $\left((R_q)^{\sigma}\right)^{kl}_{ij}=\alpha^k_i\beta^l_j$.
    \item If $q=-1$, then $ \left((R_q)^{\sigma}\right)^{kl}_{ij}=(-1)^{\delta_j^k}\,\delta^k_{\tau^{-1}(i)}\delta^l_{\tau(j)}\alpha^k_i(\alpha_l^j)^{-1}$, for some permutation $\tau\in S_n$ such that $\alpha^{i}_j\neq 0$ whenever $j=\tau(i)$.
    \item If $q\ne \pm1$, then $ \left((R_q)^{\sigma}\right)^{kl}_{ij}=\left(R_q\right)_{ij}^{kl}\alpha_i^i\,(\alpha_l^l)^{-1}$. 
\end{enumerate}
\end{proposition}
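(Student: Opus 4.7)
The plan is to apply Corollary \ref{cor:TwistR} directly. That corollary gives
\[
\left((R_q)^\sigma\right)^{kl}_{ij} = \sum_{1\leq p, v \leq n} \alpha^p_i\, (R_q)^{kv}_{pj}\, \beta^l_v,
\]
so I would first read off the matrix entries $(R_q)^{kv}_{pj}$ from \eqref{eq:YBO}: they vanish unless $\{k,v\}=\{p,j\}$, taking the values $q\,\delta^k_j\delta^v_j$ when $p=j$, $\delta^k_p\delta^v_j$ when $p<j$, and $\delta^k_p\delta^v_j + (q-q^{-1})\delta^k_j\delta^v_p$ when $p>j$. Substituting these into the sum above collapses it to the compact identity
\[
\left((R_q)^\sigma\right)^{kl}_{ij} = q^{\delta^k_j}\, \alpha^k_i\, \beta^l_j \;+\; (q - q^{-1})\,\delta^k_j \sum_{p > j} \alpha^p_i\, \beta^l_p,
\]
which will be the workhorse of the rest of the argument.

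I would then branch on the three cases of Proposition \ref{quantizedGL-twisting-pair}. For $q=1$, both $q^{\delta^k_j}$ and $q-q^{-1}$ trivialize and the right-hand side reduces to $\alpha^k_i\beta^l_j$, giving (1) with no constraint on $(\alpha^i_j)$. For $q=-1$, the $(q-q^{-1})$-term again vanishes but now $q^{\delta^k_j} = (-1)^{\delta^k_j}$, and $(\alpha^i_j)$ is a generalized permutation matrix indexed by some $\tau\in S_n$; writing $\alpha^k_i = \alpha^{\tau^{-1}(i)}_i\,\delta^k_{\tau^{-1}(i)}$ and inverting to obtain $\beta^l_j = (\alpha^j_{\tau(j)})^{-1}\,\delta^l_{\tau(j)}$ yields (2). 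For $q \neq \pm 1$, the matrix $(\alpha^i_j)$ is diagonal, so $\alpha^p_i = \alpha^i_i\,\delta^p_i$ and $\beta^l_v = (\alpha^l_l)^{-1}\,\delta^l_v$: the sum over $p>j$ then forces $p=i$ and $v=l=i$, contributing only when $i>j$ and producing exactly the $(q-q^{-1})\delta^k_j\delta^l_i$ correction in the $i>j$ subcase of (3), while the first summand gives $q^{\delta^k_j}\delta^k_i\delta^l_j\,\alpha^i_i(\alpha^l_l)^{-1}$, which accounts for the main diagonal terms in all three subcases of (3), with the factor $q^{\delta^k_j}$ supplying the extra $q$ exactly on $i=j$.

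The computation is routine; the most error-prone step is the $q=-1$ case, where the inversion of a generalized permutation matrix must be tracked carefully to distinguish the factor $\alpha^{\tau^{-1}(i)}_i$ from $(\alpha^j_{\tau(j)})^{-1}$ and to line up the Kronecker pattern $\delta^k_{\tau^{-1}(i)}\delta^l_{\tau(j)}$ in the stated formula. I would therefore double-check this step on a small test permutation by verifying $\alpha\beta = I$ before asserting the final identity.
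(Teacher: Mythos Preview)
Your proposal is correct and follows essentially the same approach as the paper: both start from Corollary~\ref{cor:TwistR}, read off the entries of $R_q$, and then split into the three cases of Proposition~\ref{quantizedGL-twisting-pair}. The only difference is cosmetic: you first condense the double sum into the uniform identity $q^{\delta^k_j}\alpha^k_i\beta^l_j + (q-q^{-1})\delta^k_j\sum_{p>j}\alpha^p_i\beta^l_p$ and then specialize, whereas the paper evaluates each case directly from the sum $\sum_{p,v}\alpha^p_i (R_q)^{kv}_{pj}\beta^l_v$ without an intermediate closed form.
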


\begin{proof}
Write the $R_q$-matrix as follows
\begin{equation*}
\left(R_q\right)_{ij}^{kl}=\left\{
    \begin{aligned}
&\delta^{i}_{k}\delta^{j}_{l}, & \textnormal{if}\quad  i<j\\
&\delta^{i}_{k}\delta^{j}_{l}\,q,
& \textnormal{if}\quad  i=j\\
&\delta^{i}_{k}\delta^{j}_{l}+\delta^{i}_{l}\delta_{j}^{k}(q-q^{-1}), 
& \quad \textnormal{if}\quad i>j.
\end{aligned}
\right.
\end{equation*}
Recall that $\phi_1(t_i^j)=\sum_{1\leq u \leq n}\alpha_i^ut_u^j$ and $ \phi_2(t_i^j)=\sum_{1\leq u \leq n}\beta_{u}^jt^u_i$ where $(\alpha^u_i)$ and $(\beta^j_u)$ are $n\times n$ matrices inverses of each other. We apply Corollary \ref{cor:TwistR} in the following three cases.

{\bf Case 1}:
If $q=1$, then
we have
\[
\left((R_q)^{\sigma}\right)^{kl}_{ij}=\sum_{1\leq p, v\leq n}\alpha^p_iR^{kv}_{pj}\beta^l_v=\alpha^k_iR^{kj}_{kj}\beta^l_j=\alpha^k_i\beta^l_j.
\]

{\bf Case 2}: 
If $q=-1$, then we have
\[
\left((R_q)^{\sigma}\right)^{kl}_{ij}=\sum_{1\leq p, v\leq n}\alpha^p_iR^{kv}_{pj}\beta^l_v=\alpha^k_iR^{kj}_{kj}\beta^l_j=(-1)^{\delta_j^k}\,\alpha^k_i\beta^l_j. 
\]
Since $(\alpha^i_j)$ is a generalized permutation matrix, there is some $\tau\in S_n$ such that $\alpha^i_j\neq 0$ if $j=\tau(i)$ and $\alpha^i_j=0$ if $j\neq \tau(i)$. Then $\beta^i_j=(a^j_i)^{-1}$ if $i=\tau(j)$ and $\beta^i_j=0$ if $i\neq \tau(j)$. Hence, 
\[
\left((R_q)^{\sigma}\right)^{kl}_{ij}=(-1)^{\delta_j^k}\,\delta^k_{\tau^{-1}(i)}\delta^l_{\tau(j)}\alpha^k_i(\alpha_l^j)^{-1}.
\]

{\bf Case 3}:
If $q\ne \pm 1$, then  $(\alpha_{i}^{j})$ and $(\beta^j_i)$ are invertible diagonal matrices. Then we have  
\[
\left((R_q)^{\sigma}\right)^{kl}_{ij}=\sum_{1\leq p, v\leq n}\alpha^p_iR^{kv}_{pj}\beta^l_v=\alpha_i^iR^{kl}_{ij}\beta_l^l=\left\{\begin{aligned}
&\delta^{i}_{k}\delta^{j}_{l}\,\alpha_i^i\,(\alpha_l^l)^{-1}, & \textnormal{if}\quad  i<j\\
&\delta^{i}_{k}\delta^{j}_{l}\,\alpha_i^i\,(\alpha_l^l)^{-1}\,q,
& \textnormal{if}\quad  i=j\\
&\left(\delta^{i}_{k}\delta^{j}_{l}+\delta^{i}_{l}\delta_{j}^{k}(q-q^{-1})\right)\alpha_i^i\,(\alpha_l^l)^{-1}, 
& \quad \textnormal{if}\quad i>j. 
\end{aligned}
\right.  
\]
\end{proof}
\section*{Conflict of interest}

The authors declare that they have no conflict of interest.

\bibliography{bibl}
\bibliographystyle{plain}
\end{document}